\def\RR{\mathbb{R}}
\def\CC{\mathbb{C}}
\def\NN{\mathbb{N}}
\newcommand{\al}{{\alpha}}
\newcommand{\f}{{\varphi}}
\newcommand{\cV}{{\mathcal{V}}}
\newcommand{\cX}{{\mathcal{X}}}
\newcommand{\R}{{\mathbb  R}}
\newcommand{\N}{{\mathbb  N}}
\newcommand{\fdot}{\,\cdot\,}
\def\Ddots{\mathinner{\mkern1mu\raise\p@
\vbox{\kern7\p@\hbox{.}}\mkern2mu
\raise4\p@\hbox{.}\mkern2mu\raise7\p@\hbox{.}\mkern1mu}}
\newcommand{\cH}{\mathcal{H}}
\newcommand{\cC}{\mathcal{C}}
\newcommand{\cB}{\mathcal{B}}
\newcommand{\cF}{\mathcal{F}}
\newcommand{\cA}{\mathcal{A}}
\newcommand{\cD}{\mathcal{D}}
\DeclareMathOperator{\spa}{span}
\newcommand{\ci}[1]{_{ {}_{\scriptstyle #1}}}
\newcommand{\ti}[1]{_{\scriptstyle \text{\rm #1}}}
\chardef\mathlig@atcode\count255
\def\actively#1#2{\begingroup\uccode`\~=`#2\relax\uppercase{\endgroup#1~}}
\def\mathlig@gobble{\afterassignment\mathlig@next@cmd\let\mathlig@next= }
\def\mathlig@delim{\mathlig@delim}
\def\mathlig@defcs#1{\expandafter\def\csname#1\endcsname}
\def\mathlig@let@cs#1#2{\expandafter\let\expandafter#1\csname#2\endcsname}
\def\mathlig@appendcs#1#2{\expandafter\edef\csname#1\endcsname{\csname#1\endcsname#2}}
\def\mathlig#1#2{\mathlig@checklig#1\mathlig@end\mathlig@defcs{mathlig@back@#1}{#2}\ignorespaces}
\def\mathlig@checklig#1#2\mathlig@end{%
 \expandafter\ifx\csname mathlig@forw@#1\endcsname\relax
 \expandafter\mathchardef\csname mathlig@back@#1\endcsname=\mathcode`#1%
 \mathcode`#1"8000\actively\def#1{\csname mathlig@look@#1\endcsname}%
 \mathlig@dolig#1\mathlig@delim
\fi
\mathlig@checksuffix#1#2\mathlig@end
}
\def\mathlig@checksuffix#1#2\mathlig@end{%
\ifx\mathlig@delim#2\mathlig@delim\relax\else\mathlig@checksuffix@{#1}#2\mathlig@end\fi
}
\def\mathlig@checksuffix@#1#2#3\mathlig@end{%
\expandafter\ifx\csname mathlig@forw@#1#2\endcsname\relax\mathlig@dosuffix{#1}{#2}\fi
\mathlig@checksuffix{#1#2}#3\mathlig@end
}
\def\mathlig@dosuffix#1#2{%
\mathlig@appendcs{mathlig@toks@#1}{#2}%
\mathlig@dolig{#1}{#2}\mathlig@delim
}
\def\mathlig@dolig#1#2\mathlig@delim{%
 \mathlig@defcs{mathlig@look@#1#2}{%
 \mathlig@let@cs\mathlig@next{mathlig@forw@#1#2}\futurelet\mathlig@next@tok\mathlig@next}%
 \mathlig@defcs{mathlig@forw@#1#2}{%
  \mathlig@let@cs\mathlig@next{mathlig@back@#1#2}%
  \mathlig@let@cs\checker{mathlig@chck@#1#2}%
  \mathlig@let@cs\mathligtoks{mathlig@toks@#1#2}%
  \expandafter\ifx\expandafter\mathlig@delim\mathligtoks\mathlig@delim\relax\else
  \expandafter\checker\mathligtoks\mathlig@delim\fi
  \mathlig@next
 }%
 \mathlig@defcs{mathlig@toks@#1#2}{}%
 \mathlig@defcs{mathlig@chck@#1#2}##1##2\mathlig@delim{%
  \ifx\mathlig@next@tok##1%
   \mathlig@let@cs\mathlig@next@cmd{mathlig@look@#1#2##1}\let\mathlig@next\mathlig@gobble
  \fi
  \ifx\mathlig@delim##2\mathlig@delim\relax\else
   \csname mathlig@chck@#1#2\endcsname##2\mathlig@delim
  \fi
 }%
%
 \ifx\mathlig@delim#2\mathlig@delim\else
  \mathlig@defcs{mathlig@back@#1#2}{\csname mathlig@back@#1\endcsname #2}%
 \fi
}%
\mathchardef\ordinarycolon\mathcode`\:
\def\vcentcolon{\mathrel{\mathop\ordinarycolon}}
\numberwithin{equation}{section}
\theoremstyle{plain}
\newtheorem{theo}{Theorem}[section]
\newtheorem{cor}[theo]{Corollary}
\newtheorem{lem}[theo]{Lemma}
\newtheorem{prop}[theo]{Proposition}
\newtheorem{conj}[theo]{Conjecture}
\theoremstyle{definition}
\newtheorem{defn}[theo]{Definition}
\newtheorem*{theorem*}{Theorem}
\theoremstyle{remark}
\newtheorem*{ex*}{Example}
\theoremstyle{remark}
\newtheorem*{exs*}{Examples}
\theoremstyle{remark}
\newtheorem*{rems*}{Remarks}
\theoremstyle{remark}
\newtheorem*{rem*}{Remark}
\newtheorem{rem}[theo]{Remark}
\title[Boundary Conditions for Left-Definite Spaces]{Boundary Conditions associated with the General Left-Definite Theory for Differential Operators}
\author{Matthew~Fleeman}
\address{Department of Mathematics, Baylor University, One Bear Place \#97328,     
 Waco, TX  76798, USA}
\email{Matthew$\underline{\,\,\,}$Fleeman@baylor.edu}
\author{Dale~Frymark}
\address{Department of Mathematics, Baylor University, One Bear Place \#97328,     
 Waco, TX  76798, USA}
\email{Dale$\underline{\,\,\,}$Frymark@baylor.edu}
\author{Constanze~Liaw}
\address{Department of Mathematical Sciences, University of Delware, 501 Ewing Hall, Newark, DE  19716, USA}
\thanks{
The work of Constanze Liaw was supported by Simons Foundation Grant \#426258.}
\keywords{Orthogonal Polynomials, Left-Definite Theory, Differential Operator, Glazman--Krein--Naimark Theory, Boundary Conditions}
 \subjclass[2010]{47E05, 47B25, 47B65, 34L10}
\begin{document}

\begin{abstract}
In the early 2000's, Littlejohn and Wellman developed a general left-definite theory for certain self-adjoint operators by fully determining their domains and spectral properties. The description of these domains do not feature explicit boundary conditions. We present characterizations of these domains given by the left-definite theory for all operators which possess a complete system of orthogonal eigenfunctions, in terms of classical boundary conditions.
\end{abstract}

\maketitle

\setcounter{tocdepth}{1}
\tableofcontents

\section{Introduction}\label{s-into}

A wide variety of literature concerns the study of left-definite theory applied to Sturm--Liouville differential operators. The interest arises primarily from the groundbreaking paper of Littlejohn and Wellman \cite{LW02}. The paper describes the creation of a continuum of left-definite spaces and left-definite operators associated with an arbitrary self-adjoint operator that is bounded below by a positive constant in a Hilbert space. Prior to \cite{LW02}, research had been conducted only in the ``first" left-definite setting. This theory has been applied to many types of self-adjoint differential operators, including those stemming from the second-order differential equations of Hermite, Legendre, Jacobi, Laguerre, and Fourier. Excellent surveys of these results are \cite{BLTW} and \cite{LW13}.

The paper of Littlejohn and Wellman \cite{LW02} managed to characterize these left-definite spaces in terms of other Hilbert spaces defined with integral operators. A key point of left-definite theory is that each left-definite space will be nested and dense within the original Hilbert space. However, some critics felt somewhat uneasy with the fact that the left-definite spaces (in their opinions) \emph{lack} an explicit mention of boundary conditions.

Here, we present classical boundary conditions that exist for self-adjoint differential operators which possess a complete system of orthogonal eigenfunctions. These boundary conditions are formulated in terms of Glazman--Krein--Naimark (from now on abbreviated by GKN) theory, which entirely describes self-adjoint extensions for a closed, symmetric operator with equal deficiency indices. The formulation in terms of GKN conditions can be seen as a vast improvement over the previous description of these spaces, as verifying against a few functions is easier than proving certain boundedness properties. Progress has also been made towards showing different types of boundary conditions. In particular, the classical Sturm--Liouville differential operators are defined on $L^2[(a,b),w]$ spaces. The aforementioned GKN conditions are shown to be equivalent to the vanishing of certain limits of functions as they approach the endpoints $a$ and $b$. This provides an alternative form of testing whether an arbitrary function from the original Hilbert space also belongs to a certain left-definite space.

This framework allows for the construction of the boundary conditions for the left-definite spaces of the classical Legendre differential operator explicitly. Previous work using GKN conditions to describe the left-definite domains is limited to very recent progress by Littlejohn and Wicks \cite{LW15, LWOG}. The results concern the classical Legendre differential operator exclusively and give GKN conditions describing the fourth left-definite domain, which is associated with the square of the differential operator, ${\bf L}^2$. Additionally, Littlejohn and Wicks formulate their results in terms of ``separated" boundary conditions, whereas ``coupled" boundary conditions are used throughout this paper. This is a matter of preference, but using coupled boundary conditions simplifies calculations considerably, as they are easier to access via the sesquilinear form dealt with by GKN theory.

In this work we introduce a systematic approach, which reduces the amount of cumbersome computations in this field. This perspective enables us to harvest the finite dimensional nature of defect spaces. An alternative approach to this problem is through Sturm--Liouville theory, e.g.~\cite{FW, KWZ}, but the literature focuses on the first left-definite theory and does not produce GKN conditions.

The interest in differential operators which possess a complete system of orthogonal eigenfunctions originates with a result in \cite{LW02} that says this same system will be present in each of the different left-definite domains. Hence, there is an indicator for when a self-adjoint extension is a left-definite domain, and this simplifies the process. A second-order linear differential equation satisfied by a complete orthogonal system of polynomials with absolutely continuous measures of orthogonality has a second linearly independent solution \cite[Section 3.6]{I}. This second linearly independent solution is often called a function of the second kind, and their existence plays an essential role in our examples.

The Legendre differential operator example is particularly important because there are essentially only four Sturm--Liouville operators with a complete set of orthogonal eigenfunctions. The Bochner classification \cite{Bochner} tells us that, up to a complex linear change of variable, the only such operators with polynomial eigenfunctions are Jacobi, Hermite, Laguerre and Bessel. Of these, the Jacobi differential expressions require the most boundary conditions. The Legendre expression is a special case of Jacobi that has an immense amount of literature, so it was ideal for such an exploration of GKN conditions with respect to left-definite theory. The framework of Section \ref{s-legendre} both extends to other Jacobi differential expressions (other values of the parameters $\al$ and $\beta$), and reduces to cover the cases of Hermite, Laguerre and Bessel. The broader concepts of Section \ref{s-conj} are expressed with this in mind as well.

This general approach suggests other open problems: What adjustments are necessary when the self-adjoint operator has a spectrum that is not discrete? That is, given some non-standard extension of a differential operator, can we describe the corresponding left-definite domain? How about the left-definite domain for compositions of the operator?

\subsection{Outline}
Section \ref{s-background} deals primarily with two different areas: left-definite theory and self-adjoint extension theory. The left-definite theory mainly follows the classical results contained in \cite{LW02}, and includes the structure of left-definite spaces as well as key facts about their spectra. The self-adjoint extension theory follows the classical text of Naimark \cite{N}, and culminates in GKN theory. This theory is the foundation for the discussion of boundary conditions of self-adjoint operators. Subsection \ref{s-graph} describes the graph norm that can be endowed on a Hilbert space and justifies intuition about the decomposition of the maximal domain.

In Section \ref{s-framework} we construct a systematic framework around the method of finding GKN conditions that make differential expressions into self-adjoint differential operators. We show that eigenfunctions of a self-adjoint operator, which are linearly independent modulo the minimal domain, yield GKN conditions for the operator.

In Section \ref{s-legendre}, explicit self-adjoint extensions are given by showing that eigenfunctions themselves lead to suitable GKN conditions for powers ${\bf L}^n$ of the classical Legendre operator ${\bf L}$. The case $n=2$ was the topic of \cite{LW15}. One of the key features of our approach is that it utilizes the functions of the second kind. In examples, we prove that the first $n$ eigenfunctions work for ${\bf L}^n$ for $n=2, \hdots, 5$. The statement has been verified numerically for $n\le16$.
We show a necessary condition for eigenfunctions and functions of the second kind to be suitable ``test" functions for linear independence modulo the minimal domain. The sufficiency of this condition is discussed and conjectured.

Motivated by the Legendre example, it is shown in Section \ref{s-results} that for left-definite operators with pure point spectrum (only eigenvalues) there exist eigenfunctions (corresponding to some eigenvalues) that generate GKN conditions.
The method of proof for this result differs from the explicit computations that were used in the Legendre example. It relies on working with the graph norm. We believe this to be the first general result in this direction. The idea is to reduce the problem to its essence: finite dimensional linear algebra. This is accomplished by using the fact that the defect spaces and minimal domain are orthogonal with respect to graph norm, and properties of the eigenfunctions.

In Section \ref{s-conj} the goal is to determine which boundary conditions describe left-definite domains. This question is then explored by comparing the left-definite domains for these differential operators with the complete system of orthogonal polynomials and their GKN conditions to boundary conditions that stem from the definition of the Sturm--Liouville operator. The work improves and simplifies a proof of a fact from \cite{LWOG}. The central conjecture stating the equivalence of the following four sets is still partially open:
\begin{itemize}
\item The $n${th} left definite domain.
\item The maximal domain with GKN conditions determined by the first $m$ orthogonal polynomials. (Here, $m$ denotes the deficiency indices.)
\item The maximal domain with GKN conditions determined by any $m$ orthogonal polynomials.
\item The maximal domain with certain explicit boundary conditions.
\end{itemize}

Other ramifications and specific discussions of individual systems of orthogonal polynomials and their differential equations follows.

\subsection{Notation}\label{ss-notation}
We use $\ell$ to denote differential expressions (on a separable Hilbert space $\cH$). We mostly work with general Sturm--Liouville expressions in symmetric form. In the example we focus on the Legendre expression. Sets and spaces are generally denoted with ``mathcal"; the Hilbert space $\cH$, the minimal domain $\cD\ti{min}$, the defect spaces $\cD_+$ and $\cD_-$, etc.

Further, we use the notation $\{\ell, \cX\}$ to refer to an operator associated to $\ell$ with domain $\cX$. Since we work with unbounded operators, the operators are defined on dense subspaces $\cX\subsetneq \cH$. The maximal domain is denoted by $\cD\ti{max}$ (the largest subset of $\cH$ with $\ell (\cD\ti{max})\subset \cH$). Sometimes, $\cD\ti{max}(\ell)$ is used to emphasize the expression. Boldface letters are used for operators and matrices. We abbreviate the maximal operator by ${\bf L}\ti{max}$, i.e.~${\bf L}\ti{max}=\{\ell, \cD\ti{max}\}$. In analogy, ${\bf L}\ti{min}=\{\ell, \cD\ti{min}\}$ is the minimal operator, and in this context ${\bf L}=\{\ell, \cD_{\bf L}\}$ is used to denote self-adjoint operators.

When we consider a general operator ${\bf  A}$, we refer to its domain by $\cD({\bf  A})$.

Often, we work with powers ${\bf L}^n$ of operators, e.g.~we consider the left-definite operator induced by the expression $\ell^n$. Abusing notation, we use ${\bf L}\ti{max}^n = \{\ell^n, \cD\ti{max}^n\}$ where $\cD\ti{max}^n:=\cD\ti{max}(\ell^n)$. Further, we let $[\fdot,\fdot]$ denote a general sesquilinear form, and $[\fdot,\fdot]_n$ stand for the sesquilinear form associated with $\ell^n$.

We generally have $(m,m)$ be the deficiency indices of ${\bf L}\ti{min}$. We note that the deficiency indices of ${\bf L}\ti{min}^n$ then amount to $(nm,nm)$.\\

\noindent{\bf Acknowledgement.} Many thanks to F.~Gesztesy for making useful suggestions.

\section{Background}\label{s-background}
Consider the classical Sturm--Liouville differential equation
\begin{align*}
\dfrac{d}{dx}\left[p(x)\dfrac{dy}{dx}\right]+q(x)y=-\lambda w(x)y,
\end{align*}
where $y$ is a function of the independent variable $x$, $p(x),w(x)>0$ a.e.~on $(a,b)$ and $q(x)$ real-valued a.e.~on $(a,b)$. 
Furthermore, $1/p(x),q(x),w(x)\in L^1\ti{loc}[(a,b),dx]$. Additional details about Sturm--Liouville theory can be found in \cite{GZ}.
This differential expression can be viewed as a linear operator, mapping a function $f$ to the function $\ell[f]$ via
\begin{align}\label{d-sturmop}
\ell[f](x):=-\dfrac{1}{w(x)}\left(\dfrac{d}{dx}\left[p(x)\dfrac{df}{dx}(x)\right]+q(x)f(x)\right).
\end{align}
This expression can be viewed as an unbounded operator acting on Hilbert space $L^2[(a,b),w]$, endowed with the inner product 
$
\langle f,g\rangle:=\int_a^b f(x)\overline{g(x)}w(x)dx.
$
In this setting, the eigenvalue problem $\ell[f](x)=\lambda f(x)$ can be considered. The operators of interest, $\{\ell,L^2[(a,b),w]\}$, are assumed to possess a set of orthogonal eigenfunctions that is complete in the domain. The expression $\ell[\fdot]$ defined in equation \eqref{d-sturmop} has been well-studied, see \cite{I} for an in-depth discussion of its relation to orthogonal polynomials. However, the operator $\{\ell,L^2[(a,b),w]\}$ is not self-adjoint a priori. Subsection \ref{s-gkn} details the imposition of boundary conditions to ensure self-adjointness.

Furthermore, the operator $\ell^n[\fdot]$ is defined as the operator $\ell[\fdot]$ composed with itself $n$ times, creating a differential operator of order $2n$. Every formally symmetric differential expression $\ell^n[\fdot]$ of order $2n$ with coefficients $a_k:(a,b)\to\RR$ and $a_k\in C^k(a,b)$ for $k=0,1,\dots,n$ and $n\in\NN$ has the {\em Lagrangian symmetric form} 

\begin{align}\label{e-lagrangian}
\ell^n[f](x)=\sum_{j=1}^n(-1)^j(a_j(x)f^{(j)}(x))^{(j)}, \text{ } x\in(a,b).
\end{align}
Further details can be found in \cite{DS, LWOG}.

The classical differential expressions of Jacobi, Hermite, and Laguerre all admit such a representation, and are semi-bounded. Semi-boundedness is defined as the existence of a constant $k\in\RR$ such that for all $x$ in the domain of the operator ${\bf A}$ the following inequality holds:
$$\langle {\bf A}x,x\rangle\geq k\langle x,x\rangle.$$
This additional property, combined with self-adjointness, allows for a continuum of nested Hilbert spaces to be defined within $L^2[(a,b),w]$ via the expressions $\ell^n[\fdot]$. Indeed, this continuum is a Hilbert scale, and many facts about the spectrum and the operators can be deduced using this point of view (e.g.~\cite{DHS, LW13}). More details about Hilbert scales can be found in \cite{AK, KP}. This particular Hilbert scale with self-adjoint operators that are semi-bounded is the topic in left-definite theory \cite{LW02}, part of which we explain in Subsection \ref{s-leftdef}.\\
\indent The combination of left-definite theory applied to differential expressions and self-adjoint extension theory allows for the necessary discussion of boundary conditions for these operators in later sections. Another necessary technicality is the use of the graph norm, in Subsection \ref{s-graph}. This subsection allows the maximal domain that the examined differential expression is defined on, to be decomposed into an orthogonal direct sum. This decomposition is essential to the results in Section \ref{s-results}.

\subsection{Left-Definite Theory}\label{s-leftdef}
Left-definite theory deals primarily with the spectral theory of Sturm--Liouville differential operators. The terminology itself can be traced back to Weyl in 1910 \cite{W}. A general framework for the left-definite theory of bounded-below, self-adjoint operators in a Hilbert space wasn't developed until 2002 in the landmark paper by Littlejohn and Wellman \cite{LW02}. Specifically, the left-definite theory allows one to generate a scale of operators (by composition), each of which possess the same spectrum as the original. 

Let $\cV$ be a vector space over $\CC$ with inner product $\langle\fdot,\fdot\rangle$ and norm $||\fdot||$. The resulting inner product space is denoted $(\cV,\langle\fdot,\fdot\rangle)$.

\begin{defn}[{\cite[Theorem 3.1]{LW02}}] \label{t-ldinpro}
Suppose ${\bf A}$ is a self-adjoint operator in the Hilbert space $\cH=(\cV,\langle\fdot,\fdot\rangle )$ that is bounded below by $kI$, where $k>0$. Let $r>0$. Define $\cH_r=(\cV_r, \langle\fdot,\fdot\rangle_r)$ with
$$\cV_r=\cD ({\bf A}^{r/2})$$
and
$$\langle x,y\rangle_r=\langle {\bf A}^{r/2}x,{\bf A}^{r/2}y\rangle \text{ for } (x,y\in \cV_r).$$
Then $\cH_r$ is said to be the $r$th {\em left-definite space} associated with the pair $(\cH,{\bf A})$.
\end{defn}

It was proved in \cite[Theorem 3.1]{LW02} that $\cH_r=(\cV_r, \langle\fdot,\fdot\rangle)$ is also described as the left-definite space associated with the pair $(\cH, {\bf A}^r)$, and we call $\cH_r$ the {\em r}th {\em left-definite space associated with the pair} $(\cH,{\bf A})$.
Specifically, we have:
\begin{enumerate}
\item $\cH_r$ is a Hilbert space,
\item $\cD ({\bf A}^r)$ is a subspace of $\cV_r$,
\item $\cD ({\bf A}^r)$ is dense in $\cH_r$,
\item $\langle x,x\rangle_r\geq k^r\langle x,x\rangle$ ($x\in \cV_r$), and
\item $\langle x,y\rangle_r=\langle {\bf A}^rx,y\rangle$ ($x\in\cD ({\bf A}^r)$, $y\in \cV_r$).
\end{enumerate}

The left-definite domains are defined as the domains of compositions of the self-adjoint operator ${\bf A}$, but the operator acting on this domain is slightly more difficult to define. 

\begin{defn}
Let $\cH=(\cV,\langle\fdot,\fdot\rangle)$ be a Hilbert space. Suppose ${\bf A}:\cD ({\bf A})\subset \cH\to \cH$ is a self-adjoint operator that is bounded below by $k>0$. Let $r>0$. If there exists a self-adjoint operator ${\bf A}_r:\cH_r\to \cH_r$ that is a restriction of ${\bf A}$ from the domain $\cD({\bf A})$ to $\cD({\bf A}^r)$,
we call such an operator an $r$th {\em left-definite operator associated with $(\cH,{\bf A})$}.
\end{defn}

The connection between the $r$th left-definite operator and the $r$th composition of the self-adjoint operator ${\bf A}$ is now made explicit. 

\begin{cor}[{\cite[Corollary 3.3]{LW02}}] \label{t-comppower}
Suppose ${\bf A}$ is a self-adjoint operator in the Hilbert space $\cH$ that is bounded below by $k>0$. For each $r>0$, let $\cH_r=(\cV_r, \langle\fdot,\fdot\rangle_r)$ and ${\bf A}_r$ denote, respectively, the $r$th left-definite space and the $r$th left definite operator associated with $(\cH,{\bf A})$. Then
\begin{enumerate}
\item $\cD ({\bf A}^r)=\cV_{2r}$, in particular, $\cD ({\bf A}^{1/2})=\cV_1$ and $\cD ({\bf A})=\cV_2$;
\item $\cD ({\bf A}_r)=\cD ({\bf A}^{(r+2)/2})$, in particular, $\cD ({\bf A}_1)=\cD ({\bf A}^{3/2})$ and $\cD ({\bf A}_2)=\cD ({\bf A}^2)$.
\end{enumerate}
\end{cor}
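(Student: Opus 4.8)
The plan is to handle the two parts separately: part (1) is essentially a bookkeeping step that unwinds the definition of $\cV_r$, while part (2) rests on a unitary equivalence supplied by the functional calculus of the positive self-adjoint operator ${\bf A}$.

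For part (1), I would simply invoke the defining relation $\cV_s=\cD({\bf A}^{s/2})$. Setting $s=2r$ gives $\cV_{2r}=\cD({\bf A}^{(2r)/2})=\cD({\bf A}^r)$ immediately, and the two displayed special cases are the instances $r=1/2$ and $r=1$. No spectral theory is needed here.

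For part (2), the key object is the map $U_r:={\bf A}^{r/2}$, which I claim is a unitary operator from $\cH_r$ onto $\cH$. It is defined on all of $\cH_r=\cV_r=\cD({\bf A}^{r/2})$, and it is isometric by the very definition of the left-definite inner product, since $\|x\|_r^2=\langle {\bf A}^{r/2}x,{\bf A}^{r/2}x\rangle=\|U_r x\|^2$. Because ${\bf A}\geq kI$ with $k>0$, the negative power ${\bf A}^{-r/2}$ is everywhere-defined and bounded (with norm at most $k^{-r/2}$), so $U_r$ is surjective, with $U_r^{-1}={\bf A}^{-r/2}$; an everywhere-defined isometric surjection between Hilbert spaces is unitary. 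This is the single place where the strict lower bound $k>0$ is indispensable, as it is precisely what keeps the negative powers bounded.

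With $U_r$ in hand I would conjugate ${\bf A}$: the operator ${\bf A}_r:=U_r^{-1}{\bf A}\,U_r$ acts, for $x\in\cD({\bf A}_r)$, as ${\bf A}^{-r/2}{\bf A}\,{\bf A}^{r/2}x={\bf A} x$ by the commutativity of the powers of ${\bf A}$ in the functional calculus; hence ${\bf A}_r$ is a genuine restriction of ${\bf A}$ (here one checks $\cD({\bf A}^{(r+2)/2})\subseteq\cD({\bf A})$, which holds since $(r+2)/2\geq1$) and is self-adjoint in $\cH_r$ because it is unitarily equivalent to the self-adjoint operator ${\bf A}$. It therefore realizes an $r$th left-definite operator, and $\cD({\bf A}_r)=U_r^{-1}\cD({\bf A})={\bf A}^{-r/2}\cD({\bf A})$. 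Finally I identify this set: $x\in{\bf A}^{-r/2}\cD({\bf A})$ iff ${\bf A}^{r/2}x\in\cD({\bf A})$ iff $x\in\cD({\bf A}^{1+r/2})=\cD({\bf A}^{(r+2)/2})$, which is exactly the claim; the cases $r=1,2$ follow by substitution. (Uniqueness of the left-definite operator, if desired, comes from the fact that a self-adjoint operator admits no proper self-adjoint restriction: $S\subseteq T$ with both self-adjoint forces $T=T^*\subseteq S^*=S$.)

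The main obstacle I expect is not conceptual but the careful domain bookkeeping when composing unbounded fractional powers: the identities ${\bf A}^{r/2}{\bf A}={\bf A}^{1+r/2}$ and ${\bf A}^{-r/2}{\bf A}^{r/2}{\bf A}={\bf A}$, the surjectivity of $U_r$, and the containment $\cD({\bf A}^{(r+2)/2})\subseteq\cD({\bf A})$ must all be verified through the spectral resolution of ${\bf A}$ rather than merely asserted. All of these are routine given that ${\bf A}$ is self-adjoint and bounded below by $k>0$, so the substance of the argument is exhibiting the unitary $U_r$ and checking that conjugation by it sends ${\bf A}$ back to ${\bf A}$.
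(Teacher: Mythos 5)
The paper does not prove this corollary at all: it is quoted verbatim from Littlejohn--Wellman \cite[Corollary 3.3]{LW02}, so there is no internal proof to compare against, and your argument has to stand on its own. It does. Part (1) is indeed immediate from the paper's Definition \ref{t-ldinpro}, which takes $\cV_r=\cD({\bf A}^{r/2})$ as the \emph{definition} of the left-definite space (in \cite{LW02} this identification is itself a theorem, proved from an abstract characterization, but within this paper's conventions your one-line argument is complete). For part (2), your construction via the unitary $U_r={\bf A}^{r/2}:\cH_r\to\cH$ and conjugation is sound, and all the functional-calculus bookkeeping you flag (boundedness of ${\bf A}^{-r/2}$, $\cD({\bf A}\,{\bf A}^{r/2})=\cD({\bf A}^{(r+2)/2})\subseteq\cD({\bf A})$, surjectivity of $U_r$) goes through exactly as you say. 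The only step that needs one more line is the uniqueness claim you placed in parentheses: it is not optional, since the statement concerns \emph{the} left-definite operator, and the inclusion argument $S\subseteq T\Rightarrow S=T$ only applies once you know an arbitrary self-adjoint restriction ${\bf B}$ of ${\bf A}$ acting in $\cH_r$ is comparable to your constructed ${\bf A}_r$. That comparability follows because your ${\bf A}_r$ is in fact the \emph{maximal} restriction of ${\bf A}$ to an operator in $\cH_r$: its domain $\cD({\bf A}^{(r+2)/2})$ coincides with $\{x\in\cV_r\cap\cD({\bf A}):{\bf A}x\in\cV_r\}$, so any such ${\bf B}$ satisfies ${\bf B}\subseteq{\bf A}_r$, and then your no-proper-self-adjoint-restriction argument finishes it. With that sentence added, the proof is complete and is, in spirit, the same route taken in \cite{LW02}.
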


The left-definite theory is particularly important for self-adjoint differential operators that are bounded below, as they are generally unbounded. The theory is trivial for bounded operators, as shown in {\cite[Theorem 3.4]{LW02}}.

Our applications of left-definite theory will be focused on differential operators which possess a complete orthogonal set of eigenfunctions in $\cH$. In {\cite[Theorem 3.6]{LW02}} it was proved that the point spectrum of ${\bf A}$ coincides with that of ${\bf A}_r$, and similarly for the continuous spectrum and for the resolvent set. 
It is possible to say more, a complete set of orthogonal eigenfunctions will persist throughout each space in the Hilbert scale.

\begin{theo}[{\cite[Theorem 3.7]{LW02}}] \label{t-leftdefortho}
If $\{\f_n\}_{n=0}^{\infty}$ is a complete orthogonal set of eigenfunctions of ${\bf A}$ in $\cH$, then for each $r>0$, $\{\f_n\}_{n=0}^{\infty}$ is a complete set of orthogonal eigenfunctions of the $r$th left-definite operator ${\bf A}_r$ in the $r$th left-definite space $\cH_r$.
\end{theo}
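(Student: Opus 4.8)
The plan is to reduce everything to the spectral decomposition of $\mathbf{A}$. Because $\{\f_n\}_{n=0}^\infty$ is a complete orthogonal set of eigenfunctions, the normalized vectors $e_n := \f_n/\|\f_n\|$ form an orthonormal basis of $\cH$ and $\mathbf{A}$ has pure point spectrum with $\mathbf{A}\f_n = \la_n \f_n$. Testing the lower bound against $\f_n$ gives $\la_n\|\f_n\|^2 = \langle \mathbf{A}\f_n,\f_n\rangle \ge k\|\f_n\|^2$, so $\la_n \ge k > 0$ for every $n$. The Borel functional calculus then yields, for every exponent $t>0$, that each single eigenvector lies in $\cD(\mathbf{A}^t)$ with $\mathbf{A}^t\f_n = \la_n^t \f_n$, where $\la_n^t$ is a genuine positive real number precisely because $\la_n \ge k > 0$. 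With this in hand I would verify the four required assertions in turn: membership in $\cV_r$, the eigenfunction property for $\mathbf{A}_r$, orthogonality in $\langle\fdot,\fdot\rangle_r$, and completeness in $\cH_r$.

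The first three follow immediately. Since $\f_n \in \cD(\mathbf{A}^{r/2}) = \cV_r$, each $\f_n$ belongs to the $r$th left-definite space. By Corollary \ref{t-comppower} we have $\cD(\mathbf{A}_r) = \cD(\mathbf{A}^{(r+2)/2})$, which again contains the single eigenvector $\f_n$; as $\mathbf{A}_r$ acts as a restriction of $\mathbf{A}$, we get $\mathbf{A}_r\f_n = \mathbf{A}\f_n = \la_n\f_n$, so $\f_n$ is an eigenfunction of $\mathbf{A}_r$ with the same eigenvalue $\la_n$. For orthogonality I would simply compute
\begin{align*}
\langle \f_n,\f_m\rangle_r = \langle \mathbf{A}^{r/2}\f_n, \mathbf{A}^{r/2}\f_m\rangle = \la_n^{r/2}\la_m^{r/2}\langle \f_n,\f_m\rangle,
\end{align*}
which vanishes for $n\ne m$ by the orthogonality of $\{\f_n\}$ in $\cH$.

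The substantive point is completeness of $\{\f_n\}$ in $\cH_r$, which I would establish through the orthogonal-complement criterion (valid since $\cH_r$ is a Hilbert space). Suppose $x \in \cV_r$ satisfies $\langle x, \f_n\rangle_r = 0$ for all $n$. Unwinding the definition of the inner product,
\begin{align*}
0 = \langle x,\f_n\rangle_r = \langle \mathbf{A}^{r/2}x, \mathbf{A}^{r/2}\f_n\rangle = \la_n^{r/2}\langle \mathbf{A}^{r/2}x, \f_n\rangle,
\end{align*}
and since $\la_n^{r/2} > 0$ this forces $\langle \mathbf{A}^{r/2}x, \f_n\rangle = 0$ for every $n$. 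Completeness of $\{\f_n\}$ in $\cH$ then gives $\mathbf{A}^{r/2}x = 0$, and because the lower bound $k>0$ makes $\mathbf{A}^{r/2}$ bounded below and hence injective, we conclude $x = 0$.

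The expected obstacle is precisely this completeness step, and the crucial hypothesis throughout is $k>0$: it enters twice, once to make the fractional powers $\la_n^{t}$ positive reals so that the functional calculus produces honest eigenvectors of $\mathbf{A}^{r/2}$, and once to guarantee injectivity of $\mathbf{A}^{r/2}$, without which the completeness argument would collapse. Everything else is bookkeeping with the spectral theorem and the identifications of domains supplied by Corollary \ref{t-comppower}.
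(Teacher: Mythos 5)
Your proof is correct. Note that the paper itself contains no proof of this statement: it is quoted verbatim as \cite[Theorem 3.7]{LW02}, so there is no in-paper argument to compare against, and your write-up has to stand (and does stand) on its own. Each step checks out: since $\la_n\ge k>0$, the spectral calculus gives $\f_n\in\cD({\bf A}^t)$ with ${\bf A}^t\f_n=\la_n^t\f_n$ for every $t>0$, which settles membership in $\cV_r$, the eigenvalue equation for ${\bf A}_r$ via Corollary \ref{t-comppower}, and $r$-orthogonality; and your completeness step correctly combines the orthogonal-complement criterion in the Hilbert space $\cH_r$ with injectivity of ${\bf A}^{r/2}$, which follows from the lower bound. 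One streamlining worth knowing, closer to the toolkit the paper actually lists: property (5) stated after Definition \ref{t-ldinpro} says $\langle x,y\rangle_r=\langle {\bf A}^rx,y\rangle$ for $x\in\cD({\bf A}^r)$ and $y\in\cV_r$. Taking $x=\f_n$ (which lies in $\cD({\bf A}^r)$ for every $r$) gives $\langle \f_n,y\rangle_r=\la_n^r\langle \f_n,y\rangle$; hence if $y\in\cV_r$ is $r$-orthogonal to every $\f_n$, then $\langle \f_n,y\rangle=0$ for all $n$ and completeness of $\{\f_n\}$ in $\cH$ forces $y=0$ at once, with no appeal to injectivity of ${\bf A}^{r/2}$ or to the functional calculus applied to $y$. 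Your route costs slightly more machinery on the completeness step but is equally valid; everything else is the same bookkeeping either way.
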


Another perspective on the last theorem is that it gives us a valuable indicator for when a space is a left-definite space for a specific operator.

On the side we note that left-definite theory can be extended to bounded below operators by applying shifts. Uniqueness is then given up to the chosen shift.

A description of these left-definite spaces in terms of standard boundary conditions on a Hilbert space has been noticeably missing, despite the broad framework and range of results described above. This isn't to say that there are no descriptions of the left-definite spaces, just that they are not classically expressed by GKN theory.

\begin{ex*}
Let ${\bf A}$ denote the usual self-adjoint operator with the Laguerre polynomials as a complete set of orthogonal eigenfunctions $\{\f_n\}_{n=0}^{\infty}$. For $\al>-1$ and $j\in\NN_0$, let $L_{\al+j}^2(0,\infty)$ be the Lebesgue space with norm induced by the inner product $\int_0^{\infty}f(t)\overline{g(t)}t^{\al+j}e^{-t}dt$. 
The $n$th left-definite Hilbert space associated with the pair $(\cH,{\bf A}) = (L^2_{\al}(0,\infty),{\bf A})$, also possessing this complete set of eigenfunctions, is defined as $\cH_n=(\cV_n,\langle\fdot,\fdot\rangle_n)$, where
$$
\cV_n:=\left\{f:(0,\infty)\to\CC ~~\bigg|~~ f\in\text{AC}\ti{loc}^{(n-1)}(0,\infty);~ f^{(n)}\in L_{\al+n}^2(0,\infty)\right\}
$$
and
$$
\langle p,q\rangle_n:=\sum_{j=0}^n b_j(n,k)\int_0^{\infty} p^{(j)}(t)\overline{q^{(j)}(t)}t^{\al+j}e^{-t}dt ~~\text{ for } (p,q\in\mathcal{P}),
$$
where $\mathcal{P}$ is the space of all (possibly complex-valued) polynomials. The constants $b_j(n,k)$ are defined as
$$
b_j(n,k):=\sum_{i=0}^j \dfrac{(-1)^{i+j}}{j!}\binom{j}{i}(k+i)^n.
$$

This description of a specific left-definite space is only included as a reference for the complexity of the results in this paper, and details can be found in \cite{LW02}.
\hfill$\kreuz$
\end{ex*}

\subsection{General Glazman--Krein--Naimark (GKN) Theory}\label{s-gkn}
There is a vast amount of literature concerning the extensions of symmetric operators. Here we present only that which pertains to self-adjoint extensions and applications to GKN theory. This will be primarily applied to linear differential operators.

\begin{defn}[variation of {\cite[Section 14.2]{N}}]\label{d-defect}
For a a symmetric, closed operator ${\bf  A}$ on a Hilbert space $\cH$, define 
the {\bf positive defect space} and the {\bf negative defect space}, respectively, by
$$\cD_+:=\left\{f\in\cD({\bf  A}^*)~~|~~{\bf  A}^*f=if\right\}
\qquad\text{and}\qquad
\cD_-:=\left\{f\in\cD({\bf  A}^*)~~|~~{\bf  A}^*f=-if\right\}.$$
\end{defn}

On the side we note that, in light of {\cite[Theorem XII.4.8]{DS}}, we can assume without loss of generality that all considered operators are closed because we are concerned exclusively with self-adjoint extensions of symmetric operators.

We are most interested in the dimensions dim$(\cD_+)=m_+$ and dim$(\cD_-)=m_-$, which are called the {\bf positive} and {\bf negative deficiency indices of ${\bf  A}$}, respectively. These dimensions are usually conveyed as the pair $(m_+,m_-)$. 
The deficiency indices of $T$ correspond to how far from self-adjoint ${\bf  A}$ is. A symmetric operator ${\bf  A}$ has self-adjoint extensions if and only if its deficiency indices are equal {\cite[Section 14.8.8]{N}}.

\begin{theo}[{\cite[Theorem 14.4.4]{N}}]\label{t-decomp}
If ${\bf  A}$ is a closed, symmetric operator, then the subspaces $\cD_{\bf  A}$, $\mathcal{D}_+$, and $\mathcal{D}_{-}$ are linearly independent and their direct sum coincides with $\cD_{{\bf  A}^*}$, i.e.,
$$\cD_{{\bf  A}^*}=\cD_{\bf  A}\dotplus\mathcal{D}_+ \dotplus\mathcal{D}_{-}.$$
(Here, subspaces $\cX_1, \cX_2, \hdots ,\cX_p$ are said to be {\bf linearly independent}, if $\sum_{i=1}^p x_i = 0$ for $x_i\in \cX_i$ implies that all $x_i=0$.)
\end{theo}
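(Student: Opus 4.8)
The plan is to recast the claim as an orthogonal decomposition of the Hilbert space obtained by endowing $\cD_{{\bf A}^*}$ with the \emph{graph inner product}
\[
\langle f,g\rangle_* := \langle f,g\rangle + \langle {\bf A}^* f,{\bf A}^* g\rangle .
\]
Since ${\bf A}$ is symmetric it is densely defined, so ${\bf A}^*$ exists and is closed; consequently $(\cD_{{\bf A}^*},\langle\fdot,\fdot\rangle_*)$ is complete, i.e.~a Hilbert space. Within it, $\cD_{\bf A}$ is closed (closedness of ${\bf A}$ means its graph is closed, and on $\cD_{\bf A}$ the graph norm of ${\bf A}^*$ coincides with that of ${\bf A}$), while $\mathcal{D}_\pm = \Ker({\bf A}^*\mp iI)$ are closed as kernels of maps that are bounded from $(\cD_{{\bf A}^*},\langle\fdot,\fdot\rangle_*)$ into $\cH$.

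First I would establish that $\cD_{\bf A}$, $\mathcal{D}_+$ and $\mathcal{D}_-$ are mutually orthogonal with respect to $\langle\fdot,\fdot\rangle_*$; since pairwise orthogonality forces $\|x_0+x_++x_-\|_*^2 = \|x_0\|_*^2 + \|x_+\|_*^2 + \|x_-\|_*^2$, this immediately yields the asserted linear independence. Each of the three pairings is a one-line computation using ${\bf A}^* f={\bf A} f$ on $\cD_{\bf A}$, the eigen-relations ${\bf A}^* g=\pm i g$ on $\mathcal{D}_\pm$, and the symmetry identity $\langle {\bf A} f,g\rangle=\langle f,{\bf A}^* g\rangle$. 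For instance, for $f\in\cD_{\bf A}$ and $g\in\mathcal{D}_+$ one gets $\langle {\bf A}^* f,{\bf A}^* g\rangle = -i\langle {\bf A} f,g\rangle = -i\langle f,{\bf A}^* g\rangle = -\langle f,g\rangle$, so that $\langle f,g\rangle_*=0$; the pairs $(\cD_{\bf A},\mathcal{D}_-)$ and $(\mathcal{D}_+,\mathcal{D}_-)$ are handled identically.

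The substantive step is to show that the orthogonal direct sum $\cD_{\bf A}\dotplus\mathcal{D}_+\dotplus\mathcal{D}_-$ exhausts $\cD_{{\bf A}^*}$. Because the three summands are closed and mutually orthogonal, their sum is a closed subspace, and it suffices to prove that any $h\in\cD_{{\bf A}^*}$ which is $\langle\fdot,\fdot\rangle_*$-orthogonal to all three must vanish. Orthogonality to $\cD_{\bf A}$ reads $\langle {\bf A} f,{\bf A}^* h\rangle=\langle f,-h\rangle$ for every $f\in\cD_{\bf A}$; by the very definition of the adjoint of the closed operator ${\bf A}$ this forces ${\bf A}^* h\in\cD_{{\bf A}^*}$ with $({\bf A}^*)^2 h=-h$. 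Writing $u:={\bf A}^* h$ and setting $h_+:=\tfrac12(h-iu)$ and $h_-:=\tfrac12(h+iu)$, the relation $({\bf A}^*)^2h=-h$ gives ${\bf A}^* h_\pm=\pm i h_\pm$, so $h_\pm\in\mathcal{D}_\pm$ and $h=h_++h_-$. Finally, orthogonality of $h$ to $\mathcal{D}_+$ and $\mathcal{D}_-$ yields $\langle h,h\rangle_*=\langle h,h_+\rangle_*+\langle h,h_-\rangle_*=0$, whence $h=0$.

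I expect the main obstacle to be the domain bookkeeping in the penultimate step: extracting from a single orthogonality condition that $h$ actually lies in the domain of $({\bf A}^*)^2$ and satisfies $({\bf A}^*)^2h=-h$. This is precisely where closedness of ${\bf A}$ (equivalently ${\bf A}={\bf A}^{**}$) is indispensable, since it identifies $\cD_{{\bf A}^*}$ as the domain of the adjoint of ${\bf A}$ and licenses the passage from $\langle {\bf A} f,{\bf A}^* h\rangle=\langle f,-h\rangle$ to ${\bf A}^*({\bf A}^* h)=-h$. The orthogonality and eigenvalue computations, by contrast, are routine, and the completeness of the graph inner product is what turns the trivial orthogonal complement into an honest equality rather than mere density.
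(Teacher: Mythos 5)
Your proof is correct and takes essentially the same route as the paper's own argument (the proof of Lemma \ref{l-graphnorm}, following Dunford--Schwartz, which gives exactly this decomposition in graph-norm form): endow $\cD_{{\bf A}^*}$ with the graph inner product, verify that $\cD_{\bf A}$, $\cD_+$, $\cD_-$ are closed and mutually orthogonal, and kill the common orthogonal complement by using the definition of the adjoint to get $({\bf A}^*)^2h=-h$. The only cosmetic differences are that you split $h=h_++h_-$ with $h_\pm=\tfrac12(h\mp i\,{\bf A}^*h)$ where the paper factors $(I+i{\bf A}^*)(I-i{\bf A}^*)h=0$ and argues through $\cD_+$ and then $\cD_-$ separately, and that you obtain closedness of $\cD_\pm$ from boundedness of ${\bf A}^*\mp iI$ on the graph-norm space rather than from finite dimensionality of the defect spaces (your justification is in fact the more general one).
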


\subsection{GKN Theory and Sturm--Liouville operators}
Let $\ell[\fdot]$ be a Sturm--Liouville differential expression on some Hilbert space $L^2[(a,b),w]$ as in \eqref{d-sturmop}. Furthermore, let $\ell[\fdot]$ generate an expression $\ell^n[\fdot]$ of order $2n$ via composition, $n\in\NN$. The analysis of self-adjoint extensions does not involve changing the differential expression associated with the operator at all, merely the domain of definition, by applying boundary conditions. 

\begin{defn}[{\cite[Section 17.2]{N}}]\label{d-max}
The {\bf maximal domain} of $\ell^n[\fdot]$ is given by 
\begin{align*}
\cD^n\ti{max}=\cD\ti{max}(\ell^n):=\bigg\{f:(a,b)\to\mathbb{C}~~\bigg|~~f^{(k)}(x)\in\text{AC}\ti{loc}(a,b),~~k=&0,1,\dots,2n-1;
\\
&
f,\ell^n[f]\in L^2[(a,b),w]\bigg\}.
\end{align*}
\end{defn}

The designation of ``maximal'' is appropriate in this case because $\cD\ti{max}(\ell^n)$ is the largest possible subspace for which $\ell^n$ maps back into $L^2[(a,b),w]$. For $f,g\in\cD\ti{max}(\ell^n)$ and $a<\al\le \beta<b$ the {\bf sesquilinear form} associated with $\ell^n$ by
\begin{equation}\label{e-greens}
[f,g]_n\bigg|_{\al}^{\beta}:=\int_{\al}^{\beta}\left\{\ell^n[f(x)]\overline{g(x)}-\ell^n[\overline{g(x)}]f(x)\right\}w(x)dx.
\end{equation}
The equation \eqref{e-greens} is {\bf Green's formula} for $\ell^n[\fdot]$, and is an equivalent definition to the classical one from Sturm--Liouville theory utilizing Wronskians \cite[Equation (3.5)]{LWOG}.

\begin{theo}[{\cite[Section 17.2]{N}}]\label{t-limits}
The limits $[f,g]_n(b):=\lim_{x\to b^-}[f,g]_n(x)$ and $[f,g]_n(a):=\lim_{x\to a^+}[f,g]_n(x)$ exist and are finite for $f,g\in\cD\ti{max}(\ell^n)$.
\end{theo}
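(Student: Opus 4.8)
The plan is to reduce everything to the absolute integrability of the integrand in Green's formula \eqref{e-greens} over all of $(a,b)$, and then to decouple the two endpoints by anchoring the bilinear concomitant at an interior reference point. First I would invoke Lagrange's identity, the pointwise companion to \eqref{e-greens} guaranteed by the equivalence with the classical Wronskian definition: writing $[f,g]_n(x)$ for the classical (Wronskian) boundary form, one has for $a<\al\le\beta<b$
\begin{equation*}
[f,g]_n(\beta)-[f,g]_n(\al)=[f,g]_n\Big|_\al^\beta=\int_\al^\beta\big\{\ell^n[f]\overline{g}-\ell^n[\overline{g}]f\big\}w\,dx.
\end{equation*}
Since each $f^{(k)}$ is locally absolutely continuous on $(a,b)$ for $k=0,\dots,2n-1$ by the definition of $\cD\ti{max}(\ell^n)$, and the Lagrangian coefficients $a_j$ belong to $C^j(a,b)$, the number $[f,g]_n(c)$ is finite at every interior point $c\in(a,b)$.

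Next I would show the integrand lies in $L^1(a,b)$. Because the coefficients $a_j$ are real-valued, $\ell^n[\overline{g}]=\overline{\ell^n[g]}$, so the integrand equals $\ell^n[f]\overline{g}-f\,\overline{\ell^n[g]}$. Using the memberships $f,g,\ell^n[f],\ell^n[g]\in L^2[(a,b),w]$ built into $\cD\ti{max}(\ell^n)$ together with the Cauchy--Schwarz inequality,
\begin{equation*}
\int_a^b\big|\ell^n[f]\overline{g}-f\,\overline{\ell^n[g]}\big|\,w\,dx\le\|\ell^n[f]\|\,\|g\|+\|f\|\,\|\ell^n[g]\|<\infty,
\end{equation*}
where $\|\fdot\|$ denotes the norm of $L^2[(a,b),w]$. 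In particular, the integrand is absolutely integrable on each of the subintervals $(a,c)$ and $(c,b)$.

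Finally I would fix $c\in(a,b)$ and write, via Lagrange's identity, $[f,g]_n(x)=[f,g]_n(c)+\int_c^x\{\ell^n[f]\overline{g}-f\,\overline{\ell^n[g]}\}w\,dt$ for $x$ near $b$. As $x\to b^-$ the integral converges to the finite number $\int_c^b\{\fdot\}w\,dt$ by the absolute integrability established above, so $[f,g]_n(b):=\lim_{x\to b^-}[f,g]_n(x)$ exists and is finite; the argument at $a$ is identical, with $\int_c^x\to-\int_a^c\{\fdot\}w\,dt$. The one point requiring care — and the reason for anchoring at the interior point $c$ rather than reading the conclusion directly off \eqref{e-greens} — is that convergence of the full integral over $(a,b)$ controls only the \emph{difference} $[f,g]_n(\beta)-[f,g]_n(\al)$ of the two boundary values; to obtain the separate existence of each one-sided limit I must split the integral at $c$ and use that each piece converges absolutely on its own, which the Cauchy--Schwarz bound supplies.
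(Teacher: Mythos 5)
Your proof is correct and is essentially the classical argument behind the cited result (the paper itself imports Theorem \ref{t-limits} from Naimark, Section 17.2, without reproducing a proof): Green's formula/Lagrange's identity, absolute integrability of $\ell^n[f]\overline{g}-f\,\overline{\ell^n[g]}$ via Cauchy--Schwarz from the $L^2$ memberships defining $\cD\ti{max}(\ell^n)$, and anchoring at an interior point to obtain each one-sided limit separately. Your closing remark correctly identifies the only subtle point, namely that the integral over all of $(a,b)$ alone controls only the difference of the two boundary values.
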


\begin{defn}[{\cite[Section 17.2]{N}}]\label{d-min}
The {\bf minimal domain} of $\ell^n[\fdot]$ is given by
\begin{align*}
\cD^n\ti{min}=\cD\ti{min}(\ell^n)=\{f\in\cD\ti{max}(\ell^n)~~|~~[f,g]_n\big|_a^b=0~~\forall g\in\cD\ti{max}(\ell^n)\}.
\end{align*}
\end{defn}

The maximal and minimal operators associated with the expression $\ell^n[\fdot]$ are defined as ${\bf L}^n\ti{min}=\{\ell^n,\cD^n\ti{min}\}$ and ${\bf L}^n\ti{max}=\{\ell^n,\cD^n\ti{max}\}$ respectively. By {\cite[Section 17.2]{N}}, these operators are adjoints of one another, i.e.~$({\bf L}^n\ti{min})^*={\bf L}^n\ti{max}$ and $({\bf L}^n\ti{max})^*={\bf L}^n\ti{min}$.

In the context of differential operators, we work with the a special case of Theorem \ref{t-decomp}:

\begin{theo}[{\cite[Section 14.5]{N}}]
Let $\cD^n\ti{max}$ and $\cD^n\ti{min}$ be the maximal and minimal domains associated with the differential expression $\ell^n[\fdot]$, respectively. Then, for $n\in\NN$,
\begin{equation}\label{e-vN}
\cD^n\ti{max}=\cD^n\ti{min}\dotplus\cD^n_+\dotplus\cD^n_-.
\end{equation}
\end{theo}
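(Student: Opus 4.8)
The plan is to obtain \eqref{e-vN} as a direct specialization of the abstract decomposition Theorem~\ref{t-decomp}, applied to the operator ${\bf A}={\bf L}^n\ti{min}=\{\ell^n,\cD^n\ti{min}\}$. The real content is not a new argument but a verification that the concrete objects $\cD^n\ti{min}$, $\cD^n\ti{max}$, $\cD^n_+$, $\cD^n_-$ are exactly the abstract objects $\cD_{\bf A}$, $\cD_{{\bf A}^*}$, $\cD_+$, $\cD_-$ to which that theorem refers; once this dictionary is set up, the conclusion is immediate.

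First I would check that ${\bf L}^n\ti{min}$ satisfies the hypotheses of Theorem~\ref{t-decomp}, i.e.\ that it is closed and symmetric. Symmetry follows from Definition~\ref{d-min} together with Green's formula \eqref{e-greens}: since the coefficients $a_k$ are real we have $\ell^n[\overline{g}]=\overline{\ell^n[g]}$, so the sesquilinear form evaluates to $[f,g]_n\big|_a^b=\langle \ell^n[f],g\rangle-\langle f,\ell^n[g]\rangle$. Hence the defining condition of $\cD^n\ti{min}$, namely $[f,g]_n\big|_a^b=0$ for all $g\in\cD^n\ti{max}$, forces $\langle \ell^n[f],g\rangle=\langle f,\ell^n[g]\rangle$ whenever $f,g\in\cD^n\ti{min}\subset\cD^n\ti{max}$, which is symmetry of ${\bf L}^n\ti{min}$. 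Closedness of the minimal operator associated to a differential expression is standard and may be cited from \cite{N}; alternatively it follows from the adjoint relation recorded above, since an adjoint is always closed and $({\bf L}^n\ti{min})^{**}={\bf L}^n\ti{min}$.

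Next I would identify the four subspaces. By construction $\cD_{\bf A}=\cD^n\ti{min}$. The adjoint identity $({\bf L}^n\ti{min})^*={\bf L}^n\ti{max}$, already noted from \cite[Section 17.2]{N}, gives $\cD_{{\bf A}^*}=\cD^n\ti{max}$ and shows that ${\bf A}^*$ acts as $\ell^n[\fdot]$. Substituting ${\bf A}^*$ into Definition~\ref{d-defect} then yields $\cD_\pm=\{f\in\cD^n\ti{max}\mid \ell^n[f]=\pm i f\}=\cD^n_\pm$, so the abstract defect spaces coincide with the concrete ones. Feeding these four identifications into the conclusion of Theorem~\ref{t-decomp} reproduces \eqref{e-vN} verbatim, and linear independence of the summands is inherited from that theorem.

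The main obstacle is not a hard estimate but the bookkeeping of this operator-theoretic dictionary: one must confirm that the $\pm i$-eigenvalue characterization of $\cD^n_\pm$ used here matches the definition in force, and that the finite deficiency indices $(nm,nm)$ recorded in the notation section are what make the direct sum well posed and finite-dimensional in its defect part. Everything substantive is carried by the cited facts that ${\bf L}^n\ti{min}$ is closed and symmetric with adjoint ${\bf L}^n\ti{max}$; granting these, the theorem is a one-line consequence of Theorem~\ref{t-decomp}.
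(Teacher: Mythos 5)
Your proposal is correct and follows exactly the route the paper intends: the paper introduces this theorem with the words ``we work with a special case of Theorem~\ref{t-decomp}'', having just recorded the adjoint relation $({\bf L}^n\ti{min})^*={\bf L}^n\ti{max}$ from \cite[Section 17.2]{N}, and your argument simply fills in that specialization (closedness and symmetry of ${\bf L}^n\ti{min}$, identification of $\cD_{{\bf A}^*}$ with $\cD^n\ti{max}$ and of the abstract defect spaces with $\cD^n_\pm$). No gaps; this matches the paper's approach.
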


Equation \eqref{e-vN} is commonly known as {\bf von Neumann's formula}. Here $\dotplus$ denotes the direct sum, and $\cD^n_+,\cD^n_-$ are the defect spaces associated with the expression $\ell^n[\fdot]$. The decomposition can be made into an orthogonal direct sum by using the graph norm, see Section \ref{s-graph}.

From {\cite[Section 14.8.8]{N}} we know that, if the operator ${\bf L}^n\ti{min}$ has any self-adjoint extensions, then the deficiency indices of ${\bf L}^n\ti{min}$ have the form $(m,m)$, where $0\leq m\leq 2n$ and $2n$ is the order of $\ell^n[\fdot]$.
Akhiezer and Glazman \cite{AG} have shown that the number $m$ can take on any value between $0$ and $2n$. In regards to differential expressions, the order of the operator is greater than or equal to each of the two deficiency indices by necessity.  Hence, Sturm--Liouville expressions that generate self-adjoint operators have deficiency indices $(0,0)$, $(1,1)$ or $(2,2)$. This is related to the discussion of an expression being limit-point or limit-circle at endpoints, see \cite{BDG,DS,H,N} for more details. 

In order to formulate the GKN theorems, we recall an extension of linear independence to one that mods out by a subspace. This subspace will be the minimal domain in applications.

\begin{defn}[{\cite[Section 14.6]{N}}]\label{d-linind}
Let $\cX_1$ and $\cX_2$ be subspaces of a vector space $\cX$ such that $\cX_1\le \cX_2$. Let $\{x_1,x_2,\dots,x_r\}\subseteq \cX_2$. We say that $\{x_1,x_2,\dots,x_r\}$ is {\bf linearly independent modulo $\cX_1$} if
$$\sum_{i=1}^r\al_ix_i\in \cX_1 \text{ implies } \al_i=0\text{ for all }i=1,2,\dots, r.$$
\end{defn}

The following two theorems form the core of GKN theory.

\begin{theo}[GKN1,~{\cite[Theorem 18.1.4]{N}}]\label{t-gkn1}
Let ${\bf L}^n=\{\ell^n,\cD_{{\bf L}}^n\}$ be a self-adjoint extension of the minimal operator ${\bf L}^n\ti{min}=\{\ell^n,\cD^n\ti{min}\}$ with deficiency indices $(m,m)$. Then the domain $\cD_{{\bf L}}^n$ consists of the set of all functions $f\in\cD^n\ti{max}$, which satisfy the conditions
\begin{equation}\label{e-gkn1a}
[f,w_k]_n\bigg|_a^b=0, \text{ }k=1,2,\dots,m ,
\end{equation}
where $w_1,\dots,w_m\in \cD^n\ti{max}$ are linearly independent modulo $\cD^n\ti{min}$ for which the relations
\begin{equation}\label{e-gkn1b}
[w_j,w_k]_n\bigg|_a^b=0, \text{ }j,k=1,2,\dots,m
\end{equation}
hold.
\end{theo}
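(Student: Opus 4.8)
The plan is to reduce the statement to finite-dimensional linear algebra on the quotient space $\mathcal{Q}:=\cD^n\ti{max}/\cD^n\ti{min}$, in the spirit advertised for Section \ref{s-results}. Write $\pi:\cD^n\ti{max}\to\mathcal{Q}$ for the canonical projection. By von Neumann's formula \eqref{e-vN} and the hypothesis that the deficiency indices are $(m,m)$, we have $\mathcal{Q}\cong\cD^n_+\oplus\cD^n_-$, so $\dim_{\CC}\mathcal{Q}=2m$. The boundary form $[f,g]_n\big|_a^b$ vanishes as soon as $f$ or $g$ lies in $\cD^n\ti{min}$ (this is exactly Definition \ref{d-min}), so it descends to a well-defined sesquilinear form on $\mathcal{Q}$, which I will still write $[\cdot,\cdot]_n\big|_a^b$. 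Because the coefficients of $\ell^n$ are real, Green's formula \eqref{e-greens} yields $\overline{[f,g]_n\big|_a^b}=-[g,f]_n\big|_a^b$, so the descended form is skew-Hermitian; equivalently $H:=\tfrac1i[\cdot,\cdot]_n\big|_a^b$ is a Hermitian form on $\mathcal{Q}$.

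I would next compute the signature of $H$ using the defect spaces. For $f\in\cD^n_+$ one has $\ell^n[f]=if$, so \eqref{e-greens} gives $[f,f]_n\big|_a^b=\langle\ell^n[f],f\rangle-\langle f,\ell^n[f]\rangle=2i\langle f,f\rangle$, whence $H$ is positive definite on $\pi(\cD^n_+)$; symmetrically $\ell^n[f]=-if$ on $\cD^n_-$ makes $H$ negative definite on $\pi(\cD^n_-)$. Since these two $m$-dimensional images span $\mathcal{Q}$ by \eqref{e-vN}, the form $H$ is non-degenerate of signature $(m,m)$. The relevant linear algebra is then standard: on a $2m$-dimensional space a non-degenerate Hermitian form of signature $(m,m)$ has maximal isotropic subspaces of dimension exactly $m$, and any isotropic subspace $W$ with $\dim W=m$ satisfies $W=W^{\perp}$, the orthogonal complement taken with respect to $H$ (equivalently, with respect to the boundary form, since the two have the same annihilators).

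The bridge from operator theory to this picture is the identification of adjoints through Green's formula. For any intermediate domain $\cD$ with $\cD^n\ti{min}\subseteq\cD\subseteq\cD^n\ti{max}$, taking adjoints reverses the inclusions ${\bf L}^n\ti{min}\subseteq\{\ell^n,\cD\}\subseteq{\bf L}^n\ti{max}$ and, using $({\bf L}^n\ti{min})^*={\bf L}^n\ti{max}$ and $({\bf L}^n\ti{max})^*={\bf L}^n\ti{min}$, shows $\{\ell^n,\cD\}^*$ is again intermediate and hence acts by $\ell^n$; then \eqref{e-greens} shows $g\in\cD(\{\ell^n,\cD\}^*)$ iff $[f,g]_n\big|_a^b=0$ for every $f\in\cD$, i.e. $\pi(\cD(\{\ell^n,\cD\}^*))=\pi(\cD)^{\perp}$. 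Applying this to the self-adjoint ${\bf L}^n=\{\ell^n,\cD_{\bf L}^n\}$ gives $\pi(\cD_{\bf L}^n)=\pi(\cD_{\bf L}^n)^{\perp}$, so $\pi(\cD_{\bf L}^n)$ is isotropic and equals its own complement, hence Lagrangian of dimension $m$ by the previous paragraph. Choosing a basis of $\pi(\cD_{\bf L}^n)$ and lifting it to representatives $w_1,\dots,w_m\in\cD_{\bf L}^n\subseteq\cD^n\ti{max}$, these are linearly independent modulo $\cD^n\ti{min}$ and satisfy $[w_j,w_k]_n\big|_a^b=0$ by isotropy, which is \eqref{e-gkn1b}; finally $f\in\cD_{\bf L}^n$ iff $\pi(f)\in\pi(\cD_{\bf L}^n)=\pi(\cD_{\bf L}^n)^{\perp}$ iff $[f,w_k]_n\big|_a^b=0$ for all $k$, which is \eqref{e-gkn1a}.

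The main obstacle, and the step deserving the most care, is pinning the signature of $H$ to $(m,m)$ and thereby forcing the Lagrangian dimension to be exactly $m$: the bare count $\dim\mathcal{Q}=2m$ only bounds maximal isotropic subspaces by $m$, and it is the definiteness of $H$ on $\pi(\cD^n_+)$ and $\pi(\cD^n_-)$ that determines them. A secondary technical point is justifying that the adjoint of an intermediate operator again acts by $\ell^n$ and is governed solely by the boundary form; this relies on the adjoint relations $({\bf L}^n\ti{min})^*={\bf L}^n\ti{max}$ together with the finiteness of the endpoint limits in Theorem \ref{t-limits}, which is what makes the evaluation of \eqref{e-greens} at $a$ and $b$ legitimate.
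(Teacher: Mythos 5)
Your argument is correct, but there is no in-paper proof to compare it to: the paper states Theorem \ref{t-gkn1} purely as background, citing it to Naimark \cite{N} without proof. What you have written is a correct, self-contained derivation in the Lagrangian-subspace (boundary-form) formulation of GKN theory: the form $[\cdot,\cdot]_n\big|_a^b$ descends to the $2m$-dimensional quotient $\cD^n\ti{max}/\cD^n\ti{min}$ by Definition \ref{d-min} and von Neumann's formula \eqref{e-vN}; your computation $[f,f]_n\big|_a^b=\pm 2i\|f\|^2$ on $\cD^n_\pm$ is exactly the right way to pin down non-degeneracy and the signature $(m,m)$, which is the step a careless argument would skip (the dimension count alone does not force maximal isotropic subspaces to have dimension $m$); and the identification $\pi\bigl(\cD(\{\ell^n,\cD\}^*)\bigr)=\pi(\cD)^{\perp}$ for intermediate domains, via $({\bf L}^n\ti{min})^*={\bf L}^n\ti{max}$ and Theorem \ref{t-limits}, is the correct bridge that turns self-adjointness of ${\bf L}^n$ into the Lagrangian condition $\pi(\cD_{\bf L}^n)=\pi(\cD_{\bf L}^n)^{\perp}$. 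Naimark's own proof of this theorem runs differently: it goes through von Neumann's parametrization of self-adjoint extensions by unitaries $V:\cD_+\to\cD_-$, with extension domain $\cD\ti{min}\dotplus(I+V)\cD_+$, and then translates that parametrization into boundary conditions. The two routes are equivalent, but yours is purely finite-dimensional once the quotient is set up, needs no parametrization of the set of all extensions (the extension ${\bf L}^n$ is handed to you), and is closer in spirit to the paper's own Section \ref{s-results}, which likewise reduces extension questions to finite-dimensional linear algebra, there using the graph-norm orthogonal decomposition of Lemma \ref{l-graphnorm} in place of your quotient. What the von Neumann route buys instead is an explicit description of \emph{all} self-adjoint extensions at once, which is needed for the converse statement (GKN2, Theorem \ref{t-gkn2}) but not for the direction you proved.
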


The requirements in equation \eqref{e-gkn1b} are commonly referred to as {\bf Glazman symmetry conditions}. The converse of the GKN1 Theorem is also true.

\begin{theo}[GKN2,~{\cite[Theorem 18.1.4]{N}}]\label{t-gkn2}
Assume we are given arbitrary functions $w_1,w_2,\dots,w_m\in\cD^n\ti{max}$ which are linearly independent modulo $\cD^n\ti{min}$ and which satisfy the relations \eqref{e-gkn1b}. Then the set of all functions $f\in\cD^n\ti{max}$ which satisfy the conditions \eqref{e-gkn1a} is domain of a self-adjoint extension of ${\bf L}^n\ti{min}$.
\end{theo}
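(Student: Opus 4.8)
The plan is to recast the entire statement as finite-dimensional linear algebra on the quotient space $Q:=\cD^n\ti{max}/\cD^n\ti{min}$, exploiting the skew-Hermitian structure induced by the sesquilinear form. First I would check that the form descends to $Q$. Green's formula \eqref{e-greens} together with the identity $\overline{\ell^n[h]}=\ell^n[\bar h]$ (the coefficients are real) yields $[g,f]_n|_a^b=-\overline{[f,g]_n|_a^b}$, so $[\fdot,\fdot]_n|_a^b$ is skew-Hermitian; and if $f_1-f_2\in\cD^n\ti{min}$ then $[f_1,g]_n|_a^b=[f_2,g]_n|_a^b$ by Definition \ref{d-min}, so the form is well defined on $Q\times Q$. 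It is moreover non-degenerate, since $[f,g]_n|_a^b=0$ for all $g\in\cD^n\ti{max}$ forces $f\in\cD^n\ti{min}$, i.e.\ $[f]=0$ in $Q$. Finally, von Neumann's formula \eqref{e-vN} and the equality of the deficiency indices give $\dim Q=\dim\cD^n_++\dim\cD^n_-=2m$. I would also record the bridge to the operator: by \eqref{e-greens}, $\langle\ell^n[f],g\rangle-\langle f,\ell^n[g]\rangle=[f,g]_n|_a^b$.

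Next I would set $W:=\spa\{[w_1],\dots,[w_m]\}\subseteq Q$. The hypothesis that $w_1,\dots,w_m$ are linearly independent modulo $\cD^n\ti{min}$ says exactly that $[w_1],\dots,[w_m]$ are linearly independent, so $\dim W=m$, and the Glazman symmetry conditions \eqref{e-gkn1b} say that $W$ is isotropic, hence $W\subseteq W^\perp$, where $W^\perp:=\{[f]\in Q:[f,w_k]_n|_a^b=0,\ k=1,\dots,m\}$. The candidate domain is $\cD_{\bf L}^n:=\{f\in\cD^n\ti{max}:[f,w_k]_n|_a^b=0,\ k=1,\dots,m\}$, whose image in $Q$ is precisely $W^\perp$, and which contains $\cD^n\ti{min}$ by Definition \ref{d-min}. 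A dimension count then pins down $W^\perp$: because the form is non-degenerate and the $[w_k]$ are independent, the functionals $[\fdot,w_k]_n|_a^b$ on $Q$ are independent, so $[f]\mapsto([f,w_1]_n|_a^b,\dots,[f,w_m]_n|_a^b)$ maps $Q$ onto $\CC^m$ and $\dim W^\perp=2m-m=m$. Combined with $W\subseteq W^\perp$ this forces $W=W^\perp$, i.e.\ $W$ is Lagrangian.

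With this in hand, self-adjointness is immediate. Since ${\bf L}^n\ti{min}\subseteq\{\ell^n,\cD_{\bf L}^n\}$, taking adjoints gives $\{\ell^n,\cD_{\bf L}^n\}^*\subseteq({\bf L}^n\ti{min})^*={\bf L}^n\ti{max}$, so the adjoint acts by $\ell^n$ on a subspace of $\cD^n\ti{max}$, and by the bridge identity its domain is $\{g\in\cD^n\ti{max}:[f,g]_n|_a^b=0\ \forall f\in\cD_{\bf L}^n\}$, whose image in $Q$ is $(W^\perp)^\perp$. For a non-degenerate form one has $(W^\perp)^\perp=W$, and $W=W^\perp$ from the previous step, so the adjoint domain coincides with $\cD_{\bf L}^n$. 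Hence $\{\ell^n,\cD_{\bf L}^n\}$ is self-adjoint, which is the claim.

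The step I expect to require the most care is the passage to the quotient: verifying that the form is well defined, skew-Hermitian, and non-degenerate on $Q$, and that $\dim Q=2m$, since everything downstream is clean linear algebra on a $2m$-dimensional skew-Hermitian space. The one genuinely substantive point hidden inside ``surjectivity'' is that linear independence of the vectors $[w_k]$ in a non-degenerate form implies linear independence of the associated functionals $[\fdot,w_k]_n|_a^b$; this is exactly where non-degeneracy is indispensable, and it is what converts the abstract hypotheses into the sharp equality $\dim W^\perp=m$.
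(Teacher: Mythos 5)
Your proposal is correct, but there is nothing in the paper to compare it against: the paper states GKN2 as a quotation of Naimark \cite[Theorem 18.1.4]{N} and gives no proof of its own. Your argument is a legitimate, self-contained proof in the modern ``symplectic linear algebra'' style: pass to the quotient $Q=\cD^n\ti{max}/\cD^n\ti{min}$, check that the boundary form is well defined, skew-Hermitian and non-degenerate there, count $\dim Q=2m$ via von Neumann's formula \eqref{e-vN}, and show that the isotropic subspace $W$ spanned by the $[w_k]$ is forced to be Lagrangian, whence $\cD({\bf L}^*)$ and $\cD_{\bf L}^n$ have the same (saturated) image in $Q$ and hence coincide. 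This differs from Naimark's classical route, which runs through von Neumann's extension theory and the Cayley transform before being translated into boundary conditions; your route trades that machinery for finite-dimensional duality, which is very much in the spirit of the paper's own philosophy in Sections \ref{s-TestLI} and \ref{s-results} (reduce everything to rank conditions on the boundary form, as in Proposition \ref{p-LI}). Two points deserve explicit acknowledgment. First, your non-degeneracy claim is immediate only because this paper \emph{defines} $\cD^n\ti{min}$ (Definition \ref{d-min}) as the kernel of the boundary form on $\cD^n\ti{max}$; in treatments where the minimal operator is defined as the closure of the compactly supported restriction, that characterization is itself a theorem, so your proof quietly leans on the paper's definitional convention. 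Second, the final step from equal images in $Q$ to equal domains uses that both $\cD_{\bf L}^n$ and $\cD({\bf L}^*)$ contain $\cD^n\ti{min}$ and are unions of cosets of it; this is true and easy (conjugate-linearity plus skew-symmetry of the form), but it is worth a sentence rather than being left implicit.
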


These two theorems completely answer the question of how boundary conditions can be used to create self-adjoint extensions. Applications of this theory hinge on determining the proper $w_k$'s that will define the domain of the desired self-adjoint extension.

\subsection{Test for linear independence modulo the minimal domain}\label{s-TestLI}
Consider a symmetric expression $\ell$ with deficiency indices $(m,m)$ on the Hilbert space $L^2[(a,b),w]$.
The following simple result will be used to test for linear independence modulo $\cD\ti{min}$. The main idea is to find a sufficient condition in terms of a certain matrix of sesquilinear forms (corresponding to $\ell$) having full rank.

\begin{prop}\label{p-LI}
Given vectors $w_1, \hdots, w_r\in \cD\ti{max}$, $r\le 2m$. Assume that the $r\times r$ matrix ${\bf M}$ with entries ${\bf M}_{ik} = [w_i,w_k] |_a^b$ for $1\le i,k \le r$
has full rank. Then $w_1, \hdots, w_r$ are linearly independent modulo $\cD\ti{min}$.
\end{prop}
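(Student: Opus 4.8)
The plan is to prove the contrapositive: I will show that if $w_1,\dots,w_r$ are linearly \emph{dependent} modulo $\cD\ti{min}$, then the matrix ${\bf M}$ is singular. This reduces the statement to a short piece of finite-dimensional linear algebra that exploits only the defining property of the minimal domain together with the sesquilinearity of $[\fdot,\fdot]$, and completely avoids any direct analysis of the functions $w_i$ themselves.

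First I would assume a nontrivial dependence relation: there are scalars $\alpha_1,\dots,\alpha_r\in\CC$, not all zero, such that $g:=\sum_{i=1}^r\alpha_i w_i\in\cD\ti{min}$. By Definition~\ref{d-min}, membership in $\cD\ti{min}$ means precisely that $[g,h]\big|_a^b=0$ for every $h\in\cD\ti{max}$. The key step is to feed the finitely many test functions $h=w_k$ (which lie in $\cD\ti{max}$ by hypothesis) into this identity. Using that $[\fdot,\fdot]\big|_a^b$ is linear in its first slot, I expand
$$0=[g,w_k]\big|_a^b=\sum_{i=1}^r \alpha_i\,[w_i,w_k]\big|_a^b=\sum_{i=1}^r\alpha_i\,{\bf M}_{ik},\qquad k=1,\dots,r.$$
This says that the nonzero row vector $(\alpha_1,\dots,\alpha_r)$ annihilates ${\bf M}$ from the left, i.e.\ lies in its left kernel. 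Hence ${\bf M}$ is rank-deficient, which is exactly the contrapositive of the claim.

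I do not expect a serious obstacle: once the minimal-domain definition is invoked, everything is linear algebra over an $r\times r$ matrix. The only points requiring care are bookkeeping. I must keep track of which argument of the form is linear and which is conjugate-linear, since this determines whether the nonzero null vector sits in the left or right kernel of ${\bf M}$ (or of $\overline{{\bf M}}$); in every convention a nontrivial null vector forces ${\bf M}$ to be singular, so the conclusion is unaffected. I should also state explicitly that each $w_k\in\cD\ti{max}$, since this is what licenses using $w_k$ as an admissible test function against $g\in\cD\ti{min}$. Finally, the hypothesis $r\le 2m$ is not used in the deduction itself; it is merely the natural ceiling coming from $\dim(\cD\ti{max}/\cD\ti{min})=2m$, beyond which the form descends to a space of dimension $2m$ and ${\bf M}$ could not be full rank in the first place. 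I would note this as a remark rather than invoke it in the argument.
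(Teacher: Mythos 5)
Your proof is correct and is essentially the paper's own argument: both take a combination $\sum_i\alpha_i w_i$ assumed to lie in $\cD\ti{min}$, test it against each $w_k$ using the defining property of the minimal domain, and conclude that $(\alpha_1,\dots,\alpha_r)$ is a null vector of ${\bf M}$, which full rank forbids unless all $\alpha_i=0$. The only cosmetic differences are that you phrase it contrapositively and place the $\cD\ti{min}$-element in the first (linear) slot, which in fact handles the conjugate-linearity bookkeeping more cleanly than the paper's version, where the combination sits in the second slot.
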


We will prove this proposition in a moment.

\begin{rem*}
Proposition \ref{p-matrix} shows that the converse of Proposition \ref{p-LI} does not hold.
\end{rem*}

In virtue of Linear Algebra (dimension counting, and realizing that removing vectors from a basis leaves behind a linearly independent set) we obtain an immediate consequence, which both Sections \ref{s-framework} and \ref{s-legendre} rely heavily on.

\begin{cor}\label{corollary}
Given vectors $w_1, \hdots, w_r\in \cD\ti{max}$, $r\le 2m$. Assume that for some vectors $w_{r+1}, \hdots, w_{s}\in \cD\ti{max}$, $r\le s \le 2m$, the $s\times s$ matrix ${\bf M}$ with entries ${\bf M}_{ik} = [w_i,w_k] |_a^b$ (for $1\le i,k \le s$)
has full rank. Then $w_1, \hdots, w_{s}$ are linearly independent modulo $\cD\ti{min}$, and so are the vectors $w_1, \hdots, w_r$.
\end{cor}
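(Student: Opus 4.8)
The plan is to apply Proposition \ref{p-LI} once, to the entire enlarged system $w_1, \hdots, w_s$, and then to pass to the subsystem $w_1, \hdots, w_r$ via an elementary observation about linear independence modulo a subspace. Since $r \le s \le 2m$, the dimension hypothesis $s \le 2m$ required by Proposition \ref{p-LI} is satisfied for the collection $w_1, \hdots, w_s$, and by assumption the associated $s\times s$ matrix ${\bf M}$ with entries ${\bf M}_{ik} = [w_i,w_k]|_a^b$ has full rank. Proposition \ref{p-LI} then yields immediately that $w_1, \hdots, w_s$ are linearly independent modulo $\cD\ti{min}$.

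It remains only to transfer this conclusion to the subcollection $w_1, \hdots, w_r$. For this I would use the general principle that any subset of a family which is linearly independent modulo a subspace is itself linearly independent modulo that subspace. Concretely, if $\sum_{i=1}^r \al_i w_i \in \cD\ti{min}$, then setting the remaining coefficients $\al_{r+1} = \cdots = \al_s = 0$ gives $\sum_{i=1}^s \al_i w_i \in \cD\ti{min}$, whence Definition \ref{d-linind} forces $\al_1 = \cdots = \al_s = 0$ and in particular $\al_1 = \cdots = \al_r = 0$. This is precisely the ``removing vectors'' step alluded to just before the statement.

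I do not expect a genuine obstacle, since both steps are routine. The two points that warrant a moment's care are that the bound $s \le 2m$ needed by Proposition \ref{p-LI} is inherited by the larger system (it is, by hypothesis), and that passage to a subcollection is legitimate for \emph{modular} linear independence rather than ordinary linear independence; the zero-padding argument above addresses the latter and is really the only ingredient beyond directly quoting Proposition \ref{p-LI}.
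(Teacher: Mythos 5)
Your proposal is correct and follows exactly the route the paper intends: apply Proposition \ref{p-LI} to the full system $w_1,\hdots,w_s$ (legitimate since $s\le 2m$ and the $s\times s$ matrix has full rank), then pass to the subcollection $w_1,\hdots,w_r$ by the zero-padding argument, which is precisely the ``removing vectors from a basis leaves behind a linearly independent set'' observation the paper invokes just before the statement. Nothing is missing.
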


\begin{rem*}
In our applications below, we usually have $r=m$ and $s=2m$. Moreover, $w_1, \hdots, w_m$ will be eigenfunctions, and $w_{m+1}, \hdots, w_{2m}$ will be functions of the second kind.
\end{rem*}

\begin{proof}[Proof of Proposition \ref{p-LI}]
Our goal is to show that the set $w_1,\dots,w_r$ is linearly independent modulo the minimal domain $\cD\ti{min}$. To that end, suppose
\begin{align}\label{e-inDmin}
\sum_{k=1}^{r}\al_kw_k\in\cD\ti{min}.
\end{align}
We want to show that $\al_k=0$ for all $k=1,\dots,r$.

The definition of the minimal domain says that $y\in\cD\ti{min}$ if and only if $[y,w]|_{a}^{b}=0$ for all $w\in\cD\ti{max}$. Letting $w = \sum_{k=1}^{r}\al_kw_k$ and using the linearity of the sesquilinear form, we see that \eqref{e-inDmin} implies
\begin{align}\label{equationX}
\sum_{k=1}^{r}\al_k[w_i,w_k]\bigg|_{a}^b=0 \text{ for }i=1,\dots,r.
\end{align}

Now, interpreting \eqref{equationX} for a specific $i$ as the $i$th row of a matrix equation, we see that \eqref{equationX} is equivalent to the matrix equation
\[
{\bf M}{\bf \alpha} = {\bf 0}
\quad\text{with}\quad
{\bf M} = \left(
\begin{matrix}[w_1,w_1]\big|_a^b & \dots & [w_1,w_{r}]\big|_a^b \\ \vdots & \ddots & \vdots \\ [w_{r},w_1] \big|_a^b& \dots & [w_{r},w_{r}]\big|_a^b\end{matrix}\right),
\quad
{\bf \alpha} = \left(
\begin{matrix}\alpha_1\\
\vdots\\
\alpha_r
\end{matrix}\right),
\]
and the zero vector ${\bf 0}\in \R^r$.

And since we assume that ${\bf M}$ has full rank, we conclude that $\alpha_k=0$ for all $k=1,\hdots, r$.
\end{proof}

\subsection{Graph Norm}\label{s-graph}

Let ${\bf  A}$ be a densely defined symmetric operator on a separable Hilbert space $\cH$. Furthermore, for $x,y\in\cD({\bf  A}^*)$, denote the {\bf graph inner product} by $$\langle x,y\rangle\ci{{\bf  A}}:=\langle  x,y\rangle\ci{\cH}+\langle {\bf  A}^*x, {\bf  A}^*y\rangle\ci{\cH}.$$
This section will use the convention that $\cD({\bf  A}^*)$ has the topology defined by the {\bf graph norm} $||x||\ci{{\bf  A}}:=\langle x,x\rangle\ci{{\bf  A}}^{1/2}$ induced by the inner product $\langle x,y\rangle\ci{{\bf  A}}$, unless the contrary is explicitly stated. We note that the closure, $\overline{{\bf  A}}$, of ${\bf  A}$ is the restriction of ${\bf  A}^*$ to the closure of $\cD({\bf  A})$ in the Hilbert space $\cD({\bf  A}^*)$ under the graph norm. The graph norm allows for some more elegant results in the theory of self-adjoint extensions, and will be central to our main theorems.

\begin{lem}[{\cite[Lemma XII.4.10]{DS}}]\label{l-graphnorm} Using the above conventions, we have that:
\begin{enumerate}
\item $\cD(\overline{{\bf  A}})$, $\cD_+$, and $\cD_-$ are closed orthogonal subspaces of the Hilbert space $\cD({\bf  A}^*)$.
\item $\cD({\bf  A}^*)=\cD(\overline{{\bf  A}})\oplus\ci{{\bf  A}}\cD_+\oplus\ci{{\bf  A}}\cD_-$. Here $\oplus\ci{{\bf  A}}$ denotes the orthogonal sum, with respect to the graph inner product.
\end{enumerate}
\end{lem}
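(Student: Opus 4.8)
The plan is to prove Lemma~\ref{l-graphnorm} by establishing the orthogonality of the three subspaces directly from the eigenvalue characterization of the defect spaces, and then upgrading the algebraic direct sum decomposition \eqref{e-vN} (equivalently, Theorem~\ref{t-decomp}) to an orthogonal one. First I would compute the graph inner product $\langle x,y\rangle\ci{{\bf A}}=\langle x,y\rangle\ci{\cH}+\langle {\bf A}^*x,{\bf A}^*y\rangle\ci{\cH}$ on pairs drawn from the eigenspaces $\cD_+$ and $\cD_-$. For $x\in\cD_+$ we have ${\bf A}^*x=ix$ and for $y\in\cD_-$ we have ${\bf A}^*y=-iy$, so that $\langle {\bf A}^*x,{\bf A}^*y\rangle\ci{\cH}=\langle ix,-iy\rangle\ci{\cH}=(i)\overline{(-i)}\langle x,y\rangle\ci{\cH}=-\langle x,y\rangle\ci{\cH}$, whence $\langle x,y\rangle\ci{{\bf A}}=\langle x,y\rangle\ci{\cH}-\langle x,y\rangle\ci{\cH}=0$. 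This gives $\cD_+\perp\cD_-$ in the graph inner product with essentially no work, and is the cleanest part of the argument.

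Next I would show $\cD(\overline{{\bf A}})\perp\cD_\pm$. For $u\in\cD(\overline{{\bf A}})$ and $x\in\cD_+$, I would use that $\overline{{\bf A}}$ is symmetric together with ${\bf A}^*x=ix$: since $\overline{{\bf A}}u\in\cH$ and $\langle\overline{{\bf A}}u,x\rangle\ci{\cH}=\langle u,{\bf A}^*x\rangle\ci{\cH}=\langle u,ix\rangle\ci{\cH}=-i\langle u,x\rangle\ci{\cH}$, the graph inner product becomes $\langle u,x\rangle\ci{{\bf A}}=\langle u,x\rangle\ci{\cH}+\langle \overline{{\bf A}}u,ix\rangle\ci{\cH}=\langle u,x\rangle\ci{\cH}+\overline{i}\langle\overline{{\bf A}}u,x\rangle\ci{\cH}=\langle u,x\rangle\ci{\cH}-i\langle\overline{{\bf A}}u,x\rangle\ci{\cH}$, and substituting $\langle\overline{{\bf A}}u,x\rangle\ci{\cH}=-i\langle u,x\rangle\ci{\cH}$ collapses this to $\langle u,x\rangle\ci{\cH}-i(-i\langle u,x\rangle\ci{\cH})=\langle u,x\rangle\ci{\cH}-\langle u,x\rangle\ci{\cH}=0$. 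The case $x\in\cD_-$ is identical up to signs. I would also need to observe that $\cD_+$ and $\cD_-$ are closed: each is the kernel of the bounded operator ${\bf A}^*\mp i\,\OID$ on the Hilbert space $(\cD({\bf A}^*),\langle\cdot,\cdot\rangle\ci{{\bf A}})$, hence closed; and $\cD(\overline{{\bf A}})$ is closed by the remark preceding the lemma identifying it as the closure of $\cD({\bf A})$ in the graph norm.

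For part~(2), I would invoke the algebraic von~Neumann decomposition $\cD({\bf A}^*)=\cD(\overline{{\bf A}})\dotplus\cD_+\dotplus\cD_-$ from Theorem~\ref{t-decomp}, which provides that every element decomposes (uniquely, by linear independence of the summands) as a sum of one vector from each subspace. Having shown in part~(1) that these three subspaces are mutually orthogonal and closed in the graph norm, the algebraic direct sum is automatically an orthogonal direct sum, and the three closed mutually orthogonal subspaces spanning the whole space give $\cD({\bf A}^*)=\cD(\overline{{\bf A}})\oplus\ci{{\bf A}}\cD_+\oplus\ci{{\bf A}}\cD_-$. The only genuine subtlety, and the step I expect to be the main obstacle, is confirming completeness of the graph-norm inner product space $\cD({\bf A}^*)$ so that ``orthogonal direct sum'' is meaningful as a Hilbert-space decomposition rather than a merely algebraic one: this follows because ${\bf A}^*$ is a closed operator (being the adjoint of a densely defined operator), which is exactly the statement that its graph is closed in $\cH\times\cH$, i.e.\ that $(\cD({\bf A}^*),\langle\cdot,\cdot\rangle\ci{{\bf A}})$ is complete. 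Once completeness is in hand, the remaining assembly is routine.
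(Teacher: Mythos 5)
Your proof is correct, and your part (1) — the mutual orthogonality computations from ${\bf A}^*x=\pm ix$ together with the symmetry relation $\langle\overline{{\bf A}}u,x\rangle\ci{\cH}=\langle u,{\bf A}^*x\rangle\ci{\cH}$ — is essentially the paper's argument. Where you genuinely diverge is in part (2) and in how you get closedness of the defect spaces. For part (2) you import the algebraic von Neumann decomposition of Theorem \ref{t-decomp} (applied to the closed symmetric operator $\overline{{\bf A}}$, whose adjoint and defect spaces coincide with those of ${\bf A}$) and upgrade it to an orthogonal decomposition using part (1); this is legitimate, since Theorem \ref{t-decomp} is quoted from Naimark earlier in the paper, and it is shorter — indeed, once the algebraic sum and mutual orthogonality are both in hand, the orthogonal decomposition follows and completeness of the graph norm is not even strictly required for that step. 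The paper instead gives a self-contained argument that never invokes Theorem \ref{t-decomp}: it shows any $v$ graph-orthogonal to all three subspaces satisfies ${\bf A}^*{\bf A}^*v=-v$, factors $(I+i{\bf A}^*)(I-i{\bf A}^*)v=0$ so that $(I-i{\bf A}^*)v\in\cD_+$, uses orthogonality to $\cD_+$ to force $(I-i{\bf A}^*)v=0$, hence $v\in\cD_-$, hence $v=0$; combined with closedness this identifies the orthogonal complement of the sum as $\{0\}$ and in effect re-proves von Neumann's formula in the orthogonal setting. So your route buys brevity at the price of logical dependence on the cited decomposition theorem, while the paper's buys self-containment (the lemma then independently yields equation \eqref{e-vN}, as the paper remarks). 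A smaller difference works in your favor: you prove closedness of $\cD_\pm$ as kernels of the bounded maps ${\bf A}^*\mp i\OID$ from $(\cD({\bf A}^*),\|\cdot\|\ci{{\bf A}})$ to $\cH$, which is valid for arbitrary (possibly infinite) deficiency indices, whereas the paper argues from finite-dimensionality of $\cD_\pm$ — sufficient for its applications, but less general.
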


The results in this paper heavily rely on this orthogonal decomposition so a proof of the Lemma, following that of \cite{DS}, is included.

\begin{proof}
The space $\cD(\overline{{\bf  A}})$ is closed by the above note, while $\cD_+$ and $\cD_-$ are closed because they are finite dimensional. Since $\cD_+$ and $\cD_-$ are clearly linear subspaces of $\cD({\bf  A}^*)$, it remains to show that the spaces $\cD(\overline{{\bf  A}})$, $\cD_+$, and $\cD_-$ are mutually orthogonal, and that their sum is $\cD({\bf  A}^*)$.

Suppose $d\in\cD(\overline{{\bf  A}})$, $d_+\in\cD_+$, and $d_-\in\cD_-$. We will show that $\langle d,d_+\rangle\ci{{\bf  A}}=\langle d,d_-\rangle\ci{{\bf  A}}=\langle d_-,d_+\rangle\ci{{\bf  A}}=0$. First, since ${\bf  A}^*\supseteq\overline{{\bf  A}}$ we compute
\begin{align*}
\langle d,d_+\rangle\ci{{\bf  A}}&=\langle d,d_+\rangle+\langle {\bf  A}^*d,{\bf  A}^*d_+\rangle=\langle d,d_+\rangle+\langle \overline{{\bf  A}}d,{\bf  A}^*d_+\rangle \\
&=\langle d,d_+\rangle+\langle \overline{{\bf  A}}d,id_+\rangle=\langle d,d_+\rangle+\langle d,i\overline{{\bf  A}}^*d_+\rangle \\
&=\langle d,d_+\rangle+\langle d,i{\bf  A}^*d_+\rangle=\langle d,d_+\rangle+\langle d,i^2d_+\rangle=0.
\end{align*}
Similarly, $\langle d,d_-\rangle\ci{{\bf  A}}=0$. Next,
\begin{align*}
\langle d_-,d_+\rangle\ci{{\bf  A}}&=\langle d_-,d_+\rangle+\langle {\bf  A}^*d_-,{\bf  A}^*d_+\rangle \\
&=\langle d_-,d_+\rangle+\langle -id_-,id_+\rangle=0.
\end{align*}
Hence the spaces $\cD(\overline{{\bf  A}})$, $\cD_+$, and $\cD_-$ are mutually orthogonal, and $\cD(\overline{{\bf  A}})\oplus\ci{{\bf  A}}\cD_+\oplus\ci{{\bf  A}}\cD_-$ is contained in $\cD({\bf  A}^*)$.

To show that they are equal, we will show that zero is the only vector orthogonal to the three subspaces involved. Suppose $v$ is orthogonal to $\cD(\overline{{\bf  A}})$, $\cD_+$, and $\cD_-$. Then $0=\langle d,v\rangle\ci{{\bf  A}}=\langle d,v\rangle+\langle {\bf  A}^*d,{\bf  A}^*v\rangle$, for all $d$ in $\cD(\overline{{\bf  A}})$. Hence $\langle d,v\rangle=-\langle {\bf  A}^*d,{\bf  A}^*v\rangle=-\langle {\bf  A}d,{\bf  A}^*v\rangle$ because ${\bf  A}$ is symmetric and so ${\bf  A}={\bf  A}^*$ on $\cD(\overline{{\bf  A}})$. 

Recall that $\langle \fdot,v\rangle$ is a continuous linear functional on the dense subset $\cD(\overline{{\bf  A}})$ of the original Hilbert space $\cH$. By the definition of the adjoint, we see that ${\bf  A}^*v$ is in $\cD({\bf  A}^*)$ and so ${\bf  A}^*({\bf  A}^*v)=-v$. Hence, we have $(I+{\bf  A}^*{\bf  A}^*)v=(I+i{\bf  A}^*)(I-i{\bf  A}^*)v=0$. And so we obtain ${\bf  A}^*[(I-i{\bf  A}^*)v]=i(I-i{\bf  A}^*)v$, or $(I-i{\bf  A}^*)v\in\cD_+$. Also, if $d_+\in\cD_+$, then
\begin{align*}
0&=\langle v,d_+\rangle\ci{{\bf  A}}=\langle v,d_+\rangle+\langle {\bf  A}^*v,{\bf  A}^*d_+\rangle=\langle v,d_+\rangle+\langle {\bf  A}^*v,id_+\rangle \\
&=\langle v,d_+\rangle-i\langle {\bf  A}^*v,d_+\rangle=\langle (I-i{\bf  A}^*)v,d_+\rangle.
\end{align*}
Since $(I-i{\bf  A}^*)v$ is in $\cD_+$, this implies that $(I-i{\bf  A}^*)v=0$. Hence ${\bf  A}^*v=-iv$, or $v\in\cD_-$. But $\langle \cD_-,v\rangle\ci{{\bf  A}}=0$. Hence $v=0$. Therefore $\cD({\bf  A}^*)=\cD(\overline{{\bf  A}})\oplus\ci{{\bf  A}}\cD_+\oplus\ci{{\bf  A}}\cD_-$.
\end{proof}

The previous lemma can be viewed with regards to differential operators by using that the maximal and minimal operators are adjoints of one another. Our differential operators are assumed to be closed, so it is possible to replace $\cD(\overline{{\bf  A}})$ with $\cD\ti{min}$ and $\cD({\bf  A}^*)$ with $\cD\ti{max}$, while still assuming that $\cD\ti{max}$ is endowed with the graph norm. Hence, the lemma translates into von Neumann's formula \eqref{e-vN}, but the decomposition is orthogonal due to the different norm. The $r/2$ graph norm of ${\bf A}$ will be of particular interest, and is denoted by $\langle x,y\rangle\ci{{\bf A}^{r/2}}=\langle x,y\rangle\ci{\cH}+\langle {\bf A}^{r/2}x,{\bf A}^{r/2}y\rangle\ci{\cH}$. The operator ${\bf A}$ is assumed to be self-adjoint and bounded below by $k>0$, as in Section \ref{s-leftdef}.
\begin{lem} \label{t-norms}
The $rth$ left-definite norm is equivalent to the $r/2$ graph norm. Concretely,
\[
\|x\|_r^2\le
\|x\|\ci{{\bf A}^{r/2}}^2\le
C\|x\|_r^2,
\]
where the constant $C$ depends on $k$ and $r$. 
\end{lem}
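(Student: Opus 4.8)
The plan is to reduce both quantities to the two basic norms $\|{\bf A}^{r/2}x\|\ci{\cH}$ and $\|x\|\ci{\cH}$, after which each inequality becomes elementary. By Definition \ref{t-ldinpro} the left-definite inner product is $\langle x,x\rangle_r=\langle {\bf A}^{r/2}x,{\bf A}^{r/2}x\rangle\ci{\cH}$, so $\|x\|_r^2=\|{\bf A}^{r/2}x\|\ci{\cH}^2$, while by the definition of the $r/2$ graph norm recorded just before the statement, $\|x\|\ci{{\bf A}^{r/2}}^2=\|x\|\ci{\cH}^2+\|{\bf A}^{r/2}x\|\ci{\cH}^2$. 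Throughout I would work on the common domain $\cV_r=\cD({\bf A}^{r/2})$, where all three quantities are finite, so that the comparison makes sense.

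With this bookkeeping the lower bound is immediate: since $\|x\|\ci{\cH}^2\ge 0$, I would simply write $\|x\|_r^2=\|{\bf A}^{r/2}x\|\ci{\cH}^2\le \|x\|\ci{\cH}^2+\|{\bf A}^{r/2}x\|\ci{\cH}^2=\|x\|\ci{{\bf A}^{r/2}}^2$. This half uses nothing beyond positivity and holds for every $x\in\cV_r$.

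For the upper bound I would invoke property (4) listed after Definition \ref{t-ldinpro}, namely $\langle x,x\rangle_r\ge k^r\langle x,x\rangle$, which in the present notation reads $\|{\bf A}^{r/2}x\|\ci{\cH}^2\ge k^r\|x\|\ci{\cH}^2$. This is exactly the point where the semi-bounded hypothesis ${\bf A}\ge kI$ with $k>0$ is used: via the spectral theorem the spectrum of ${\bf A}$ lies in $[k,\infty)$, hence that of ${\bf A}^{r/2}$ in $[k^{r/2},\infty)$, giving the lower spectral bound. Rearranging yields $\|x\|\ci{\cH}^2\le k^{-r}\|x\|_r^2$, and therefore
\[
\|x\|\ci{{\bf A}^{r/2}}^2=\|x\|\ci{\cH}^2+\|{\bf A}^{r/2}x\|\ci{\cH}^2\le k^{-r}\|x\|_r^2+\|x\|_r^2=(1+k^{-r})\|x\|_r^2 .
\]
Thus $C=1+k^{-r}$ works, and it depends only on $k$ and $r$, as claimed.

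There is no genuine obstacle here; the entire content is packaged in property (4), whose positivity ($k>0$) is what forces $C$ to be finite. The only thing to be careful about is to state both inequalities on the shared domain $\cV_r=\cD({\bf A}^{r/2})$ and to note that if $k\le 0$ the argument breaks down, which is why the standing assumption $k>0$ from Subsection \ref{s-leftdef} is essential.
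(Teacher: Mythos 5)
Your proof is correct and follows essentially the same route as the paper: both halves rest on the decomposition $\|x\|\ci{{\bf A}^{r/2}}^2=\|x\|\ci{\cH}^2+\|{\bf A}^{r/2}x\|\ci{\cH}^2$ together with property (4), $\langle x,x\rangle_r\ge k^r\langle x,x\rangle$. The only difference is cosmetic arithmetic --- you bound $\|x\|\ci{\cH}^2\le k^{-r}\|x\|_r^2$ directly and get $C=1+k^{-r}$, while the paper splits $\|x\|_r^2$ into two halves and gets $C=2/\min\{k^r,1\}$; both constants depend only on $k$ and $r$.
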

\begin{proof}
The $r$th left-definite inner product can be defined via Theorem \ref{t-ldinpro} as $||x||^2_r=\langle x,x\rangle_r=\langle {\bf A}^{r/2}x,{\bf A}^{r/2}x\rangle\ci{\cH}$. Also, recall that the definition of the $r$th left-definite space implied the stipulation $\langle x,x\rangle_r\geq k^r\langle x,x\rangle\ci{\cH}$. Then,
\begin{align*}
||x||^2\ci{{\bf A}^{r/2}}=\langle {\bf A}^{r/2}x,{\bf A}^{r/2}x\rangle\ci{\cH} +\langle x,x\rangle\ci{\cH}=||x||^2_r+||x||^2\ci{\cH}\geq ||x||^2_r,
\end{align*}
and
\begin{align*}
||x||^2_r&
=\frac{1}{2}\langle x,x\rangle_r+\frac{1}{2}\langle x,x\rangle_r
\\&
\geq \frac{k^r}{2}\langle x,x\rangle\ci{\cH} +\frac{1}{2}\langle x,x\rangle_r \\
&=\frac{1}{2}\left[k^r\langle x,x\rangle\ci{\cH} +\langle {\bf A}^{r/2}x,{\bf A}^{r/2}x\rangle\ci{\cH}\right] \\
&
\ge \frac{1}{2}\min\{k^r, 1\}||x||^2\ci{{\bf A}^{r/2}}.
\end{align*}
Furthermore, no problems arise by passing to the $r/2$ graph norm instead of the usual $r$ graph norm. This is because the domain of the $r/2$ graph norm coincides with the $r/2$th left-definite space and $\cD ({\bf A}_{r/2})\supset\cD ({\bf A}_r)$. We conclude that the two norms are indeed equivalent.
\end{proof}

\section{Glazman--Krein--Naimark (GKN) Conditions using Eigenfunctions}\label{s-framework}
Consider a symmetric expression $\ell$ with deficiency indices $(m,m)$ on the Hilbert space $L^2[(a,b),w]$. Let ${\bf L} = \{\ell,\cD_{\bf L}\}$ be a self-adjoint extension. 
Assume that the domain of ${\bf L}$ includes a complete set of orthogonal eigenfunctions, say $\{P_k\}_{k=0}^{\infty}$.
The GKN1 Theorem (Theorem \ref{t-gkn1}) states that all self-adjoint extensions are obtained by imposing $m$ GKN conditions on the maximal domain. GKN conditions are induced by functions $w_1,\dots,w_{m}$, which satisfy three conditions:
\begin{enumerate}
\item[(C1)] The functions $w_1,\dots,w_{m}$ must be linearly independent modulo the minimal domain.
\item[(C2)] The complete system of orthogonal eigenfunctions must be included in the domain, pursuant to equation \eqref{e-gkn1a}.
\item[(C3)] The functions $w_1,\dots,w_{m}$ must satisfy the Glazman symmetry conditions in equation \eqref{e-gkn1b}.
\end{enumerate}
Later we will choose $w_1,\dots,w_{m}$ to be eigenfunctions. In that case, the first item (C1) implies both (C2) and (C3).

\begin{rem}\label{r-GKNsymmetry}
In fact, item (C3) can in general be obtained from (C1), if we allow for an insignificant modification of the $w_1,\dots,w_{m}$. Indeed, the Glazman symmetry conditions (C3) are easily attained by taking linear combinations of vectors satisfying (C1) via a procedure that is similar to the Gram--Schmidt orthogonalization, using the sesquilinear form instead of an inner product. We notice that such linear combinations will not change (C1). They will span the same domain modulo $\cD\ti{min}$. So they will also not change property (C2).
\end{rem}

By assumption we have
$
\dim (\cD_+\dotplus\cD_-)
=
2m.$
A basis of this space $\text{mod}\,(\cD\ti{min})$ would be ideal. But, if $w_1,\dots,w_{m}$ are linearly independent modulo the minimal domain, then they can be completed to a basis $w_1,\dots,w_{2m}$; and vice versa.

Consider the matrix
\begin{align}\label{e-M}
\sbox0{$\begin{matrix}[w_1,w_1] & \dots & [w_1,w_{m}] \\ \vdots & \ddots & \vdots \\ [w_{m},w_1] & \dots & [w_{m},w_{m}]\end{matrix}$}
\sbox1{$\begin{matrix}[w_1,w_{{m}+1}] & \dots & [w_1,w_{2{m}}] \\ \vdots & \ddots & \vdots \\ [w_{m},w_{{m}+1}] & \dots & [w_{m},w_{2{m}}]\end{matrix}$}
\sbox2{$\begin{matrix}[w_{{m}+1},w_1] & \dots & [w_{{m}+1},w_{m}] \\ \vdots & \ddots & \vdots \\ [w_{2{m}},w_1] & \dots & [w_{2{m}},w_{m}]\end{matrix}$}
\sbox3{$\begin{matrix}[w_{{m}+1},w_{{m}+1}] & \dots & [w_{{m}+1},w_{2{m}}] \\ \vdots & \ddots & \vdots \\ [w_{2{m}},w_{{m}+1}] & \dots & [w_{2{m}},w_{2{m}}]\end{matrix}$}
{\bf M}=\left(
\begin{array}{c|c}
\usebox{0}&\usebox{1}\vspace{-.35cm}\\\\
\hline\vspace{-.35cm}\\
 \vphantom{\usebox{0}}\usebox{2}&\usebox{3}
\end{array}
\right),
\end{align}
where each sesquilinear form is evaluated from $a$ to $b$.

We formulate sufficient conditions under which eigenfunctions act as GKN conditions. Though the set up seems less natural, the result will be shown to bear useful consequences. The theorem shows that, under the assumption that eigenfunctions $\{P_{k_i}\}_{i=1}^{m}$ can be completed to a basis of $\cD_+\dotplus\cD_-$, these eigenfunctions are then appropriate GKN conditions.

\begin{theo}\label{t-eigenfunctions}
Let ${\bf L} = \{\ell,\cD_{\bf L}\}$ be a self-adjoint operator on the Hilbert space $L^2[(a,b),w]$, and be an extension of a minimal symmetric operator that has deficiency indices $(m,m)$. Assume that $\cD_{\bf L}$ includes a complete set of orthogonal eigenfunctions, $\{P_k\}_{k=0}^{\infty}$. Furthermore, assume that a basis modulo $\cD\ti{min}$ of the defect spaces $\cD_+\dotplus\cD_-$ is given by the collection $\{P_{k_1},\dots,P_{k_m},f_1,\dots,f_m\}$.

Then $\cD_{\bf L}$ is given by imposing $\{P_{k_1},\dots,P_{k_m}\}$ as GKN conditions on $\cD\ti{max}$.
\end{theo}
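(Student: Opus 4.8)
The plan is to show that the eigenfunctions $P_{k_1},\dots,P_{k_m}$ qualify as legitimate GKN data for the expression $\ell$, so that Theorem \ref{t-gkn2} (GKN2) produces a self-adjoint extension $\cD'$, and then to identify $\cD'$ with $\cD_{\bf L}$. Concretely, I would set $w_i:=P_{k_i}$ and verify the two structural hypotheses (C1) and (C3), apply GKN2, and finally pin down $\cD'$ using an inclusion together with the rigidity of self-adjoint extensions.

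First I would check the hypotheses for the choice $w_i=P_{k_i}$. Condition (C1), linear independence modulo $\cD\ti{min}$, is immediate from the assumption: the collection $\{P_{k_1},\dots,P_{k_m},f_1,\dots,f_m\}$ is a basis of $\cD_+\dotplus\cD_-$ modulo $\cD\ti{min}$, and any subset of a basis, in particular the first $m$ vectors, remains linearly independent modulo $\cD\ti{min}$ (so no recourse to the matrix test of Proposition \ref{p-LI} is needed). Condition (C3), the Glazman symmetry relations $[P_{k_i},P_{k_j}]\big|_a^b=0$, follows from self-adjointness of ${\bf L}$: Green's formula \eqref{e-greens} reads $[f,g]\big|_a^b=\langle\ell f,g\rangle-\langle f,\ell g\rangle$, and since $P_{k_i},P_{k_j}\in\cD_{\bf L}$ with ${\bf L}$ symmetric on its domain, we have $\langle{\bf L}P_{k_i},P_{k_j}\rangle=\langle P_{k_i},{\bf L}P_{k_j}\rangle$, forcing the boundary form to vanish (alternatively one could invoke Remark \ref{r-GKNsymmetry}, but here symmetry is automatic). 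With (C1) and (C3) established, Theorem \ref{t-gkn2} guarantees that $\cD':=\{f\in\cD\ti{max}\mid [f,P_{k_i}]\big|_a^b=0,\ i=1,\dots,m\}$ is the domain of a self-adjoint extension ${\bf L}'=\{\ell,\cD'\}$ of ${\bf L}\ti{min}$.

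Next I would prove the inclusion $\cD_{\bf L}\subseteq\cD'$. For any $f\in\cD_{\bf L}$ each $P_{k_i}$ also lies in $\cD_{\bf L}$, so the identical Green's-formula computation gives $[f,P_{k_i}]\big|_a^b=0$ for all $i$; hence $f\in\cD'$. Thus ${\bf L}\subseteq{\bf L}'$ as operators, both being restrictions of ${\bf L}\ti{max}$ through the single expression $\ell$. To upgrade this to equality I would use the standard rigidity fact: if two self-adjoint operators satisfy ${\bf L}\subseteq{\bf L}'$, then adjoining reverses the inclusion, $({\bf L}')^*\subseteq{\bf L}^*$, and self-adjointness yields ${\bf L}'\subseteq{\bf L}$, whence $\cD'=\cD_{\bf L}$. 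Equivalently, by the finite-dimensional von Neumann decomposition \eqref{e-vN} both domains occupy exactly $m$ dimensions modulo $\cD\ti{min}$ inside the $2m$-dimensional space $\cD_+\dotplus\cD_-$, so the inclusion already forces equality. This shows $\cD_{\bf L}$ is precisely the domain cut out by imposing $\{P_{k_1},\dots,P_{k_m}\}$ as GKN conditions.

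The step requiring genuine care is the symmetry verification (C3): one must use that the eigenfunctions belong to the \emph{self-adjoint} domain $\cD_{\bf L}$, not merely that they are eigenfunctions of ${\bf L}\ti{max}$, since it is exactly this membership that makes the boundary forms $[P_{k_i},P_{k_j}]\big|_a^b$ vanish and thereby certifies the $w_i$ as valid Glazman data. Everything else is bookkeeping with the finite-dimensional structure of the defect spaces.
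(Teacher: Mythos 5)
Your proof is correct, but it takes a genuinely different route from the paper's. The paper argues ``forward'' from GKN1: since $\cD_{\bf L}$ is self-adjoint, it must be cut out by conditions $G_i$ that are linear combinations of the full basis $\{P_{k_1},\dots,P_{k_m},f_1,\dots,f_m\}$; testing each $G_i$ against the eigenfunctions $P_{k_j}$ (whose mutual boundary forms vanish by \eqref{e-orthoeigens}) produces a homogeneous linear system whose coefficient matrix is the upper-right quadrant of ${\bf M}$ from \eqref{e-M}, and the basis hypothesis forces that quadrant to have full rank, so the coefficients on the $f_j$'s vanish and the $G_i$ span the same space as the $P_{k_i}$ modulo $\cD\ti{min}$. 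You instead argue ``backward'' from GKN2: the $P_{k_i}$ are legitimate Glazman data (linear independence from the basis hypothesis, symmetry from self-adjointness of ${\bf L}$ via Green's formula), GKN2 yields a self-adjoint ${\bf L}'$, the inclusion ${\bf L}\subseteq{\bf L}'$ follows from Green's formula on $\cD_{\bf L}$, and rigidity of self-adjoint operators (${\bf L}'=({\bf L}')^*\subseteq{\bf L}^*={\bf L}$) closes the gap. This is essentially the strategy the paper deploys later, in the proof of the general theorem of Section \ref{s-results}, rather than here. Two observations on the trade-off: your argument uses strictly less than the stated hypothesis --- only linear independence of $\{P_{k_1},\dots,P_{k_m}\}$ modulo $\cD\ti{min}$, never the completion by the $f_j$'s --- so it proves a marginally stronger statement; on the other hand, the paper's matrix argument is not wasted effort in context, since the full-rank-of-the-quadrant mechanism is exactly what gets reused in Corollaries \ref{c-last} and \ref{c-fullrank} and in the explicit Legendre computations of Section \ref{s-legendre}, where one must \emph{verify} rather than \emph{assume} the basis property.
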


Before we prove this result, we take an excursion via two corollaries.

In combination with Corollary \ref{corollary}, a slight modification of the proof of Theorem \ref{t-eigenfunctions} immediately yields a similar result for general $w_1,\dots,w_{2m}$ possibly apart from the less significant (as was explained in Remark \ref{r-GKNsymmetry}) GKN conditions (C2).

\begin{cor}\label{c-last}
Consider a symmetric operator with expression $\ell$ on $L^2[(a,b),w]$ that has deficiency indices $(m,m)$.
If ${\bf M}$ defined as in equation \eqref{e-M} has full rank for some choice of $w_1,\dots,w_{2m}\in \cD\ti{max}(\ell)$, then any subset of $m$ of these induces GKN conditions so long as we drop the symmetry condition \emph{(C2)}.
\end{cor}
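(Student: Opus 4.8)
The plan is to separate the one genuinely new ingredient of the corollary, namely condition (C1), from the two conditions (C2) and (C3) that are either inherited from the eigenfunction setting or deliberately discarded. First I would establish (C1). Taking $s=2m$ in Corollary~\ref{corollary}, the hypothesis that the full $2m\times 2m$ matrix ${\bf M}$ of \eqref{e-M} has full rank yields immediately that $w_1,\dots,w_{2m}$ are linearly independent modulo $\cD\ti{min}$. A subset of a collection that is linearly independent modulo $\cD\ti{min}$ is again linearly independent modulo $\cD\ti{min}$: in Definition~\ref{d-linind} one simply assigns the coefficient $0$ to each omitted vector and applies independence of the full collection. Hence every choice of $m$ of the $w_i$ satisfies (C1). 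This is the entire use of the full-rank hypothesis, and it is precisely the ``slight modification'' of the proof of Theorem~\ref{t-eigenfunctions}: there linear independence modulo $\cD\ti{min}$ came from the functions forming part of a prescribed basis of $\cD_+\dotplus\cD_-$, whereas here it is supplied by Corollary~\ref{corollary}.

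Next I would explain the fate of (C2) and (C3). In Theorem~\ref{t-eigenfunctions} the selected functions were eigenfunctions lying inside the self-adjoint domain $\cD_{\bf L}$, on which the boundary form $[\,\cdot\,,\,\cdot\,]|_a^b$ vanishes identically; this alone forced both the Glazman symmetry conditions \eqref{e-gkn1b} and the inclusion of the full eigenfunction system to hold automatically. For an arbitrary collection $w_1,\dots,w_{2m}\in\cD\ti{max}(\ell)$ there is no ambient self-adjoint domain, so neither (C2) nor (C3) is available a priori. This is exactly the meaning of ``dropping the symmetry condition'': the corollary asserts only (C1) for the chosen $m$ functions. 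Should one still wish to feed these data into GKN2 (Theorem~\ref{t-gkn2}) to obtain a self-adjoint extension, one would appeal to the Gram--Schmidt-type procedure of Remark~\ref{r-GKNsymmetry}, subject to the caveat below.

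The step that requires care is therefore the status of the symmetry condition, not the linear algebra. On the quotient $(\cD_+\dotplus\cD_-)/\cD\ti{min}$, which has dimension $2m$, the boundary form $[\,\cdot\,,\,\cdot\,]|_a^b$ is non-degenerate and its maximal isotropic (Lagrangian) subspaces have dimension $m$; a generic $m$-dimensional subspace is not isotropic. Consequently, if a chosen subset of $m$ of the $w_i$ spans a non-isotropic subspace modulo $\cD\ti{min}$, the relations \eqref{e-gkn1b} cannot be met by any combinations taken within that span, and restoring (C3) forces one to pass---via Remark~\ref{r-GKNsymmetry} applied to the full collection---to a Lagrangian subspace, which in general alters the span. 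I expect this to be the only conceptual obstacle; once it is acknowledged, the substantive content of the corollary, that (C1) holds for every subset of $m$ of the $w_i$, follows at once from Corollary~\ref{corollary}.
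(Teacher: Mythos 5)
Your proposal is correct and follows essentially the same route as the paper: the full-rank hypothesis is fed into Corollary~\ref{corollary} (with $s=2m$) to get linear independence modulo $\cD\ti{min}$ of the whole collection and hence of any $m$-element subset, which is condition (C1), while the symmetry and eigenfunction-inclusion requirements are simply dropped, exactly as the statement of Corollary~\ref{c-last} allows---this mirrors the paper's one-line argument combining Corollary~\ref{corollary} with a ``slight modification'' of the proof of Theorem~\ref{t-eigenfunctions}. Your closing caveat, that the Gram--Schmidt-type repair of Remark~\ref{r-GKNsymmetry} cannot render a non-isotropic $m$-dimensional span isotropic without altering the span, is a sound observation that the paper glosses over, but it does not affect the corollary as stated, since the symmetry condition is explicitly waived.
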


In Corollary \ref{c-last}, we do not claim that these conditions induce a particular self-adjoint extension, but rather just one of the infinitely many possible ones, again, not expecting the GKN symmetry condition (C2).

In the next section, we will apply another immediate consequence of the Theorem to powers of a self-adjoint extension associated with the Legendre expression. Recall that we work with a symmetric expression $\ell$ with deficiency indices $(m,m)$ on the Hilbert space $L^2[(a,b),w]$, and with a self-adjoint extension ${\bf L} = \{\ell,\cD_{\bf L}\}$. 
Recall that we assume that the domain of ${\bf L}$ includes a complete set of orthogonal eigenfunctions, say $\{P_k\}_{k=0}^{\infty}$.
Assuming that ${\bf L}$ is bounded below, we can consider the self-adjoint operator (associated with the differential expression $\ell^n$) that arises from the $2n$th left-definite domain for ${\bf L}$. 
Consider the matrix ${\bf M}_{nm}$ where the sesquilinear forms $[\fdot,\fdot]$ are the ones corresponding to $\ell^n$, which we generally denote by $[\fdot,\fdot]_n$. 

\begin{cor}\label{c-fullrank}
Assume that $nm$ of the $w_1, \hdots, w_{2nm}$ functions are eigenfunctions (e.g.~$w_k=P_k$ for $k=1,\hdots, nm$). If the corresponding $(2nm)\times (2nm)$ matrix ${\bf M}$ from equation \eqref{e-M} has full rank, then the domain of the $2n$th left-definite operator can be represented by imposing GKN conditions with those eigenfunctions on the maximal domain. 
\end{cor}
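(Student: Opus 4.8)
The plan is to recognize this corollary as a direct application of Theorem \ref{t-eigenfunctions}, with the expression $\ell$ replaced by $\ell^n$ and the deficiency index $m$ replaced by $nm$. The first task is to pin down the relevant self-adjoint operator. Since ${\bf L}$ is self-adjoint and bounded below (by $k>0$, after a shift if necessary), its power ${\bf L}^n$ is again self-adjoint, and by Corollary \ref{t-comppower} its domain $\cD({\bf L}^n)=\cV_{2n}$ is precisely the $2n$th left-definite domain. This operator acts through the expression $\ell^n$, so it is a self-adjoint extension of the minimal operator ${\bf L}^n\ti{min}=\{\ell^n,\cD^n\ti{min}\}$, whose deficiency indices are $(nm,nm)$ as recorded in the notation section.

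Next I would confirm that $\cD({\bf L}^n)$ contains a complete orthogonal set of eigenfunctions. Each $P_k$ is an eigenfunction of ${\bf L}$ with some eigenvalue $\lambda_k$, hence an eigenfunction of ${\bf L}^n$ with eigenvalue $\lambda_k^n$; consequently $\{P_k\}_{k=0}^{\infty}\subset\cD({\bf L}^n)$, and this family is orthogonal and complete in $L^2[(a,b),w]$ just as it was for ${\bf L}$. That these eigenfunctions also persist in the left-definite setting is exactly the content of Theorem \ref{t-leftdefortho}. This places us squarely in the hypotheses of Theorem \ref{t-eigenfunctions} for the expression $\ell^n$.

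It then remains only to supply the basis hypothesis of Theorem \ref{t-eigenfunctions}. Here I would invoke Corollary \ref{corollary}: the assumption that the $(2nm)\times(2nm)$ matrix ${\bf M}$ of sesquilinear forms $[\fdot,\fdot]_n$ from \eqref{e-M} has full rank forces $w_1,\dots,w_{2nm}$ to be linearly independent modulo $\cD^n\ti{min}$. By von Neumann's formula \eqref{e-vN}, the quotient $\cD^n\ti{max}/\cD^n\ti{min}$ has dimension $\dim(\cD^n_+\dotplus\cD^n_-)=2nm$, so these $2nm$ functions form a basis modulo $\cD^n\ti{min}$ of $\cD^n_+\dotplus\cD^n_-$. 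The $nm$ eigenfunctions among the $w_k$ then play the role of $P_{k_1},\dots,P_{k_{nm}}$, and the remaining $nm$ functions the role of $f_1,\dots,f_{nm}$, in Theorem \ref{t-eigenfunctions}. Applying that theorem (with $\ell\to\ell^n$ and $m\to nm$) yields the conclusion that $\cD({\bf L}^n)$ is obtained by imposing the eigenfunctions as GKN conditions on $\cD^n\ti{max}$.

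The only genuinely delicate point is the first step: verifying that the operator produced abstractly by the left-definite construction really coincides, as an operator acting by $\ell^n$ on $L^2[(a,b),w]$, with a self-adjoint extension of ${\bf L}^n\ti{min}$ carrying the complete eigensystem. Once Corollary \ref{t-comppower} is used to identify the domain as $\cV_{2n}=\cD({\bf L}^n)$ and Theorem \ref{t-leftdefortho} to transport the eigenfunctions, everything else is bookkeeping and the corollary follows immediately from the Theorem, exactly as advertised.
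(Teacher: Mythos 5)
Your proposal is correct and takes essentially the same route as the paper: Corollary \ref{c-fullrank} is presented there as an immediate consequence of Theorem \ref{t-eigenfunctions} together with Corollary \ref{corollary}, which is exactly the chain you spell out (full rank of ${\bf M}$ $\Rightarrow$ linear independence modulo $\cD^n\ti{min}$ $\Rightarrow$ a basis of the $2nm$-dimensional quotient $\Rightarrow$ Theorem \ref{t-eigenfunctions} applied with $\ell^n$ and deficiency indices $(nm,nm)$). Your preliminary identification of the $2n$th left-definite domain with $\cD({\bf L}^n)=\cV_{2n}$ via Corollary \ref{t-comppower}, and the persistence of the complete orthogonal eigensystem in $\cD({\bf L}^n)$ via Theorem \ref{t-leftdefortho}, is precisely the bookkeeping the paper performs in the paragraph preceding the corollary.
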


Examples in Section \ref{s-legendre} show that, for small values of $n$, the first ${m}$ eigenfunctions can in fact serve as half of these basis vectors. Therefore, they can be used to form GKN conditions.

In accordance with equation \eqref{e-orthoeigens} below, orthogonal functions which also satisfy the eigenvalue equation, are automatically symmetric in the sense of Glazman's condition (C2).
So, the entire problem of imposing appropriate GKN conditions on symmetric operators to yield left-definite self-adjoint extensions is reduced to showing that the upper-right quadrant of the matrix ${\bf M}$ has full rank. We explore this further below in Proposition \ref{p-matrix}.

\begin{proof}[Proof of Theorem \ref{t-eigenfunctions}]
The collection $\{P_{k_1},\dots,P_{k_m},f_1,\dots,f_m\}$ forms a basis of $\cD_+\dotplus\cD_-$, so all self-adjoint extensions of the minimal operator come from using $m$ distinct GKN conditions written as:
\begin{align*}
G_i=a_{i,1}P_{k_1}+a_{i,2}P_{k_2}+\dots+a_{i,m}P_{k_m}+a_{i,m+1}f_1+a_{i,m+2}f_2+\dots+a_{i,2m}f_m,
\end{align*}
for $i=1,\dots,m$. The GKN1 Theorem \ref{t-gkn1} implies that 
\begin{align*}
\cD_{\bf L}=\{f\in\cD\ti{max} ~|~ [f,G_i]|_a^b=0, ~i=1,\dots,m \text{ for some choice of } a_{i,j}\text{'s}\}.
\end{align*}

The claim is then that $a_{i,j}=0$ for all $j>m$. In particular, this choice of constants $a_{i,j}$'s needs to include the subset of orthogonal eigenfunctions $\{P_{k_1},\dots,P_{k_m}\}$ in the domain. Notice that an application of Green's formula for the sesquilinear form yields 
\begin{align}\label{e-orthoeigens}
[P_i,P_j]\bigg|_a^b&=\int_a^b\ell[P_i]P_j wdx-\int_a^bP_i \ell[P_j]wdx
=(\lambda_i-\lambda_j)\int_a^bP_i P_j wdx=0.
\end{align}
Here, if $i=j$ then $\lambda_i-\lambda_j=0$, and if $i\neq j$ then we use the orthogonality of $P_i$ and $P_j$.

The Glazman symmetry conditions (C3) follow immediately.

Fix the index $i$ and test the chosen $G_i$ against these orthogonal eigenfunctions in the sesquilinear form as follows:
\begin{align*}
0=[P_{k_1},G_1]|_a^b=0+\dots+0+&a_{i,m+1}[P_{k_1},f_1]|_a^b+\dots+a_{i,2m}[P_{k_1},f_m]|_a^b, \\
0=[P_{k_2},G_1]|_a^b=0+\dots+0+&a_{i,m+1}[P_{k_2},f_1]|_a^b+\dots+a_{i,2m}[P_{k_2},f_m]|_a^b, \\
&\vdots \\
0=[P_{k_m},G_1]|_a^b=0+\dots+0+&a_{i,m+1}[P_{k_m},f_1]|_a^b+\dots+a_{i,2m}[P_{k_m},f_m]|_a^b.
\end{align*}
This problem can be recast in terms of the upper half of the finite $2m\times 2m$ matrix ${\bf M}$ as above. The upper-left quadrant of ${\bf M}$ is then $0$. Explicitly, this problem represents the upper-right quadrant of ${\bf M}$ multiplied by a column vector of $a_{i,j}$'s. However, the collection $\{P_{k_1},\dots,P_{k_m},f_1,\dots,f_m\}$ constitutes a basis for $\cD_+\dotplus\cD_-$. Hence, the upper-right quadrant of ${\bf M}$ has full rank. The only way to yield the necessary column vectors of $0$'s, in the above equations, is for all of the $a_{i,j}$'s to be $0$. Hence, $a_{i,j}=0$ for all $j>m$. The calculation was for a general fixed $i$ so necessarily
\begin{align*}
G_i=a_{i,1}P_{k_1}+a_{i,2}P_{k_2}+\dots+a_{i,m}P_{k_m} \text{ for all }i=1,\dots,m.
\end{align*}
The $G_i$'s themselves must also be linearly independent modulo the minimal domain. So, modulo $\mathcal{D}\ti{min}$, their span is identical to that of $\{P_{k_1}, \hdots , P_{k_m}\}$.
\end{proof}

\section{GKN Conditions for Powers of Legendre}\label{s-legendre}
When considering a specific differential operator, the problem of finding GKN conditions can be written rather explicitly in terms of the matrix ${\bf M}$ from the previous section.

The explicit tools developed here can be adapted to study spectral theory for powers of other Bochner--Krall polynomial systems.
Here we focus our attention on the powers of the Legendre operator.

On the one hand, these examples expand the observations in \cite{LW15}. On the other hand, they motivate and tie into our main results, see Section \ref{s-results}. The method established here is more explicit than the abstract approach in Section \ref{s-results}.

The investigation into the boundary conditions associated with left-definite theory begins by considering the classical Legendre differential operator ${\bf L}=\{\ell, \cD_{\bf L}\}$ on the Hilbert space $L^2(-1,1)$, given by
\begin{align}\label{e-legendre}
\ell[y](x)=-((1-x^2)y'(x))'
\end{align}
together with the domain
\begin{align}\label{e-defect11}
\cD_{\bf L} &= \{f\in \cD\ti{max};
(1-x^2)f'(x)\mid_{-1}^1
= 0
\},
\end{align}
which contains the Legendre polynomials $\{P_k\}_{k=0}^\infty$, see e.g.~\cite{LWOG}. Recall that $ \cD\ti{max}$ was provided in Definition \ref{d-max}. Both $\lim_{x\to\pm1}(1-x^2)f'(x)$ exist by Theorem \ref{t-limits}, and because $(1-x^2)f'(x)\mid_{-1}^1 = [f,1]\mid_{-1}^1$.

\begin{rem*}
In \cite{LWOG}, it was proved that $\cD_{\bf L}$ is equal to the domain induced by left-definite theory. The equality of such domains for ${\bf L}^n$ is discussed in Section \ref{s-conj}.
\end{rem*}

This operator possesses the Legendre polynomials $P_k(x)$, $k\in\NN_0$, as a complete set of eigenfunctions. That is, the polynomial $y(x)=P_k(x)$ is a solution of the eigenvalue equation 
\begin{align}\label{e-eigen}
\ell[y](x)=k(k+1)y(x),
\end{align}
for each $k$ we have $P_k\in \cD_{\bf L}$ and $\spa\{P_k\}_{k=0}^\infty$ is dense in $L^2(-1,1)$.

Left-definite theory allows for the construction of a sequence of Hilbert spaces whose domains are operated on by integer composition powers of ${\bf L}$. It is no hindrance that for odd powers of composition, we encounter fractional left-definite spaces, e.g.~when $n=3$ the operator ${\bf L}^3$ corresponds to $\cV_{3/2}$. The case $n=2$ has been investigated by Littlejohn and Wicks in \cite{LW15} and \cite{LWOG}. They showed that the left-definite domain in this case could be defined via the GKN1 Theorem using $w_1\equiv 1$ and $w_2(x)=x$ in equation \eqref{e-gkn1a}. The literature does not provide results for $n\ge 3$.

Since the minimal Legendre operator ${\bf L}\ti{min} = (\ell, \cD\ti{min})$ has deficiency indices $(1,1)$ (as is visible from \eqref{e-defect11}), the powers ${\bf L}^n\ti{min} = (\ell^n, \cD\ti{min}^n)$ of the Legendre operator have deficiency indices $(n,n)$. In other words, we have $m=n$ here.

In the remainder of this section we present a few results for general $n$, while focusing on some special cases for more explicit results. We are mostly interested in $n=3$, but also work with $n=4$ and $n=5$, and include some new observations when $n=2$.

\subsection{The structure of the sesquilinear matrix ${\bf M}_n$}
These explicit results can be extended by analogy to larger values of $n$. For general $n$, the matrix ${\bf M}_n$ will be $(2n)\times (2n)$ and the entries are sesquilinear forms corresponding to expression $\ell^n$ in Green's formula. If $n$ is even, choose $P_0,P_1,\hdots,P_{n-1}$ and $Q_0,Q_1,\hdots,Q_{n-1}$ (Legendre functions of the second kind). If $n$ is odd, choose $P_0,P_1,\hdots,P_{n-1}$ and $Q_1,Q_2,\hdots,Q_n$. These will be the choice for basis candidates unless otherwise stated. In the following example we explain why these are good choices.

\begin{ex*}
Let $n=3$. The first two Legendre polynomials are $P_0\equiv 1$ and $P_1(x)=x$, so a reasonable guess in this case would be to try to use as GKN conditions
\[
P_0\equiv 1,\quad
P_1(x)=x,\quad
P_2(x)=\frac{1}{2}(3x^2-1).
\]
It will be shown that these GKN conditions indeed yield the desired domain. The most difficult condition to prove is the linear independence modulo the minimal domain. 
For $n=3$, the deficiency indices are $(3,3)$. So we have $m=n=3$ and a basis of $\cD_+^3\dotplus\cD^3_-$ will be $2n=6$ dimensional.

To show that a set $w_1,w_2,\dots,w_6$ is linearly independent modulo $\cD\ti{min}^3$, follow the setup of the matrix described in Section \ref{s-framework} to yield
\[
\sbox0{$\begin{matrix}[w_1,w_1] & [w_1,w_2] & [w_1,w_3] \\ [w_2,w_1] & [w_2,w_2] & [w_2,w_3] \\ [w_3,w_1] & [w_3,w_2] & [w_3,w_3]\end{matrix}$}
\sbox1{$\begin{matrix}[w_1,w_4] & [w_1,w_5] & [w_1,w_6] \\ [w_2,w_4] & [w_2,w_5] & [w_2,w_6] \\ [w_3,w_4] & [w_3,w_5] & [w_3,w_6]\end{matrix}$}
\sbox2{$\begin{matrix}[w_4,w_1] & [w_4,w_2] & [w_4,w_3] \\ [w_5,w_1] & [w_5,w_2] & [w_5,w_3] \\ [w_6,w_1] & [w_6,w_2] & [w_6,w_3]\end{matrix}$}
\sbox3{$\begin{matrix}[w_4,w_4] & [w_4,w_5] & [w_4,w_6] \\ [w_5,w_4] & [w_5,w_5] & [w_5,w_6] \\ [w_6,w_4] & [w_6,w_5] & [w_6,w_6]\end{matrix}$}
{\bf M}_3=\left(
\begin{array}{c|c}
\usebox{0}&\usebox{1}\vspace{-.35cm}\\\\
\hline\vspace{-.35cm}\\
 \vphantom{\usebox{0}}\usebox{2}&\usebox{3}
\end{array}
\right),
\]
where $[\fdot,\fdot]$ stands for the sesquilinear form in equation \eqref{e-sesqui} below for $n=3$ and is evaluated from $-1$ to $1$ by taking limits.

For one moment, let us assume that all those limits exist. Now the idea is that we will be choosing all of the $w_j$ to be either Legendre eigenfunctions or Legendre functions of the second kind, all of which satisfy eigenvalue equations $\ell[y]=\lambda y$. The representation through Green's formula
\[
[w_j,w_k]\bigg|_{-1}^1=\int_{-1}^1\ell^3[w_j]w_kdx-\int_{-1}^1w_j\ell^3[w_k]dx
\]
will be of use.

The Legendre functions of the second kind are commonly denoted by $Q_k$, $k\in\NN_0$. The explicit representations for the first four of them are:
\begin{align*}
Q_0(x)&=\dfrac{1}{2}\ln\left(\dfrac{1+x}{1-x}\right), \quad
\\
Q_1(x)&=\dfrac{x}{2}\ln\left(\dfrac{1+x}{1-x}\right)-1, \quad
\\
Q_2(x)&=\dfrac{3x^2-1}{4}\ln\left(\dfrac{1+x}{1-x}\right)-\dfrac{3x}{2},\text{ and}
\\
Q_3(x)&=\dfrac{5x^3-3x}{4}\ln\left(\dfrac{1+x}{1-x}\right)-\dfrac{5x^2}{2}+\dfrac{2}{3}.
\end{align*}
More information can be found in \cite{D,P}.

Explicitly, take
\[
w_1=P_0, \quad w_2=P_1, \quad w_3=P_2,\quad \text{and}\quad
w_4=Q_1, \quad w_5=Q_2, \quad w_6=Q_3.
\]
Let us ensure that the limits in these sesquilinear forms are well-defined.
\hfill$\kreuz$
\end{ex*}

We turn back to general $n$. Since $P_k$ are eigenfunctions, they trivially belong to $\cD^n\ti{max}$.
In conjunction with Theorem \ref{t-limits} the following result shows that the limits of the sesquilinear forms $\lim_{x\to -1^+}[w_j,w_k](x)$  and $\lim_{x\to 1^-}[w_j,w_k](x)$ both exist when $w_j$ and $w_k$ are eigenfunctions or polynomials of the second kind.

\begin{prop}\label{p-SecondMax}
The Legendre functions of the second kind are in the maximal domain $\cD\ti{max}^n$ for every value of $n$.
\end{prop}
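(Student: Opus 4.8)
The plan is to exploit the fact that each $Q_k$ is an eigenfunction of the Legendre expression $\ell$, so that the seemingly $n$-dependent membership $Q_k\in\cD\ti{max}^n$ collapses to a single, $n$-independent condition. Concretely, $Q_k$ is the second linearly independent solution of the Legendre equation $\ell[Q_k]=k(k+1)Q_k$ on $(-1,1)$, whence $\ell^n[Q_k]=(k(k+1))^nQ_k$. Therefore $\ell^n[Q_k]\in L^2(-1,1)$ if and only if $Q_k\in L^2(-1,1)$, and it remains only to verify the two other requirements of Definition \ref{d-max}: local absolute continuity of $Q_k^{(j)}$ for $j=0,\dots,2n-1$, and square-integrability of $Q_k$ itself.

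For the smoothness requirement I would note that on the open interval $(-1,1)$ the leading coefficient $1-x^2$ of $\ell$ does not vanish, so the Legendre equation is a regular second-order linear ODE with real-analytic coefficients there. Hence $Q_k$ is real-analytic, and in particular $C^\infty$, on $(-1,1)$; all of its derivatives are continuous and therefore locally absolutely continuous on every compact subinterval of $(-1,1)$. This settles the $\text{AC}\ti{loc}$ conditions for all orders $j$ at once, independently of $n$.

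For square-integrability I would use the standard structural representation $Q_k=P_kQ_0-W_{k-1}$, where $W_{k-1}$ is a polynomial of degree $k-1$ and $Q_0(x)=\tfrac12\ln\big(\tfrac{1+x}{1-x}\big)$ carries the only singular behaviour (this is directly visible from the explicit formulas for $Q_0,\dots,Q_3$ listed above). Near the endpoints the factors $P_k$ and $W_{k-1}$ are bounded, so the integrability of $Q_k$ is governed entirely by that of $Q_0$. Since $Q_0$ has only a logarithmic singularity at $x=\pm1$ and $\int_0^1(\ln t)^2\,dt<\infty$, we obtain $Q_0\in L^2(-1,1)$, and hence $Q_k\in L^2(-1,1)$. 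Combining the three verifications yields $Q_k\in\cD\ti{max}^n$ for every $n$.

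The only genuinely delicate point is the square-integrability near $x=\pm1$; the reduction via the eigenvalue equation and the interior smoothness are both immediate once the eigenfunction property is invoked. I expect the main obstacle, if any, to be confirming that the polynomial prefactor $P_k$ introduces no singularity stronger than logarithmic — but since $P_k$ and $W_{k-1}$ are bounded on $[-1,1]$, the logarithm dominates and the $L^2$ bound follows at once.
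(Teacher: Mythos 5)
Your proof is correct, and its skeleton matches the paper's: both verify the three requirements of Definition \ref{d-max} separately, and both dispose of the condition $\ell^n[Q_k]\in L^2(-1,1)$ by invoking the eigenvalue equation $\ell[Q_k]=k(k+1)Q_k$, which makes the membership question independent of $n$. The genuine difference is in how square-integrability of $Q_k$ itself is established. The paper simply cites the closed-form value of $\int_{-1}^1 (Q_k(x))^2\,dx$ in terms of the digamma function (from the special-function literature), whereas you derive $Q_k\in L^2(-1,1)$ from the structural decomposition $Q_k=P_kQ_0-W_{k-1}$ with $W_{k-1}$ a polynomial, reducing everything to the logarithmic singularity of $Q_0$ and the elementary fact that $\int_0^1(\ln t)^2\,dt<\infty$. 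Your route is self-contained and more robust: it needs no tabulated integral, and the same boundedness-plus-logarithm argument would survive perturbations of the setting where no closed formula is available. The paper's route is shorter on the page but outsources the real content to a reference. Your justification of the smoothness condition via regularity of the ODE on $(-1,1)$ (nonvanishing leading coefficient, analytic coefficients, hence analytic solutions) is also a cleaner argument than the paper's remark that the derivatives of $Q_k$ have singularities only at $\pm 1$, though both deliver the same $\text{AC}\ti{loc}$ conclusion.
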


\begin{proof}
By Definition \ref{d-max} the maximal domain of $\ell^n$ is given by 
\begin{align*}
\cD^n\ti{max}=\big\{y:(-1,1)\to\mathbb{C}~~|~~y^{(k)}\in\text{AC}\ti{loc}(-1,1),~~k=&0,1,\dots,2n-1;\, 
y,\ell^n[y]\in L^2(-1,1)\big\}.
\end{align*}
These functions are in $L^2(-1,1)$ and their integrals are explicitly known \cite{D}:
\begin{align*}
\int_{-1}^1 (Q_k(x))^2dx=\dfrac{\pi^2-2(1+\cos^2(k\pi))\psi'(k+1)}{2(2k+1)},
\end{align*}
where the function $\psi(x)=\Gamma'(x)/\Gamma(x)$ is the so-called digamma function. Derivatives of the functions $Q_k$ have singularities only at $-1$ and $1$, so $Q_k^{(r)}\in C^{\infty}[\alpha,\beta]$ for all $r\in\NN$ and all $[\alpha,\beta]\subset(-1,1)$. Hence, each derivative is itself locally continuously differentiable and locally absolutely continuous. The fact that the functions $Q_k$ are solutions to the eigenvalue equation \eqref{e-eigen} trivially implies that $\ell^n[Q_k]\in L^2(-1,1)$ for all $k\in\NN_0$.
\end{proof}

Initial conclusions can be made about the structure of ${\bf M}_n$ using technical facts about the entries.
In the remainder of this section we will always assume that the first $n$ of the $w_j$ are Legendre polynomials and the others are Legendre functions of the second kind (with indices in $\NN_0$).

\begin{prop}\label{p-matrix}
The matrix ${\bf M}_n$ is antisymmetric and takes the form
\[
{\bf M}_n=\left(
\begin{array}{c|c}
$\large \bf{0}$ & $\large ${\bf B}_n$$
\vspace{-.4cm}\\\\
\hline\vspace{-.3cm}\\
  $\large $-{\bf B}_n^\top $$ & $\large ${\bf C}_n$$
\end{array}
\right)
\]
so that $\det{({\bf M}_n)}=\det{({\bf B}_n^\top {\bf B}_n)}=[\det{({\bf B}_n)}]^2$.
In particular, if ${\bf B}_n$ has full rank, then the $w_j$ used to produce the entries of ${\bf M}_n$ form a basis of $\cD\ti{max}$ modulo $\cD\ti{min}$.
\end{prop}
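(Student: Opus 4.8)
The plan is to prove the three assertions in order --- antisymmetry, vanishing of the upper-left block, and the determinant identity --- and then deduce the full-rank consequence from Proposition~\ref{p-LI}. Throughout I would first record that every function in play, namely the Legendre polynomials $P_k$ and the functions of the second kind $Q_k$, is real-valued on $(-1,1)$, and that all the relevant forms are finite by Theorem~\ref{t-limits} together with Proposition~\ref{p-SecondMax}. Because the functions are real, the conjugations in Green's formula \eqref{e-greens} are inconsequential and
\[
[w_j,w_k]_n\Big|_{-1}^1=\int_{-1}^1\big(\ell^n[w_j]\,w_k-\ell^n[w_k]\,w_j\big)\,dx,
\]
which is manifestly skew under interchange of $j$ and $k$. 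Hence ${\bf M}_n^\top=-{\bf M}_n$, and in particular all diagonal entries vanish.

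Next I would treat the upper-left $n\times n$ block, whose entries $[P_a,P_b]_n\big|_{-1}^1$ involve only the eigenfunctions $P_0,\dots,P_{n-1}$. Since $\ell[P_a]=\lambda_a P_a$ with $\lambda_a=a(a+1)$, iterating gives $\ell^n[P_a]=\lambda_a^n P_a$, so exactly as in \eqref{e-orthoeigens},
\[
[P_a,P_b]_n\Big|_{-1}^1=(\lambda_a^n-\lambda_b^n)\int_{-1}^1 P_aP_b\,dx,
\]
which vanishes: the scalar factor is $0$ when $a=b$, and the integral is $0$ when $a\neq b$ by orthogonality. Thus the upper-left block is ${\bf 0}$, and the antisymmetry already established forces the lower-left block to be $-{\bf B}_n^\top$ (with ${\bf C}_n$ automatically antisymmetric). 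This is precisely the claimed block form.

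The determinant identity is pure linear algebra and is the one place warranting care. Assuming first that ${\bf B}_n$ is invertible, I would use column operations: adding to the last $n$ columns the combination of the first $n$ columns prescribed by $({\bf B}_n^\top)^{-1}{\bf C}_n$ clears ${\bf C}_n$ without altering the determinant, reducing ${\bf M}_n$ to the block-antidiagonal matrix $\left(\begin{smallmatrix}{\bf 0}&{\bf B}_n\\ -{\bf B}_n^\top&{\bf 0}\end{smallmatrix}\right)$. Swapping the two blocks of columns (sign $(-1)^{n^2}=(-1)^n$) gives $\det=(-1)^n\det({\bf B}_n)\det(-{\bf B}_n^\top)=[\det({\bf B}_n)]^2$. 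The main obstacle is removing the invertibility hypothesis: I would observe that $\det{\bf M}_n$ and $[\det{\bf B}_n]^2$ are polynomials in the entries of ${\bf B}_n$ and ${\bf C}_n$ which agree on the Zariski-dense set where ${\bf B}_n$ is invertible, hence agree identically. Finally $[\det{\bf B}_n]^2=\det({\bf B}_n^\top{\bf B}_n)$, completing the chain of equalities.

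For the last assertion, if ${\bf B}_n$ has full rank then $\det{\bf M}_n=[\det{\bf B}_n]^2\neq 0$, so ${\bf M}_n$ has full rank $2n=2m$. Proposition~\ref{p-LI}, applied with $r=2m$, then shows that $w_1,\dots,w_{2n}$ are linearly independent modulo $\cD^n\ti{min}$. Since von Neumann's formula \eqref{e-vN} and the deficiency indices $(n,n)$ give $\dim(\cD^n_+\dotplus\cD^n_-)=2n$, these $2n$ vectors form a basis of $\cD^n\ti{max}$ modulo $\cD^n\ti{min}$, as claimed.
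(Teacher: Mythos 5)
Your proposal is correct, and its overall architecture matches the paper's proof: zero upper-left block via the eigenvalue relation $\ell^n[P_k]=k^n(k+1)^nP_k$ plus orthogonality, antisymmetry of the sesquilinear form, the block-structure determinant identity, and then linear independence modulo $\cD\ti{min}$ from the full-rank criterion. The differences are in two steps, both legitimate. For antisymmetry, the paper computes $[f_j,g_k]\big|_{-1}^1=[j^n(j+1)^n-k^n(k+1)^n]\langle f_j,g_k\rangle$ for eigenfunctions and reads off skewness from that identity (equation \eqref{e-jandk}), whereas you deduce skewness directly from Green's formula \eqref{e-greens} and real-valuedness; your argument is more general (it needs no eigenvalue equation), while the paper's computation is reused anyway to produce the explicit entries. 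For the determinant, the paper invokes the block formula $\det({\bf M}_n)=\det\big({\bf 0}^\top{\bf C}_n-(-{\bf B}_n^\top){\bf B}_n\big)=\det({\bf B}_n^\top{\bf B}_n)$ in one line (valid here because the zero block commutes with $-{\bf B}_n^\top$), whereas you perform column operations under the hypothesis that ${\bf B}_n$ is invertible and then remove that hypothesis by a polynomial-identity (Zariski density) argument; your route is more self-contained and does not quote the commuting-blocks determinant formula, at the cost of the extra density step --- which, strictly speaking, is only needed if one wants the identity when $\det{\bf B}_n=0$, a case irrelevant to the final conclusion. Finally, you derive the basis statement from Proposition \ref{p-LI} with $r=2m=2n$ together with von Neumann's formula \eqref{e-vN}, which is arguably cleaner than the paper's citation of Corollary \ref{c-fullrank}, since it makes the dimension count explicit.
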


\begin{proof}
Simplify the entries of the matrix ${\bf M}$ by using Green's formula \eqref{e-greens} as follows: For $f_j = P_j$ or $f_j = Q_j$ and $g_k = P_k$ or $g_k =Q_k$ and with the standard inner product $\langle\fdot,\fdot\rangle$ on $L^2(-1,1)$, notice that
\begin{align}\label{e-jandk} 
[f_j,g_k]\bigg|_{-1}^1&=
\langle \ell^n[f_j] , g_k\rangle - \langle f_j , \ell^n[g_k]\rangle
=[j^n(j+1)^n-k^n(k+1)^n]\langle f_j , g_k\rangle
\\
&\notag
 =-[k^n(k+1)^n-j^n(j+1)^n]\langle f_j , g_k\rangle
 = \langle f_j , \ell^n[g_k]\rangle -\langle \ell^n[f_j] , g_k\rangle
 \\
 &\notag
 =-[g_k,f_j]\bigg|_{-1}^1
.
\end{align}
Here, we used that repeated applications of the eigenvalue equation \eqref{e-eigen} yield $\ell^n[P_k]=k^n(k+1)^nP_k$, and similarly for $Q_k$.
This shows antisymmetry.

In the first quadrant of the matrix, we encounter the case when the sesquilinear form is evaluated for two Legendre polynomials. Due to the orthogonality of the inner product in \eqref{e-jandk} evaluates to zero when $f_j=P_j$ and $g_k=P_k$ when $j\neq k$. For $j=k$, the coefficient in front of the inner product equals zero. Summing up, this shows
 \[
 [P_j,P_k]\bigg|_{-1}^1
 =0, \text{ for all }j,k\in\NN_0.
 \]
 
By linear algebra, the determinant of ${\bf M}$ can be calculated using
\[
\det{({\bf M}_n)}=\det{[({\bf 0}^\top)({\bf C}_n)-(-{\bf B}^\top _n)({\bf B}_n)]}=\det{({\bf B}^\top _n{\bf B}_n)}=[\det({\bf B}_n)]^2.
\]

The last statement follows from Corollary \ref{c-fullrank}.
\end{proof}

Several immediate consequences of equation \eqref{e-jandk} regarding the particular entries of ${\bf M}_n$ are now apparent. As before, let $f_j = P_j$ or $f_j = Q_j$ and $g_k = P_k$ or $g_k =Q_k$.
First notice that
\[
[f_j,g_k]\bigg|_{-1}^1 = 0
\qquad\text{for } j=k.
\]

Fortunately, formulas are known to calculate the relevant inner products \cite[pp.~236]{P}, and also exist in the cases where two different $P_j$'s and two different $Q_k$'s are considered: 
When both functions are Legendre functions of the second kind it is known that
\begin{align*}
\langle Q_j,Q_k\rangle=\dfrac{[\psi(j+1)-\psi(k+1)][1+\cos(j\pi)\cos(k\pi)]+\frac{1}{2}\pi\sin((k-j)\pi)}{(k-j)(j+k+1)},
\end{align*}
for real $j,k>0$ with $j\neq k$. The case where the functions are of mixed type is given by
\begin{align*}
\langle P_j,Q_k\rangle=\dfrac{2\sin(j\pi)\cos(k\pi)[\psi(j+1)-\psi(k+1)]+\pi\cos((k-j)\pi)-\pi}{\pi(k-j)(j+k+1)},
\end{align*}
and is valid for real $j,k>0$ and $j\neq k$.
The function $\psi(x)$ is the digamma function in the above two formulas.

The formulas are only necessary for $j,k\in\NN_0$, reducing the computations significantly.
In particular, for $j,k\in\NN_0$:
\begin{align}\label{e-qs}
&[Q_j,Q_k]\bigg|_{-1}^1\\
\notag
&=
\left\{
\begin{array}{ll}
\dfrac{2[\psi(j+1)-\psi(k+1)][j^3(j+1)^3-k^3(k+1)^3]}{(k-j)(j+k+1)}=:\Phi_{jk},&j+k \text{ even and }j\neq k,\\
0,& j+k \text{ odd or }j=k.
\end{array}
\right.
\end{align}
Analogously, for $j,k\in\NN_0$:
\begin{align}
\label{e-pq}
[P_j,Q_k]\bigg|_{-1}^1&=
\left\{
\begin{array}{ll}
\dfrac{-2[j^3(j+1)^3-k^3(k+1)^3]}{(k-j)(j+k+1)}=:\f_{jk},&j+k \text{ odd},\\
0,& j+k \text{ even}.
\end{array}
\right.
\end{align}

The definition of the matrix entries given in equations \eqref{e-qs} and \eqref{e-pq} immediately yield that $\Phi_{jk}=-\Phi_{kj}$ and $\f_{jk}=-\f_{kj}$. In particular, the blocks ${\bf B}_n$ and ${\bf C}_n$ from Proposition \ref{p-matrix} can be explicitly computed for even $n$ as 
\[
{\bf B}_n=
\begin{pmatrix}
0 & \f_{01} & 0 & \f_{03} & \dots & \f_{0(n-1)} \\
\f_{10} & 0 & \f_{12} & 0 & \dots & 0 \\
0 & \f_{21} & 0 & \f_{23} & \dots & \f_{2(n-1)} \\
\f_{30} & 0 & \f_{32} & 0 & \dots & 0 \\
0 & \f_{41} & 0 & \f_{43} & \dots & \f_{4(n-1)} \\
\vdots & \vdots & \vdots & \vdots & \ddots & \vdots \\
\f_{(n-1)0} & 0 & \f_{(n-1)2} & 0 & \dots & 0
\end{pmatrix},
\]
and
\[
{\bf C}_n=
\begin{pmatrix}
0 & 0 & \Phi_{02} & 0 & \dots & 0 \\
0 & 0 & 0 & \Phi_{13} & \dots & \Phi_{1(n-1)} \\
\Phi_{20} & 0 & 0 & 0 & \dots & 0 \\
0 & \Phi_{31} & 0 & 0 & \dots & \Phi_{3(n-1)} \\
0 & 0 & \Phi_{42} & 0 & \dots & 0 \\
\vdots & \vdots & \vdots & \vdots & \ddots & \vdots \\
0 & \Phi_{(n-1)1} & 0 & \Phi_{(n-1)3} & \dots & 0
\end{pmatrix}.
\]

The case where $n$ is odd can similarly be written down in terms of the above formulas.

Mathematica can be used to ease the trouble of populating the matrix with the relevant entries.

\begin{ex*}
We return to the case $n=3$. The following matrix is the result of setting $w_1=P_0$, $w_2=P_1$, $w_3=P_2$, $w_4=Q_1$, $w_5=Q_2$, and $w_6=Q_3$:
\[
\sbox0{$\begin{matrix}8 & 0 & 288\\0 & 104 & 0\\104 & 0 & 504\end{matrix}$}
\sbox1{$\begin{matrix}-8 & 0 & -104\\0 & -104 & 0\\-288 & 0 & -504\end{matrix}$}
\sbox2{$\begin{matrix}0 & 0 & \frac{860}{3}\\0 & 0 & 0\\\frac{-860}{3} & 0 & 0\end{matrix}$}
{\bf M}_3=\left(
\begin{array}{c|c}
\makebox[\wd0]{\large \bf{0}}&\usebox{0}\vspace{-.4cm}\\\\
\hline\vspace{-.3cm}\\
  \vphantom{\usebox{0}}\usebox{1}&\usebox{2}
\end{array}
\right).
\]
The reduced row echelon form of the above matrix is easily calculated to be $I_6$. Indeed, in accordance with Proposition \ref{p-matrix}, it suffices to observe that the upper right quadrant ${\bf B}_3$ of ${\bf M}_3$ has full rank, which it clearly does.

Now, Corollary \ref{c-fullrank} asserts that these are in fact the desired GKN conditions for the cube of the Legendre differential operator.
\hfill$\kreuz$
\end{ex*}

In virtue of Proposition \ref{p-matrix} higher values of $n$ are more easily accessible, as they are less computationally expensive and the digamma function in the lower right quadrant is avoided. 

The cases where $n=4$ and $n=5$ are included below to illustrate how this method can be generalized.

\begin{ex*}
For $n=4$ choose the functions $P_0,\dots,P_3$ and $Q_0,\dots,Q_3$ as candidates for basis vectors. The relevant matrix can be computed to be:
\[
{\bf B}_4=
\begin{pmatrix}
0 & 16 & 0 & 3456 \\
16 & 0 & 640 & 0 \\
0 & 640 & 0 & 6480 \\
3456 & 0 & 6480 & 0
\end{pmatrix}.
\]
This matrix is of particular interest, because it is representative of all cases where $n$ is even and the basis vectors are chosen to be $P_0,\dots,P_{n-1}$ and $Q_0,\dots,Q_{n-1}$. In these cases, the matrix ${\bf B}_n$ is symmetric.

The invertibility of ${\bf B}_4$ immediately reduces to showing that both submatrices
\[
\begin{pmatrix}
16 & 3456 \\
 640 &  6480
\end{pmatrix}
\quad\text{and}\quad
 \begin{pmatrix}
16  & 640  \\
3456  & 6480 
\end{pmatrix}
\]
have non-zero determinant. This is trivially true.

Therefore, the GKN conditions $P_0,\dots,P_3$ yield a self-adjoint operator and since all eigenfunctions $P_k$ satisfy these conditions, we obtain the left-definite operator that is associated with ${\bf L}^4$.
\hfill$\kreuz$\end{ex*}

\begin{ex*}
For $n=5$ choose the functions $P_0,\dots,P_4$ and $Q_1,\dots,Q_5$ as candidates for basis vectors. The relevant matrix can be computed to be:
\[
{\bf B}_5=
\begin{pmatrix}
32 & 0 & 41472 & 0 & 1620000 \\
0 & 3872 & 0 & 355552 & 0 \\
3872 & 0 & 80352 & 0 & 2024352 \\
0 & 80352 & 0 & 737792 & 0 \\
355552 & 0 & 737792 & 0 & 4220000 \\
\end{pmatrix}.
\]
Again, it can easily be shown that ${\bf B}_5$ has full rank.
\hfill$\kreuz$\end{ex*}

\begin{ex*}
Matlab has allowed us to verify that the first $n$ Legendre polynomials are suitable for ${\bf L}^n$ when $n\le16$. Vast stratification in magnitude of matrix entries is responsible for this \emph{early} failing of the numerical computations.
\hfill$\kreuz$\end{ex*}

\subsection{Necessary Condition and Conjecture}
It is clear the methods developed above are powerful, albeit limited to calculation, and can be expressed in more generality. Apart from increasing $n$, another way to generalize stems from choosing a more general set of indices for the Legendre polynomials, and for the Legendre functions of the second kind, as hinted at in Theorem \ref{t-eigenfunctions}.

There is a necessary condition for the truncated matrix ${\bf B}_n$ having full rank which requires some additional requirements on the indices of both the Legendre polynomials and the Legendre functions of the second kind.

\begin{prop}\label{t-evenodd}
Let ${\bf M}_n$ define a basis of functions for the space $\cD_+^n\dotplus\cD_-^n$ when $n$ is general. Let the choice of functions for this basis be $P_{j_1},P_{j_2},\dots,P_{j_n}$ and $Q_{k_1},Q_{k_2},\dots,Q_{k_n}$. Define $\mathfrak{I}:=\{j_1,\dots,j_n,k_1,\dots,k_n\}$ be the collection of indices for these functions. Then $\mathfrak{I}$ contains $n$ even and $n$ odd elements.
\end{prop}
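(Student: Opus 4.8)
The plan is to read off the required parity balance directly from the sparsity pattern of the off-diagonal block ${\bf B}_n$ appearing in Proposition \ref{p-matrix}. By Proposition \ref{p-matrix} together with Corollary \ref{c-fullrank}, the collection $\{P_{j_1},\dots,P_{j_n},Q_{k_1},\dots,Q_{k_n}\}$ forms a basis of $\cD_+^n\dotplus\cD_-^n$ modulo $\cD^n\ti{min}$ precisely when ${\bf B}_n$ has full rank $n$, where the $(a,b)$ entry of ${\bf B}_n$ is $[P_{j_a},Q_{k_b}]\big|_{-1}^1=\f_{j_a k_b}$. So the whole statement reduces to a counting argument about which of these entries can be nonzero.

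First I would record the parity dichotomy from \eqref{e-pq}: the entry $\f_{j_a k_b}$ vanishes unless $j_a+k_b$ is odd, that is, unless $j_a$ and $k_b$ have opposite parity. Introduce the counts $p:=\#\{a:j_a\text{ even}\}$ and $q:=\#\{b:k_b\text{ even}\}$, so that $\mathfrak{I}$ contains $p+q$ even elements and $2n-(p+q)$ odd elements; the goal then becomes the single identity $p+q=n$.

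Next comes the support-confinement step, which is the crux. The $p$ rows of ${\bf B}_n$ indexed by an even $j_a$ can be nonzero only in the columns indexed by odd $k_b$, of which there are $n-q$. Hence these $p$ rows all lie in an $(n-q)$-dimensional coordinate subspace of $\R^{n}$; if $p>n-q$ they are necessarily linearly dependent and ${\bf B}_n$ is singular. Full rank therefore forces $p\le n-q$, i.e.~$p+q\le n$. Running the symmetric argument on the $n-p$ rows indexed by odd $j_a$ — which can be nonzero only in the $q$ columns indexed by even $k_b$ — full rank forces $n-p\le q$, i.e.~$p+q\ge n$. Combining the two inequalities yields $p+q=n$, so $\mathfrak{I}$ has exactly $n$ even and $n$ odd indices, as claimed.

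I do not expect a genuinely hard analytic step here: the whole thing is a sparsity/pigeonhole count once the parity rule \eqref{e-pq} is in hand. The only points requiring care are orienting both inequalities correctly and invoking Proposition \ref{p-matrix} to pass from the hypothesis (${\bf M}_n$ has full rank, equivalently the functions form a basis) to full rank of the smaller block ${\bf B}_n$. One could equivalently permute the rows and columns of ${\bf B}_n$ by parity to display it as a block anti-diagonal matrix $\left(\begin{smallmatrix}{\bf 0}&X\\Y&{\bf 0}\end{smallmatrix}\right)$ and apply $\operatorname{rank}=\operatorname{rank}(X)+\operatorname{rank}(Y)$ with $X$ of size $p\times(n-q)$ and $Y$ of size $(n-p)\times q$; but the direct row-support version above avoids even that bookkeeping.
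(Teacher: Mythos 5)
Your proof is correct and follows essentially the same route as the paper's: reduce to full rank of ${\bf B}_n$ via Proposition \ref{p-matrix}, invoke the parity rule \eqref{e-pq}, and count dimensions of the nonzero blocks (the paper permutes rows and columns to exhibit the block anti-diagonal form you mention as an alternative). If anything, your explicit two-sided inequalities $p+q\le n$ and $p+q\ge n$ are cleaner than the paper's contradiction argument, which attributes the rank deficiency to the two blocks being non-square, whereas the real reason is the dimension count $\operatorname{rank}(X)+\operatorname{rank}(Y)\le(n-q)+(n-p)=2n-(p+q)<n$ that you make explicit.
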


\begin{proof}
Assume ${\bf M}_n$ has full rank, so that $P_{j_1},P_{j_2},\dots,P_{j_n}$ and $Q_{k_1},Q_{k_2},\dots,Q_{k_n}$ are indeed a basis for $\cD\ti{max}$ modulo $\cD\ti{min}$. Without loss of generality, also assume that there are more even numbers in $\mathfrak{I}$ than odd numbers.
By Proposition \ref{p-matrix}, the Glazman conditions and anti-symmetry of the matrix then completely reduces our problem to showing that ${\bf B}_n$ does not have full rank.

Recall that the $(i,l)$-entry of ${\bf B}_n$ equals $[P_{j_i},Q_{k_l}]|_{-1}^1$ for $1\le i,l\le n$ and is given by equation \eqref{e-pq}. So, these entries are only nonzero when $j_i+k_l$ is odd. Hence, interchange rows of ${\bf B}_n$ to group the even indices first for the set of $P_{j_i}$'s, and interchange columns to group the even indices first for the set of $Q_{k_l}$'s. This creates two blocks of entries on the anti-diagonal in the upper right quadrant, where the $P$ index is even and the $Q$ index is odd, and one where the $P$ index is odd and the $Q$ index is even. The rank of ${\bf B}_n$ is the sum of the rank of these two blocks. However, because there are more even indices than odd ones, neither of these blocks are square, so the sum of their ranks cannot be equal to $n$. The means ${\bf M}_n$ cannot have rank $2n$, which is a contradiction. Therefore, the number of even and odd numbers in the set $\mathfrak{I}$ must be equal.
\end{proof}

The matrix ${\bf B}_n$ raises several important questions concerning the rules that are necessary and/or sufficient on the indices (other than the requirement of Proposition \ref{t-evenodd} above) in order to ensure that the ${\bf B}_n$ has rank $n$. Unfortunately, the answer to this question can only be conjectured currently. Progress in this direction using theoretical aspects of the setup is shown in the next two sections.

\begin{conj}
Let $P_{j_1},P_{j_2},\dots,P_{j_n}$ be any set of $n$ distinct Legendre polynomials, with $n_1$ odd indices and $n_2$ even indices so that $n_1+n_2=n$. Then these Legendre polynomials can be used as GKN conditions to define the $n/2$ left-definite domain.

In particular, choose any $n$ distinct Legendre functions of the second kind $Q_{k_1},Q_{k_2},\dots,Q_{k_n}$ with $n_2$ odd indices and $n_1$ even indices. Then together these $2n$ functions constitute a basis of the space $\cD_+^n\dotplus\cD_-^n$.
\end{conj}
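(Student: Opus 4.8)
The plan is to prove the equivalent statement that the sesquilinear matrix ${\bf M}_n$ built from the $2n$ chosen functions has full rank; by Proposition \ref{p-matrix} this reduces to showing that the off-diagonal block ${\bf B}_n$ is nonsingular, and then Corollary \ref{c-fullrank} together with Theorem \ref{t-eigenfunctions} upgrades this to the claimed statements about the basis of $\cD_+^n\dotplus\cD_-^n$ and about the left-definite domain. The first step is to record the entries of ${\bf B}_n$ for general $n$. Writing $\lambda_j:=j(j+1)$ and using the identity $(k-j)(j+k+1)=\lambda_k-\lambda_j$ together with $\ell^n[P_j]=\lambda_j^nP_j$ as in \eqref{e-jandk}, the same computation that produces \eqref{e-pq} gives, for $j+k$ odd,
\[
[P_j,Q_k]\big|_{-1}^1 = 2\,\frac{\lambda_j^n-\lambda_k^n}{\lambda_j-\lambda_k}=2\sum_{s=0}^{n-1}\lambda_j^s\lambda_k^{\,n-1-s},
\]
and $0$ when $j+k$ is even.

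Next I would exploit the parity structure exactly as in the proof of Proposition \ref{t-evenodd}. Grouping the even $P$-indices and even $Q$-indices first, the matrix ${\bf B}_n$ becomes anti-block-diagonal of the form $\left(\begin{smallmatrix} 0 & {\bf B}' \\ {\bf B}'' & 0\end{smallmatrix}\right)$, where under the hypotheses of the conjecture ${\bf B}'$ is the $n_2\times n_2$ block indexed by the even $P$'s against the odd $Q$'s, and ${\bf B}''$ is the $n_1\times n_1$ block indexed by the odd $P$'s against the even $Q$'s. \emph{Both blocks are square}, so $\det({\bf B}_n)=\pm\det({\bf B}')\det({\bf B}'')$ and it suffices to prove each factor is nonzero. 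This is precisely the reduction already visible in the $n=4$ example, where the invertibility of ${\bf B}_4$ was reduced to that of two $2\times2$ submatrices.

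Each block has the Cauchy--Vandermonde shape $\big(\tfrac{x_a^n-y_b^n}{x_a-y_b}\big)_{a,b}$, where the $x_a$ are the $\lambda$-values of one parity class of chosen $P$-indices and the $y_b$ the $\lambda$-values of the opposite parity class of chosen $Q$-indices. These two index sets have opposite parity, hence are disjoint, and since $j\mapsto j(j+1)$ is strictly increasing on $\NN_0$ the $x_a$ are distinct and the $y_b$ are distinct within each block. Factoring $\tfrac{x^n-y^n}{x-y}=\sum_{s=0}^{n-1}x^sy^{n-1-s}$ writes the block as a product $VW$ of a $p\times n$ rectangular Vandermonde $V=(x_a^s)$ and an $n\times p$ matrix $W=(y_b^{n-1-s})$, with $p$ equal to $n_2$ or $n_1$ respectively, each at most $n$. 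The plan is then to apply the Cauchy--Binet formula together with the classical positivity of generalized Vandermonde determinants (equivalently, monomial-positivity of Schur polynomials): for $0<x_1<\dots<x_p$ each minor $\det(x_a^s)_{s\in S}$ is strictly positive, while each minor $\det(y_b^{n-1-s})_{s\in S}$ equals $(-1)^{p(p-1)/2}$ times a strictly positive generalized Vandermonde, with that global sign \emph{independent of} $S$. Hence every nonzero term of the Cauchy--Binet expansion carries the same sign, and at least one such term is nonzero because $p\le n$; this forces $\det({\bf B}')\neq0$ and $\det({\bf B}'')\neq0$.

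The main obstacle, and essentially the only place where genuine care is needed, is the eigenvalue $\lambda_0=0$: when the index $0$ (necessarily even) occurs among the chosen $P$'s or $Q$'s, one argument of a block vanishes. I would handle this by noting that a vanishing argument only annihilates those Cauchy--Binet terms whose index set $S$ omits the relevant extreme exponent ($0$ for a $V$-minor, $n-1$ for a $W$-minor), while the surviving terms still share one common sign and include at least one strictly nonzero term; thus no cancellation can occur and the determinant stays nonzero. The residual bookkeeping, namely checking that the global sign is genuinely $S$-independent and that the Schur-positivity input applies verbatim in the presence of a single zero argument, is routine but should be carried out carefully. Once ${\bf B}_n$ is shown nonsingular for every admissible parity-balanced choice of indices, Corollary \ref{c-fullrank} and Theorem \ref{t-eigenfunctions} deliver both conclusions of the conjecture.
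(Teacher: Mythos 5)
Your proposal targets a statement that the paper itself never proves: it is an open conjecture there, supported only by a Mathematica verification for $n=2$, one numerical instance for $n=4$, and remarks about special subcases for $n=3,4$. So there is no paper proof to match; what you give is a genuine proof strategy for all $n$, and after checking the deferred details I believe it is correct. Your reduction steps are exactly the paper's machinery used as intended: Proposition \ref{p-matrix} to reduce full rank of ${\bf M}_n$ to nonsingularity of ${\bf B}_n$, the parity regrouping from the proof of Proposition \ref{t-evenodd} to split ${\bf B}_n$ (after row/column permutations) into an $n_2\times n_2$ block pairing even $P$'s with odd $Q$'s and an $n_1\times n_1$ block pairing odd $P$'s with even $Q$'s, and Theorem \ref{t-eigenfunctions} with Corollary \ref{c-fullrank} to convert nonsingularity into both conclusions of the conjecture. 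The new ingredient, absent from the paper, is twofold. First, the general-$n$ entry formula $[P_j,Q_k]_n\big|_{-1}^1=2(\lambda_j^n-\lambda_k^n)/(\lambda_j-\lambda_k)=2\sum_{s=0}^{n-1}\lambda_j^s\lambda_k^{n-1-s}$ for $j+k$ odd, $\lambda_j=j(j+1)$; this is right, since $(k-j)(j+k+1)=\lambda_k-\lambda_j$, and it follows from \eqref{e-jandk} and the quoted inner-product formula just as the paper derives \eqref{e-pq} --- note that \eqref{e-pq} as printed is the $n=3$ specialization, so this rederivation must appear in a full write-up rather than a citation. (Your formula reproduces the paper's ${\bf M}_3$, ${\bf B}_4$, ${\bf B}_5$ entries, e.g.\ $[P_0,Q_3]_3=2\lambda_3^2=288$.) Second, the factorization of each parity block as a product of rectangular Vandermonde matrices plus Cauchy--Binet: all $p\times p$ minors $\det(x_a^{s})_{s\in S}$ with $0<x_1<\dots<x_p$ are positive (Schur positivity), all minors $\det(y_b^{n-1-s})_{s\in S}$ carry the common, $S$-independent sign $(-1)^{p(p-1)/2}$, so the expansion has no cancellation, and $p\le n$ guarantees at least one nonzero term. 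This is what the paper could not do and had to replace by computer algebra. Two points should be made explicit when writing it up: within each block at most one argument can vanish and never one of each kind ($\lambda_0=0$ enters ${\bf B}'$ only via $P_0$, since odd-indexed $Q$'s have $\lambda\ge2$, and enters ${\bf B}''$ only via $Q_0$), which is exactly what makes your ``single zero argument'' analysis sufficient; and in that degenerate case the surviving Cauchy--Binet terms are precisely those $S$ containing the extreme exponent ($0$ for a $V$-minor, $n-1$ for a $W$-minor), which one checks still share a single sign --- I verified both, so with these details spelled out your argument closes the conjecture.
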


Immediate inspiration for the conjecture stems from the $n=2$ case.

\begin{ex*}
Consider the $n=2$ case for simplicity. If one odd index and one even index are chosen for the $P_j$'s (say $P_j$ and $P_k$ with even $j$ and odd $k$) then the claim follows: ${\bf B}_2$ only has entries on the anti-diagonal and hence is rank 2.
\hfill$\kreuz$
\end{ex*}

\begin{ex*}
Now, again for $n=2$, assume that both chosen indices are odd, so that both of the indices for the $Q_k$'s are even. As a further simplification, choose $Q_0$ and $Q_2$. The matrix of interest is 
\[
{\bf B}_2=
\begin{pmatrix}
\f_{j0} & \f_{j2} \\
\f_{k0} & \f_{k2}
\end{pmatrix},
\]
where $j$ and $k$ are both odd.

To reduce to row echelon form, the row operation necessary is $\al\f_{j0}+\f_{k0}=0$ so that $\al=-\f_{k0}/\f_{j0}$. This changes $\f_{k2}$ to be $\widetilde{\f}_{k2}=\f_{j2}+\al\f_{k2}$. This can be written out explicitly to be 
\begin{align*}
\widetilde{\f}_{k2}=&((k^2 (k + 1)^2)-1) (j^2 (j + 1)^2 - 36) (j^2 + j) (-k^2 - k + 
     6) \\ 
     &- ((j^2 (j + 1)^2)-1) (k^2 (k + 1)^2 - 36) (k^2 + k) (-j^2 - j + 
     6).
\end{align*}
Mathematica shows that $\widetilde{\f}_{k2}=0$ has no solutions for distinct $j$ and $k$, where both are odd and positive. A similar equation is relevant for the case where both $j$ and $k$ are even and $Q_1$ and $Q_3$ are chosen as the paired basis vectors. Mathematica similarly shows it is not possible to get $j$ and $k$ to be distinct, even and positive. This shows that any 2 distinct indices for the Legendre polynomials will be sufficient to define the first left-definite domain via GKN conditions.  
\hfill$\kreuz$\end{ex*}

There is further evidence that the conjecture is true.

\begin{ex*}
Let $n=4$ and choose the functions $P_{17},P_{42},P_{49},P_{125}$ and $Q_{24},Q_{82},Q_{97},Q_{178}$ as candidates for the basis vectors. The relevant matrix can be computed to be:
\[
{\bf B}_4=
\begin{pmatrix}
821988432 & 660210828928 & 0 & 65319097828480 \\
0 & 0 & 2118187203328 & 0 \\
38811250000 & 968624405632 & 0 & 70078111267456 \\
8123415750000 & 13280257143232 & 0 & 120291674577856
\end{pmatrix}.
\]
It is not hard to see that this matrix possesses full rank. 
\hfill$\kreuz$
\end{ex*}

Unfortunately, the complexity of the matrix operations and higher values for $n$ mean verifying that solutions are not of the desired form is computationally expensive. However, to add a little more weight to the conjecture, the above form can be easily adapted to the $n=3,4$ cases where there are two even or two odd choices of indices of the $P_j$'s. In conclusion the assertion in the conjecture about the choice of indices for the $P_j$'s is true for $n=2$ and for special cases when $n=3$ and when $n=4$.

\section{General Left-Definite Theory yields GKN Conditions}\label{s-results}

\indent The following theorems apply to general left-definite settings so it is imperative to clarify some of the subtler points of abstraction. It should be understood that the differential expression that is being generated from left-definite theory is not changing under the classical extension theory, only that the minimal domain is being augmented to include more functions and become self-adjoint. Specifically, ${\bf L}\ti{min}$ and ${\bf L}$ possess domains $\cD\ti{min}$ and $\cD_{{\bf L}}$ respectively, but both operate on functions using $\ell[\fdot]$. Also, recall that an operator defined by left-definite theory means that it is generated by composing self-adjoint differential operators with themselves to create a Hilbert scale of operators. The domains of these operators shrink as the number of compositions increase. For more details refer to Section \ref{s-leftdef}.

\begin{theo}
Let ${\bf L}$ be a self-adjoint operator defined by left-definite theory on $L^2[(a,b),w]$ with domain $\cD_{{\bf L}}$ (which is a restriction of the maximal domain $\cD\ti{max}$) that includes a complete orthogonal system of eigenfunctions. Enumerate the eigenfunctions as $\{P_k\}_{k=0}^{\infty}$. Furthermore, let ${\bf L}$ be an extension of the minimal (symmetric, closed) operator ${\bf L}\ti{min}$ with domain $\cD\ti{min}$, where ${\bf L}$ and ${\bf L}\ti{min}$ operate on their respective domains by $\ell[\fdot]$, and ${\bf L}\ti{min}$ has deficiency indices $(m,m)$.  

Then, the GKN conditions for the self-adjoint operator ${\bf L}$ are given by some $\{P_{k_1},\dots,P_{k_m}\}$.
\end{theo}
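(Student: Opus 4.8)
The plan is to reduce the entire statement to Theorem \ref{t-eigenfunctions}, whose hypothesis requires only that \emph{some} $m$ eigenfunctions $P_{k_1},\dots,P_{k_m}$ extend to a basis of $\cD_+\dotplus\cD_-$ modulo $\cD\ti{min}$. So the real content is an existence claim: I must produce $m$ eigenfunctions that are linearly independent modulo $\cD\ti{min}$ and whose images fill out the part of the defect space carved off by $\cD_{\bf L}$. I would work throughout with the graph norm of Subsection \ref{s-graph}, so that by Lemma \ref{l-graphnorm} the decomposition $\cD\ti{max}=\cD\ti{min}\oplus\ci{{\bf L}}\cD_+\oplus\ci{{\bf L}}\cD_-$ is orthogonal. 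Since ${\bf L}$ is a self-adjoint extension enlarging $\cD\ti{min}$ by exactly $m$ dimensions, write $\cD_{\bf L}=\cD\ti{min}\oplus\ci{{\bf L}}\cG$, where $\cG$ is the $m$-dimensional graph-orthogonal complement of $\cD\ti{min}$ inside $\cD_{\bf L}$ and $\cG\subseteq\cD_+\dotplus\cD_-$. Let $\Pi$ be the graph-orthogonal projection of $\cD\ti{max}$ onto $\cD_+\oplus\ci{{\bf L}}\cD_-$ along $\cD\ti{min}$; because every $P_k$ lies in $\cD_{\bf L}$, its projection $\Pi P_k$ lands in $\cG$.

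The crux is the claim that $\spa\{\Pi P_k\}_{k=0}^{\infty}=\cG$; this is precisely the finite-dimensional linear algebra that the introduction to this section advertises. To prove it, suppose $v\in\cG$ is graph-orthogonal to every $\Pi P_k$. Since $\cG$ is graph-orthogonal to $\cD\ti{min}$ and $P_k-\Pi P_k\in\cD\ti{min}$, this gives $\langle P_k,v\rangle\ci{{\bf L}}=0$ for all $k$, where $\langle x,y\rangle\ci{{\bf L}}=\langle x,y\rangle\ci{\cH}+\langle\ell[x],\ell[y]\rangle\ci{\cH}$. The key point is that $v\in\cG\subseteq\cD_{\bf L}$, so I may combine the eigenvalue relation $\ell[P_k]=\lambda_k P_k$ with the self-adjointness of ${\bf L}$ to write $\langle P_k,\ell[v]\rangle\ci{\cH}=\langle{\bf L}P_k,v\rangle\ci{\cH}=\lambda_k\langle P_k,v\rangle\ci{\cH}$ (the $\lambda_k$ are real). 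Substituting collapses the graph inner product:
\[
0=\langle P_k,v\rangle\ci{{\bf L}}=\langle P_k,v\rangle\ci{\cH}+\lambda_k^2\langle P_k,v\rangle\ci{\cH}=(1+\lambda_k^2)\langle P_k,v\rangle\ci{\cH},
\]
so $\langle P_k,v\rangle\ci{\cH}=0$ for every $k$. Completeness of $\{P_k\}$ in $\cH$ then forces $v=0$, and since $\cG$ is finite-dimensional this proves $\spa\{\Pi P_k\}=\cG$.

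With the spanning claim in hand, I choose indices $k_1,\dots,k_m$ so that $\Pi P_{k_1},\dots,\Pi P_{k_m}$ is a basis of the $m$-dimensional space $\cG$; the corresponding eigenfunctions $P_{k_1},\dots,P_{k_m}$ are then linearly independent modulo $\cD\ti{min}$, and completing with any basis $f_1,\dots,f_m$ of a complement of $\cG$ in $\cD_+\dotplus\cD_-$ yields a basis $\{P_{k_1},\dots,P_{k_m},f_1,\dots,f_m\}$ of $\cD_+\dotplus\cD_-$ modulo $\cD\ti{min}$. Theorem \ref{t-eigenfunctions} now applies verbatim and identifies $\cD_{\bf L}$ as the extension cut out by the GKN conditions $\{P_{k_1},\dots,P_{k_m}\}$.

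I expect the spanning claim to be the only genuine obstacle; everything else is bookkeeping with the orthogonal decomposition and an appeal to Theorem \ref{t-eigenfunctions}. The subtlety is that the naive computation of $\langle P_k,v\rangle\ci{{\bf L}}$ for a \emph{general} $v\in\cD\ti{max}$ leaves a boundary term $[v,P_k]\big|_a^b$ and does not close; it is exactly the restriction $v\in\cD_{\bf L}$ (so that $\ell[v]={\bf L}v$ with no surviving boundary term) that lets self-adjointness turn the graph inner product into the strictly positive multiple $(1+\lambda_k^2)$ of the $\cH$ inner product, after which completeness finishes the argument. Notably, this uses left-definiteness only through its consequences — self-adjointness of ${\bf L}$, boundedness below ensuring real eigenvalues, and completeness of the eigenfunctions in $\cH$ — so none of the explicit matrix computations of Section \ref{s-legendre} are required.
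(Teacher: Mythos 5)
Your proposal is correct, and it shares the paper's overall skeleton --- the graph-orthogonal decomposition of Lemma \ref{l-graphnorm}, projecting the eigenfunctions onto the defect space, and reducing to finite-dimensional linear algebra --- but it proves the crucial spanning claim by a genuinely different argument. The paper shows that the projections $\widetilde{P}_k$ span an $m$-dimensional subspace by contradiction: if the span were smaller, the graph-norm closure of $\spa\{P_k\}$ would be a proper subspace of $\cD_{\bf L}$, contradicting the combination of Lemma \ref{t-norms} (equivalence of the graph norm with the left-definite norm) and Theorem \ref{t-leftdefortho} (persistence of completeness of the eigenfunctions in the left-definite spaces). You instead prove it directly: for $v\in\cG\subseteq\cD_{\bf L}$ graph-orthogonal to all projections, the identity $\langle P_k,v\rangle\ci{{\bf L}}=(1+\lambda_k^2)\langle P_k,v\rangle\ci{\cH}$ (valid because $v$ lies in the self-adjoint domain, so no boundary term survives) collapses graph-orthogonality to $\cH$-orthogonality, and completeness in $\cH$ forces $v=0$. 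This computation is sound, and it buys something real: your proof never uses left-definiteness --- only self-adjointness, realness of eigenvalues, and completeness of $\{P_k\}$ in $\cH$ --- so it establishes the conclusion for \emph{any} self-adjoint extension whose domain contains a complete orthogonal eigenfunction system, whereas the paper's route needs ${\bf L}$ bounded below by a positive constant for the left-definite machinery to apply. The endgames also differ slightly but equivalently: you delegate the identification of the extension to Theorem \ref{t-eigenfunctions}, while the paper re-runs the GKN argument and closes with the adjoint chain ${\bf L}\subseteq\widetilde{{\bf L}}\subseteq\widetilde{{\bf L}}^*\subseteq{\bf L}^*$; since Theorem \ref{t-eigenfunctions} is proved earlier and independently, there is no circularity in your appeal to it. The trade-off is thematic rather than logical: the paper's proof exhibits how the left-definite structure itself forces the result, which is the point of that section, while yours is the leaner and more general argument.
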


\begin{proof}
The case $m=0$ is trivial, since then ${\bf L}\ti{min} = {\bf L} = {\bf L}\ti{max}$. Let $m\ge1$.

The main work lies in showing that some of the eigenfunctions yield appropriate choices for $w_1,\dots,w_m$ from the perspective of Theorem \ref{t-gkn1} (the GKN1 Theorem). That is, we need to show that there are $m$ members $\{P_{k_1},\dots,P_{k_m}\}$ of the collection $\{P_k\}_{k=0}^{\infty}$, which both satisfy the Glazman symmetry conditions \eqref{e-gkn1b} and are linearly independent modulo the minimal domain $\cD\ti{min}$, see Definition \ref{d-linind}.

First, we show that all possible choices of $\{P_{k_1},\dots,P_{k_m}\}$ satisfy the Glazman symmetry conditions. Recall that an application of the Green's formula for the sesquilinear form yields 
$[P_i,P_j]\big|_a^b=0$ for $i,j\in \N_0$, see equation \eqref{e-orthoeigens}.

Next, we show that there exist functions $\{P_{k_1},\dots,P_{k_m}\}$ that are linearly independent modulo $\cD\ti{min}$.
Part (2) of Lemma \ref{l-graphnorm} states that the maximal domain decomposes orthogonally with respect to graph norm into $\cD\ti{max}=\cD\ti{min}\oplus\ci{{\bf  A}}\cD_+\oplus\ci{{\bf  A}}\cD_-$.
Define the auxiliary functions $\{\widetilde{P}_k\}_{k=0}^{\infty}$ to be the orthogonal projection (in accordance with the graph norm) of the $P_k$'s onto $\cD_+\oplus\ci{{\bf  A}} \cD_-$.

Recall that ${\bf L}\ti{min}$ has defect indices $(m,m)$ and that the subspaces satisfy $$\cD\ti{min}<\cD\ti{\bf L}<\cD\ti{max}.$$ Since ${\bf L}$ is self-adjoint, we have that $\cD_{\bf L}\ominus\ci{{\bf  A}}\cD\ti{min}$ is $m$ dimensional. The projection is orthogonal, so 
$
\{\widetilde{P}_k\}_{k=0}^{\infty}
$
spans an $m$ dimensional subspace of $\cD_+\oplus\ci{{\bf  A}} \cD_-$, the closure being taken with respect to the graph norm. Indeed, assume this dimension was strictly less than $m$. The orthogonality of the projection also means that our assumption would imply that the closure in graph norm of $\spa\{{P}_k\}_{k=0}^{\infty}$ is a proper subspace of $\cD_{\bf L}$. Lemma \ref{t-norms} says the graph norm and the norm in the corresponding left-definite space are equivalent. However, Lemma \ref{t-leftdefortho} states the closure in the norm induced in the second left-definite space associated with the pair $(\cH,{\bf A}) = (L^2[(a,b),w], {\bf L})$ is $\cD_{\bf L}$. This is a contradiction, so the $\{\widetilde{P}_k\}_{k=0}^{\infty}$ span an $m$ dimensional subspace of $\cD_+\oplus\ci{{\bf  A}} \cD_-$.

This means that the problem is now finite dimensional! In particular, the closure of spans is obvious. Also, there are $m$ functions $\{\widetilde{P}_{k_i}\}_{i=1}^m$ which can be completed to a basis $\{\widetilde P_{k_1},\dots,\widetilde P_{k_m}, h_1, \hdots, h_m\}$ of $\cD_+\oplus\ci{{\bf  A}} \cD_-$. Therefore, the functions $\{\widetilde{P}_{k_i}\}_{i=1}^{m}$ are linear independent modulo $\cD\ti{min}$.

The definition of $ \widetilde{P}_{k}$ implies $P_{k} - \widetilde{P}_{k} \in \cD\ti{min}$. Hence, when viewed in the quotient space $\cD\ti{max}\ominus \cD\ti{min}$, the projection $\widetilde{P}_{k}$ belongs to the same equivalence class as the corresponding eigenfunction ${P}_{k}$, $[\widetilde{P}_{k}]=[P_{k}]$. Invoking the definition of linear independence modulo $\cD\ti{min}$ again, the eigenfunctions $\{P_{k_i}\}_{i=1}^m$ are linearly independent modulo the minimal domain.

Therefore, there are $m$ members of $\{P_{k_i}\}_{i=1}^m$ which define a self-adjoint extension $\widetilde{{\bf L}}$ of ${\bf L}\ti{min}$ in the above fashion, via the GKN1 Theorem (Theorem \ref{t-gkn1}). It remains to verify that 
$\widetilde{{\bf L}} = {\bf L}$. From equation \eqref{e-orthoeigens} we immediately conclude that all of the $\{P_k\}_{k=0}^{\infty}$ belong to the domain $\cD_{\widetilde{{\bf L}}}$. Thus ${\bf L}\subseteq \widetilde{{\bf L}}\subseteq \widetilde{{\bf L}}^*\subseteq {\bf L}^*$, and it is known that ${\bf L}={\bf L}^*$.
\end{proof}

This theorem relied heavily on the GKN1 Theorem and began with a self-adjoint operator generated by left-definite theory to define the boundary conditions imposed to create the left-definite space. However, the GKN theory goes both ways to create a complete framework of both necessary and sufficient conditions. The similar conditions of the GKN2 Theorem allow another statement to be made.

\begin{theo}
Let ${\bf L}$ be a self-adjoint operator that is bounded below on $L^2[(a,b),w]$ with domain $\cD_{\bf L}$ that includes a complete orthogonal system of eigenfunctions. Enumerate the eigenfunctions as $\{P_k\}_{k=0}^{\infty}$. Furthermore, let ${\bf L}$ be an extension of the minimal (symmetric, closed) operator ${\bf L}\ti{min}$ with domain $\cD\ti{min}$, and ${\bf L}\ti{min}$ have deficiency indices $(m,m)$. The $2n$th (for $n\in\NN$) left-definite space generated by the operator ${\bf L}$ will have deficiency indices $(nm,nm)$.

Then, the GKN conditions for the $2n$th left-definite space are given by some $\{P_{k_1},\dots,P_{k_{nm}}\}$.
\end{theo}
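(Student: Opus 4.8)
The plan is to recognize that this statement is the preceding theorem applied to the power ${\bf L}^n$, and to spend the proof verifying that all hypotheses transfer from $(\cH,{\bf L})$ to $(\cH,{\bf L}^n)$. Since ${\bf L}$ is self-adjoint and bounded below by $k>0$, the power ${\bf L}^n=\{\ell^n,\cD_{{\bf L}^n}\}$ is self-adjoint and bounded below by $k^n>0$, so left-definite theory applies to the pair $(\cH,{\bf L}^n)$. By Corollary \ref{t-comppower} we have $\cV_{2n}=\cD({\bf L}^n)$ and the $2n$th left-definite inner product is $\langle x,y\rangle_{2n}=\langle {\bf L}^n x,{\bf L}^n y\rangle$; thus the $2n$th left-definite space of $(\cH,{\bf L})$ coincides with the \emph{second} left-definite space of $(\cH,{\bf L}^n)$, and its associated self-adjoint operator is precisely ${\bf L}^n$. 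Characterizing the $2n$th left-definite domain therefore means characterizing $\cD_{{\bf L}^n}$, the domain of a self-adjoint extension of the minimal operator ${\bf L}^n\ti{min}=\{\ell^n,\cD^n\ti{min}\}$.

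First I would record the three hypothesis checks. The minimal operator ${\bf L}^n\ti{min}$ is closed and symmetric with deficiency indices $(nm,nm)$, as noted in Subsection \ref{ss-notation}. Repeated use of the eigenvalue equation gives $\ell^n[P_k]=\lambda_k^n P_k$, so each $P_k$ is an eigenfunction of ${\bf L}^n$; by Theorem \ref{t-leftdefortho} the system $\{P_k\}_{k=0}^{\infty}$ persists as a complete orthogonal set of eigenfunctions in the $2n$th left-definite space, and in particular lies in $\cD_{{\bf L}^n}$. The Glazman symmetry condition transfers verbatim, since the $\ell^n$-analogue of equation \eqref{e-orthoeigens} reads
\[
[P_i,P_j]_n\bigg|_a^b=(\lambda_i^n-\lambda_j^n)\langle P_i,P_j\rangle=0 ,
\]
the prefactor vanishing when $i=j$ and orthogonality handling $i\neq j$.

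With these in place, the proof of the preceding theorem runs identically with $m$ replaced by $nm$, $\ell$ by $\ell^n$, and the graph norm taken to be that of ${\bf L}^n$ (equivalently the $r/2$ graph norm of ${\bf L}$ at $r=2n$). Concretely, Lemma \ref{l-graphnorm} gives the orthogonal splitting $\cD^n\ti{max}=\cD^n\ti{min}\oplus\cD^n_+\oplus\cD^n_-$ with $\dim(\cD^n_+\oplus\cD^n_-)=2nm$; projecting the eigenfunctions onto $\cD^n_+\oplus\cD^n_-$ and invoking Lemma \ref{t-norms} together with the density of $\spa\{P_k\}$ in the $2n$th left-definite space shows the projections span the $nm$-dimensional space $\cD_{{\bf L}^n}\ominus\cD^n\ti{min}$, whence a suitable $\{P_{k_1},\dots,P_{k_{nm}}\}$ is linearly independent modulo $\cD^n\ti{min}$. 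Feeding these into the GKN2 Theorem \ref{t-gkn2} yields a self-adjoint extension that, by the same containment chain ${\bf L}^n\subseteq\widetilde{{\bf L}^n}\subseteq(\widetilde{{\bf L}^n})^*\subseteq({\bf L}^n)^*={\bf L}^n$ as in the preceding proof, equals ${\bf L}^n$. The main obstacle is therefore not a new analytic difficulty but the careful bookkeeping confirming that the entire left-definite apparatus---deficiency indices scaling to $(nm,nm)$, persistence and orthogonality of the eigenfunctions in $\cH_{2n}$, and the norm equivalence of Lemma \ref{t-norms} for the graph norm of ${\bf L}^n$---genuinely transfers to the expression $\ell^n$ at level $r=2n$.
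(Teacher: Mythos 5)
Your proposal is correct and takes essentially the same approach as the paper: identify the $2n$th left-definite space with $\cD({\bf L}^n)$ via Corollary \ref{t-comppower}, transfer eigenfunction persistence (Theorem \ref{t-leftdefortho}) and the Glazman symmetry computation to $\ell^n$, rerun the preceding theorem's graph-norm projection argument with $m$ replaced by $nm$, and conclude with the GKN2 Theorem \ref{t-gkn2}. The paper's own proof is merely a terser statement of exactly this reduction, so your write-up differs only in the level of detail.
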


\begin{proof}
The $2n$th left-definite space exists and is unique for the operator ${\bf L}$. It also possesses the complete set of orthogonal eigenfunctions that are in the domain of ${\bf L}$, and their spectra coincide. Glazman symmetry conditions and the linear independence of $\{P_{k_1},\dots,P_{k_{nm}}\}$ modulo the minimal domain follow by the same argument as the previous theorem. Indices matching the $2n$th left-definite space with the $n$th power composition of the operator follows from Corollary \ref{t-comppower}. The result follows by the GKN2 Theorem.
\end{proof}

This converse implication allows left-definite spaces to be generated when the operator ${\bf L}$ contains a complete orthogonal system of eigenfunctions. This provides a complete answer to the question of which functions should be considered as GKN conditions to create left-definite spaces. The only further improvement would be to explicitly say which eigenfunctions were sufficient for this purpose. The next section seeks to answer this question when the operator stems from a Sturm--Liouville differential expression.

\section{Ramifications and Conjecture}\label{s-conj}
Let ${\bf L}^n$ be a self-adjoint operator defined by left-definite theory on $L^2[(a,b),w]$ with domain $\cD_{\bf L}^n$ that includes a complete system of orthogonal eigenfunctions. Let ${\bf L}^n$ operate on its domain via $\ell^n[\fdot]$, a differential operator of order $2n$, where $n\in\NN$, generated by composing a Sturm--Liouville differential operator with itself $n$ times. Furthermore, let $\ell^n$ be an extension of the minimal operator ${\bf L}^n\ti{min}$, which has deficiency indices $(m,m)$.

As we are working with coupled boundary conditions for the $n$th power of an operator, it suffices to consider $m=n$. Let us explain why. The case where $m=n$ corresponds to both endpoints of $\ell$ being limit circle. If only one endpoint is limit point, then one GKN condition is still necessary for $\ell$. This GKN condition then imposes a restriction at the limit circle endpoint, while it is just satisfied trivially on the limit point side by all functions in the maximal domain. If both endpoints are limit point, then no boundary conditions are needed. In that case ${\bf L}\ti{min}$ is essentially self-adjoint. Therefore, we assume without loss of generality that the deficiency indices are $(n,n)$ in this section. A more in depth discussion can be found in \cite{BDG, GZ, N, WW}. 

Enumerate the orthogonal eigenfunctions as $\{P_k\}_{k=0}^{\infty}$. Define the following domains:
\begin{align*} 
\cA_n&:=\left\{f:(a,b)\to \CC~\Big|~f,f',\dots,f^{(2n-1)}\in AC\ti{loc}(a,b);
(p(x))^n f^{(2n)}\in L^2[(a,b),w]\right\},
\\
\cB_n&:=\left\{f\in \cD\ti{max}^n~\Big|~
[f,P_j]_n\Big|_a^b=0 \text{ for }j=0,1,\dots,n-1\right\}, 
\\
\cC_n&:=\left\{f\in \cD\ti{max}^n~\Big|~
[f,P_j]_n\Big|_a^b=0 \text{ for any }n \text{ distinct }j\in\NN \right\}, \text{ and}
\\
\cF_n&:=\left\{f\in \cD\ti{max}^n~\Big|~(a_j(x)
y^{(j)}(x))^{(j-1)}\Big|_a^b=0 \text{ for }j=1,2,\dots,n
\right\}.
\end{align*}

The $p(x)$ above is from the standard definition of a Sturm-Liouville differential operator, given in equation \eqref{d-sturmop}, and the $a_j(x)$'s are from the Lagrangian symmetric form of the operator in \eqref{e-lagrangian}.
These domains seem very different, yet progress has already been made in this paper and elsewhere on the equality of these domains. The $n$th left-definite domain, $\cD_{\bf L}^n$ is found to be equal to $\cA_n$ for the Legendre differential operator in \cite[Section 7.5]{LWOG}. A general form for $\cD_{\bf L}^n$ is missing from the literature, so we will assume it is equal to $\cA_n$ for the rest of this section. Indeed, the main condition of $\cA_n$ simply involves the term associated with $f^{(2n)}$ when $\ell^n[f]$ is decomposed into a sum of derivatives of $f$.
 There is a proof of $\cA_n\subseteq \cF_n$ in \cite{LWOG} for the special case where the differential operator $\ell^n[\fdot]$ denotes the $n^{th}$ composite power of the Legendre differential expression and the eigenfunctions $\{P_k\}_{k=0}^{\infty}$ are the Legendre polynomials. This scenario for small $n$ was discussed in Section \ref{s-legendre}. The conditions in $\cF_n$ are particularly significant as they represent easily testable conditions that are not in the GKN format. 

In Section \ref{s-results} it was shown that by a proper re-enumeration of the eigenfunctions $\{P_k\}_{k=0}^{\infty}$, we have $\cA_n=\cB_n$. There we also proved $\cA_n=\cB_n=\cC_n$ for n=2. However, a proof of the general $n$ case for $\cA_n=\cB_n=\cC_n$ is elusive at this stage. It is obvious that $\cB_n\subset \cC_n$.

\begin{conj}\label{c-main}
Let ${\bf L}^n$ be a self-adjoint operator defined by left-definite theory on $L^2[(a,b),w]$ with domain $\cD_{\bf L}^n$ that includes a complete system of orthogonal polynomial eigenfunctions. Let ${\bf L}^n$ operate on its domain via $\ell^n[\fdot]$, a differential operator of order $2n$, where $n\in\NN$, generated by composing a Sturm--Liouville differential operator with itself $n$ times. Furthermore, let ${\bf L}^n$ be an extension of the minimal operator ${\bf L}^n\ti{min}$, which has deficiency indices $(n,n)$. Then $\cA_n=\cB_n=\cC_n=\cF_n=\cD_{\bf L}^n$ $\forall n\in\NN$.
\end{conj}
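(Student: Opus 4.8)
The plan is to prove the five-fold equality by a chain of inclusions, leaning on the machinery already assembled in Sections \ref{s-framework} and \ref{s-results}. Under the standing assumption $\cD_{\bf L}^n = \cA_n$, the theorems of Section \ref{s-results} already give $\cA_n = \cB_n$ after a suitable re-enumeration of the eigenfunctions, and the inclusion $\cB_n \subseteq \cC_n$ is immediate. Thus the conjecture reduces to two genuinely new links: $\cC_n \subseteq \cB_n$, which closes the loop among the GKN-type domains, and $\cA_n = \cF_n$, which matches the GKN description to the explicit pointwise boundary conditions. I would treat these separately, since they call on different tools.

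For $\cC_n \subseteq \cB_n$ the idea is to exploit that every eigenfunction already lies in each of these domains. Fix any admissible family of $n$ distinct eigenfunctions $P_{k_1},\dots,P_{k_n}$ defining $\cC_n$. By \eqref{e-orthoeigens} these satisfy the Glazman symmetry conditions \eqref{e-gkn1b} automatically, so \emph{provided they are linearly independent modulo $\cD\ti{min}^n$} the GKN2 Theorem (Theorem \ref{t-gkn2}) makes $\cC_n$ the domain of a self-adjoint extension $\widetilde{\bf L}$ of ${\bf L}^n\ti{min}$. Again by \eqref{e-orthoeigens}, \emph{every} eigenfunction $P_k$ satisfies $[P_k,P_{k_i}]_n|_a^b = 0$, so the complete orthogonal system sits inside $\cD_{\widetilde{\bf L}}$, and the sandwiching argument from Section \ref{s-results} then forces $\widetilde{\bf L} = {\bf L}^n$, whence $\cC_n = \cB_n$. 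The main obstacle is exactly the parenthetical hypothesis: one must show that \emph{any} family of $n$ distinct polynomial eigenfunctions with the even/odd balance required by Proposition \ref{t-evenodd} is linearly independent modulo the minimal domain. By Proposition \ref{p-matrix} it suffices to establish the nonvanishing of $\det({\bf B}_n)$, and I expect the cleanest route is a closed-form or recursive expression for this determinant in terms of the digamma-type entries \eqref{e-pq}, rather than an argument case by case.

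For $\cA_n = \cF_n$ the plan is to translate the sesquilinear conditions defining $\cB_n$ (which equals $\cA_n$) directly into the pointwise conditions of $\cF_n$. Writing $\ell^n$ in the Lagrangian symmetric form \eqref{e-lagrangian} and integrating Green's formula \eqref{e-greens} by parts produces a boundary bilinear form in the endpoint values of $f,g$ and their derivatives up to order $2n-1$. Testing against $g=P_j$ of degree $j<n$ annihilates all derivatives of order exceeding $j$, so the $n$ conditions $[f,P_j]_n|_a^b=0$ for $j=0,\dots,n-1$ reduce to a triangular system that successively exposes the terms $(a_i(x) f^{(i)}(x))^{(i-1)}|_a^b$ for $i=1,\dots,n$. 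Showing that this system cuts out the same subspace of boundary data as the defining conditions of $\cF_n$ would yield $\cB_n = \cF_n$, hence $\cA_n = \cF_n$. The inclusion $\cA_n \subseteq \cF_n$ is already available for Legendre from \cite{LWOG}; the content here is the reverse inclusion and, more seriously, the extension beyond Legendre to a general Sturm--Liouville system with complete orthogonal polynomial eigenfunctions, where the interplay between the coefficients $a_i$ and the boundary asymptotics of the $P_j$ must be controlled uniformly.

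I expect the principal obstacle to lie in the first link: proving the general linear-independence statement, equivalently $\det({\bf B}_n)\ne 0$, is what keeps the whole conjecture open, and it is the one step that does not follow from soft self-adjointness arguments. A secondary but still substantial difficulty is removing the standing assumption $\cD_{\bf L}^n = \cA_n$ by identifying the general form of the left-definite domain $\cD_{\bf L}^n$; until that is settled unconditionally, the chain only proves equality of the four boundary-condition domains with $\cA_n$, not with the left-definite domain itself.
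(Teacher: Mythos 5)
Your two easy links ($\cA_n=\cB_n$ via Section \ref{s-results} under the standing assumption $\cD_{\bf L}^n=\cA_n$, and $\cB_n\subseteq\cC_n$) match the paper, but at both links you call genuinely new your plan departs from the paper's proof, and in both places there is a real gap. For $\cC_n\subseteq\cB_n$ you invoke Theorem \ref{t-gkn2} plus the sandwich argument, conditional on \emph{every} parity-balanced family of $n$ distinct eigenfunctions (Proposition \ref{t-evenodd}) being linearly independent modulo $\cD\ti{min}^n$, i.e.\ on $\det({\bf B}_n)\neq 0$ for arbitrary admissible index sets. That is exactly the conjecture left open in Section \ref{s-legendre} (settled there only for $n=2$ and special cases of $n=3,4$), and the paper states explicitly that the general-$n$ proof of $\cA_n=\cB_n=\cC_n$ along these lines is elusive; so your route reduces the statement to another open problem and cannot deliver even a conditional proof. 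The paper's essential move is to sidestep the determinant problem entirely by inserting $\cF_n$ into the chain: it proves $\cB_n\subseteq\cF_n$ by induction on $n$ (Theorem \ref{t-bd}), where at each level only the single condition $[f,1]_n|_a^b=0$ is used together with the nesting $\cB_n\subset\cB_{n-1}\subseteq\cF_{n-1}$, and then closes the loop $\cF_n\subseteq\cC_n\subseteq\cB_n$ analytically.

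The second gap is in your plan for $\cA_n=\cF_n$: the claim that testing against $P_j$, $j<n$, yields a triangular system cutting out the same subspace of boundary data as the conditions $(a_if^{(i)})^{(i-1)}|_a^b=0$ is not tenable as pure linear algebra. By \eqref{e-sesqui}, for $j\ge 1$ the form $[f,P_j]_n$ contains the boundary functionals $(a_kf^{(k)})^{(k-i)}|_a^b$ with $i\ge 2$, and these are \emph{not} linear combinations of the $n$ functionals defining $\cF_n$; so the hard direction $\cF_n\subseteq\cB_n\cap\cC_n$ needs analytic input, which is the heart of the paper's proof. Under the extra hypothesis that $f\in\cF_n$ implies $f'',\dots,f^{(2n-2)}\in L^2[(a,b),dx]$, the paper shows each limit $\lim_{x\to b^-}(a_kf^{(k)})^{(k-i)}$, $i\ge 2$, vanishes individually by a comparison argument: if such a limit were $c_2\neq 0$, dividing by the factor $(1-x^2)^2$ (respectively $(1-x)^{\al+2}(1+x)^{\beta+2}$ for Jacobi, $x^{\al+2}$ for Laguerre) contained in the derivatives of $a_k$ exhibits a quantity that lies in $L^1$ by Cauchy--Schwarz yet dominates $r_2/(1-x^2)^2\notin L^1$, a contradiction. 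This makes $[f,P_s]_n|_a^b=0$ for \emph{every} eigenfunction $P_s$ simultaneously, which is what ties $\cC_n$ into the chain with no linear-independence input whatsoever. That mechanism, and the explicit $L^2$ hypothesis it requires, are absent from your proposal; without them your second link fails, and your first link remains conditional on an open conjecture, whereas the paper obtains the equalities conditionally on assumptions it can state and use.
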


This conjecture extends one made in \cite[Chapter 9]{LWOG} by the broad conditions in $\cC_n$. Here we prove some subcases.

\begin{theo}\label{t-bd}
Under the hypotheses of Conjecture \ref{c-main}, and the assumption that $\cA_n=\cB_n$,
we have $\cB_n\subseteq \cF_n$ $\forall n\in\NN$.
\end{theo}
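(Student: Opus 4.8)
The plan is to use the GKN structure of $\cB_n$ from Section~\ref{s-results} to reduce the inclusion to a check on a convenient set of generators, in place of the direct computation of \cite{LWOG}. Under the hypotheses of Conjecture~\ref{c-main} (so that $m=n$) together with the assumption $\cA_n=\cB_n$, the space $\cB_n$ is exactly the self-adjoint extension cut out by the $n$ conditions $[f,P_j]_n|_a^b=0$, $j=0,\dots,n-1$; its inducing functions $P_0,\dots,P_{n-1}$ are linearly independent modulo $\cD\ti{min}^n$ and satisfy the Glazman symmetry conditions \eqref{e-gkn1b} by \eqref{e-orthoeigens}. Hence, by the GKN2 Theorem~\ref{t-gkn2} together with $\dim(\cB_n/\cD\ti{min}^n)=n$ (valid since $\cB_n$ is a self-adjoint extension of the deficiency-$(n,n)$ operator $\cD\ti{min}^n$), one has $\cB_n=\cD\ti{min}^n\dotplus\spa\{P_0,\dots,P_{n-1}\}$. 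Since $\cF_n$ is a linear subspace of $\cD\ti{max}^n$, it suffices to prove that $\cD\ti{min}^n\subseteq\cF_n$ and that $P_i\in\cF_n$ for every $i=0,\dots,n-1$.

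The inclusion $\cD\ti{min}^n\subseteq\cF_n$ is the soft part. Expanding Green's formula \eqref{e-greens} by repeated integration by parts expresses $[f,g]_n|_a^b$ through the endpoint values of the quasi-derivatives of $f$ and $g$ attached to the Lagrangian coefficients $a_j$ of \eqref{e-lagrangian}; the functionals $f\mapsto(a_jf^{(j)})^{(j-1)}(a)$ and $f\mapsto(a_jf^{(j)})^{(j-1)}(b)$ occurring in $\cF_n$ are among the resulting boundary objects. As all such boundary values vanish on the minimal domain (see \cite[Section 17.2]{N}), we get $(a_jf^{(j)})^{(j-1)}|_a^b=0$ for $f\in\cD\ti{min}^n$, i.e.\ $\cD\ti{min}^n\subseteq\cF_n$. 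For the generators $P_i$ I would split the $n$ conditions of $\cF_n$ at the index $j=i$: when $j>i$ we have $P_i^{(j)}\equiv0$, so $(a_jP_i^{(j)})^{(j-1)}\equiv0$ and the $j$th condition holds trivially, indeed at each endpoint separately.

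The real content is the range $1\le j\le i$, where $P_i^{(j)}$ is a nonzero polynomial. Here the plan is to show that $(a_jP_i^{(j)})^{(j-1)}$ already vanishes at each endpoint, and the decisive input is a structural fact about the Lagrangian coefficients: at a finite (singular) endpoint $c$, the coefficient $a_j$ vanishes to order at least $j$. Granting this, $a_jP_i^{(j)}$ vanishes to order $\ge j$ at $c$ (since $P_i^{(j)}$ is smooth there), so differentiating $j-1$ times leaves a quantity vanishing to order $\ge1$, hence equal to $0$ at $c$; this gives $P_i\in\cF_n$. For Legendre the bookkeeping is transparent---for $n=2$ one finds $a_1=2(1-x^2)$ and $a_2=(1-x^2)^2$, of vanishing orders $1$ and $2$ at $\pm1$---and it is precisely this accounting that must be carried out in general.

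The main obstacle is the order-of-vanishing statement $\mathrm{ord}_c(a_j)\ge j$, established uniformly over the admissible operators. I expect this to follow by induction on $n$, tracking how each composition with $\ell$---whose leading coefficient $p$ vanishes at the singular endpoints---contributes a further factor that forces the claimed order on $a_j$. At an infinite endpoint (as for Hermite or Laguerre) the polynomial notion of order is replaced by decay coming from the weight $w$, and the delicate point becomes verifying that the limits defining the $\cF_n$ functionals exist and vanish there; this is where the integrability built into $\cA_n=\cB_n$ is expected to do the work. Once the order-of-vanishing lemma is secured, combining it with the two generator checks and the decomposition of $\cB_n$ yields $\cB_n\subseteq\cF_n$ for all $n\in\NN$.
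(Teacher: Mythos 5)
Your overall strategy --- decomposing $\cB_n=\cD\ti{min}^n\dotplus\spa\{P_0,\dots,P_{n-1}\}$ (legitimate under the theorem's hypotheses via a dimension count on the quotient $\cD\ti{max}^n/\cD\ti{min}^n$, where the boundary form is nondegenerate) and then checking the inclusion into $\cF_n$ on generators --- is genuinely different from the paper's proof, which is an induction on $n$ that only ever manipulates coupled quantities like $[f,1]_n|_a^b$. But there is a serious gap in your ``soft part.'' You justify $\cD\ti{min}^n\subseteq\cF_n$ by asserting that the boundary values $(a_jf^{(j)})^{(j-1)}(a)$ and $(a_jf^{(j)})^{(j-1)}(b)$ vanish on the minimal domain, citing \cite[Section 17.2]{N}. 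That fact is available at \emph{regular} endpoints; the endpoints relevant here are \emph{singular} (Legendre/Jacobi at $\pm1$, Laguerre at $0$ and $\infty$), and at singular endpoints Naimark characterizes $\cD\ti{min}^n$ only through the vanishing of the sesquilinear form $[f,g]_n$ against all $g\in\cD\ti{max}^n$. Testing with $g=1$, for instance, only yields that the \emph{sum} $\sum_k(-1)^k(a_kf^{(k)})^{(k-1)}$ has vanishing limits, not the individual terms; worse, it is not known a priori that the individual limits $(a_jf^{(j)})^{(j-1)}(c)$ even \emph{exist} for $f\in\cD\ti{min}^n$. Establishing existence and vanishing of precisely these limits is the analytic crux of the theorem, and it is exactly what the paper's induction is engineered to avoid: at stage $n$, the first $n-1$ coupled limits exist and vanish because $\cB_n\subseteq\cB_{n-1}\subseteq\cF_{n-1}$ by the inductive hypothesis, and the last one then exists and vanishes because $[f,1]_n|_a^b$ exists by Theorem \ref{t-limits}. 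Note you also cannot repair this by observing $\cD\ti{min}^n\subseteq\cB_n$ and invoking the theorem --- that is circular.

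The second gap is the order-of-vanishing lemma $\mathrm{ord}_c(a_j)\ge j$, which you leave as an expectation rather than prove. This one is fillable: under the hypotheses of Conjecture \ref{c-main} the eigenfunctions are orthogonal polynomials, so the Bochner classification applies and the Lagrangian symmetric coefficients are explicitly known ($a_k\propto(1-x^2)^k$ for Legendre, $a_k\propto(1-x)^{\al+k}(1+x)^{\beta+k}$ for Jacobi, $a_k\propto x^{\al+k}e^{-x}$ for Laguerre), which handles your generators $P_i$ at finite singular endpoints and gives the needed decay at the infinite Laguerre endpoint. So your generator check can be completed by citation, but as written the proposal rests on two unproven statements, and filling the first requires analysis of essentially the same depth as the theorem itself.
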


The proof utilizes an explicit form of the sesquilinear form, as opposed to the one given in equation \eqref{e-greens}. This representation allows for more precision in defining which limits are disappearing and which are remaining as we approach the endpoints of $(a,b)$. Specifically, for the Sturm--Liouville operators of interest the expression $q(x)/w(x)$ is a constant, and we have
\begin{align} \label{e-sesqui}
[f,g]_n(x)=\sum_{k=1}^n\sum_{j=1}^k (-1)^{k+j}\Big\{(a_k(x)&\overline{g}^{(k)}(x))^{(k-j)}f^{(j-1)
}(x)\\
&-(a_k(x)f^{(k)}(x)^{(k-j)})\overline{g}^{(j-1)}(x)\Big\},\nonumber
\end{align}
where the $a_k(x)$'s are again from the Lagrangian symmetric form of the differential operator \eqref{e-lagrangian}. 
For a reference see e.g.~\cite[Section 10]{LW15}.

\begin{proof}[Proof of Theorem \ref{t-bd}]
We proceed by induction on $n$. The base case is proven by a simple application of Green's Formula for the sesquilinear form. Let $f\in \cB_1$ and recall that in the Strum--Liouville systems of interest we have $q\equiv 0$. We compute
\begin{align*}
0=[f,P_0]_1|_a^b=[f,1]_1|_a^b&=\langle \ell[f],1\rangle_{L^2[(a,b),w]}-\langle f,\ell[1]\rangle_{L^2[(a,b),w]} \\
&=\int_a^b\left(\dfrac{1}{w(x)}[p(x)f'(x)]'\right)w(x)dx-0 \\
&=\lim_{x\to b^-}(p(x)f'(x))-\lim_{x\to a^+}(p(x)f'(x)).
\end{align*}

Assume that $\cB_{n-1}\subseteq \cF_{n-1}$ as the inductive hypothesis. Corollary \ref{t-comppower} shows that 
$$\cD_{\bf L}^n=\cV_{2n}=\cA_n=\cB_n\subset \cB_{n-1}\subseteq \cF_{n-1}.$$
The limits in the description of $\cB_{n-1}$ exist, and are finite, by Theorem \ref{t-limits}. The inclusion $\cB_n\subset \cB_{n-1}$ can be shown using Green's formula. Consequently, $f\in \cB_n$ implies that $\lim_{x\to b^-}(a_i(x)f^{(i)}(x))^{(i-1)}-\lim_{x\to a^+}(a_i(x)f^{(i)}(x))^{(i-1)}=0$, $\forall i=1,\dots,n-1$. 

Furthermore, the definition of $\cB_n$ includes the condition $[f,P_0]_n|_a^b=[f,1]_n|_a^b=0$. Hence, the only nonzero terms in equation \eqref{e-sesqui} are when $j=1$, yielding $n$ terms. The first $n-1$ of these terms are precisely those given in the definition of $\cF_{n-1}$, explicitly:
\begin{align*}
0=[f,1]_n|_a^b&=\lim_{x\to b^-}[-a_1f'+(a_2f'')'-\dots+(-1)^n(a_nf^{(n)})^{(n-1)}] \\
&\quad\,-\lim_{x\to a^+}[-a_1f'+(a_2f'')'-\dots+(-1)^n(a_nf^{(n)})^{(n-1)}] \\
&=\lim_{x\to b^-}(a_nf^{(n)})^{(n-1)}-\lim_{x\to a^+}(a_nf^{(n)})^{(n-1)}.
\end{align*}
Hence also the last condition in $\cF_n$ is satisfied, and thus $f\in \cF_n$.
The claim that $\cB_n\subseteq \cF_n$ follows by induction on $n$. 
\end{proof}

For the reverse inclusion, the proof will involve working with the sesquilinear form explicitly. Hence, the differences between Sturm--Liouville operators arise primarily in the definition of the $a_k(x)$'s in equation \eqref{e-sesqui}. The following theorem is formulated for the Legendre operator.
%



\begin{theo}
Let the hypotheses of Conjecture \ref{c-main} hold, where $\ell^n$ is the classical Legendre differential expression given in \eqref{e-legendre} Assume that for all $f\in \cF_n$, $f'',\dots,f^{(2n-2)}\in L^2[(a,b),dx]$. Then we have $\cF_n\subseteq \cC_n\subseteq \cB_n$ $\forall n\in\NN$.
\end{theo}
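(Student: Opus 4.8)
The plan is to prove the two inclusions separately, in each case transferring the boundary behaviour to the finite-dimensional quotient $V:=\cD^n\ti{max}/\cD^n\ti{min}$, which carries the nondegenerate form induced by $[\fdot,\fdot]_n\big|_a^b$ and has dimension $2n$. I represent an arbitrary $f\in\cD^n\ti{max}$ modulo $\cD^n\ti{min}$ in a basis consisting of $n$ Legendre polynomials and $n$ Legendre functions of the second kind, as in Section \ref{s-legendre}, writing $f\equiv\sum_i\alpha_iP_i+\sum_l\beta_lQ_{k_l}$. The decisive preliminary fact, recorded in \eqref{e-orthoeigens} and in the proof of Proposition \ref{p-matrix}, is that $[P_i,P_j]_n\big|_a^b=0$ for all $i,j\in\NN_0$; since $[\fdot,P_j]_n\big|_a^b$ vanishes on $\cD^n\ti{min}$, this gives $[f,P_j]_n\big|_a^b=\sum_l\beta_l\,[Q_{k_l},P_j]_n\big|_a^b$, so that the pairing of $f$ against any polynomial eigenfunction sees only the second-kind components $\beta_l$.

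The inclusion $\cC_n\subseteq\cB_n$ is then pure linear algebra. If $f\in\cC_n$, there are $n$ distinct indices $j_1,\dots,j_n$, forming genuine GKN conditions and hence linearly independent modulo $\cD^n\ti{min}$, with $[f,P_{j_i}]_n\big|_a^b=0$. By the previous paragraph this is the homogeneous system $\sum_l\beta_l\,[Q_{k_l},P_{j_i}]_n\big|_a^b=0$, whose coefficient matrix agrees, up to transposition and sign, with the block ${\bf B}_n$ of Proposition \ref{p-matrix}. For such an admissible index choice ${\bf B}_n$ has full rank (this is the content of Proposition \ref{p-matrix}, of which the parity balance of Proposition \ref{t-evenodd} is a necessary precondition), so $\beta_l=0$ for every $l$. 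Thus $f$ is, modulo $\cD^n\ti{min}$, a pure combination of Legendre polynomials, and invoking $[P_i,P_j]_n\big|_a^b=0$ once more gives $[f,P_j]_n\big|_a^b=0$ for every $j$, in particular for $j=0,\dots,n-1$; that is, $f\in\cB_n$.

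For $\cF_n\subseteq\cC_n$ it suffices, since $\cB_n\subseteq\cC_n$ is immediate, to prove the stronger inclusion $\cF_n\subseteq\cB_n$, i.e. that the natural boundary conditions also annihilate the $\beta_l$. Here I would substitute $g=P_j$ into the explicit form \eqref{e-sesqui} and examine the two families of boundary terms $(a_kP_j^{(k)})^{(k-i)}f^{(i-1)}$ and $(a_kf^{(k)})^{(k-i)}P_j^{(i-1)}$ as $x\to\pm1$. Because each Legendre Lagrangian coefficient $a_k(x)$ carries a zero of $(1-x^2)$ at $\pm1$ of order matching the number of derivatives taken (the precise orders being read off from the Lagrangian form), the quantities $(a_kP_j^{(k)})^{(k-i)}$ vanish at $\pm1$ for every $i\ge1$; hence the first family contributes nothing and $P_j$ enters only through $P_j^{(i-1)}$ in the second family. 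The terms with $i=1$ are precisely the expressions $(a_kf^{(k)})^{(k-1)}$ whose boundary values define $\cF_n$, and in the case $j=0$ the whole pairing collapses to $[f,1]_n\big|_a^b=\sum_k(-1)^k(a_kf^{(k)})^{(k-1)}\big|_a^b=0$, exactly the base computation of Theorem \ref{t-bd}. The extra hypothesis $f'',\dots,f^{(2n-2)}\in L^2[(a,b),dx]$ is what controls the remaining intermediate terms ($i\ge2$): combined with the $(1-x^2)$-weights in $a_k$ it forces each limit $(a_kf^{(k)})^{(k-i)}(x)$ to vanish individually at $x=\pm1$, and a bookkeeping induction on $n$ paralleling Theorem \ref{t-bd} then yields $[f,P_j]_n\big|_a^b=0$ for $j=0,\dots,n-1$.

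The main obstacle is this last step. The conditions defining $\cF_n$ directly control only the top ($i=1$) layer $(a_kf^{(k)})^{(k-1)}$, whereas \eqref{e-sesqui} also produces the lower-order quantities $(a_kf^{(k)})^{(k-i)}$ with $2\le i\le k$, and these must be shown to vanish as genuine one-sided limits at $\pm1$, not merely as differences $\big|_a^b$. Establishing this amounts to converting the $L^2$-regularity of $f'',\dots,f^{(2n-2)}$ into quantitative decay of $f$ and its derivatives near $x=\pm1$, matched against the exact order of vanishing of each $a_k$ at the endpoints. Carrying out this estimate uniformly in $n$, rather than in the case-by-case explicit manner of Section \ref{s-legendre}, is the delicate part of the argument, and it is the sole place where the additional regularity assumption is genuinely used.
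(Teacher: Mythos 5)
There are two genuine gaps, one in each inclusion. For $\cC_n\subseteq\cB_n$, you solve the homogeneous system $\sum_l\beta_l\,[Q_{k_l},P_{j_i}]_n\big|_a^b=0$ by asserting that ``${\bf B}_n$ has full rank (this is the content of Proposition \ref{p-matrix}).'' It is not: Proposition \ref{p-matrix} proves only the conditional statement that \emph{if} ${\bf B}_n$ has full rank, \emph{then} the chosen functions form a basis modulo $\cD\ti{min}$, together with the identity $\det({\bf M}_n)=[\det({\bf B}_n)]^2$. Full rank of ${\bf B}_n$ for an arbitrary parity-balanced choice of indices---which is what you need, since the indices $j_1,\dots,j_n$ are handed to you by the definition of $\cC_n$ and are not at your disposal---is precisely the open Conjecture of Section \ref{s-legendre}, verified in the paper only for $n\le 5$ (numerically $n\le 16$) and for special index choices. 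The same objection applies to your opening move of writing $f\equiv\sum_i\alpha_iP_i+\sum_l\beta_lQ_{k_l}$ modulo $\cD^n\ti{min}$, and to the claim that the $P_{j_i}$ from the definition of $\cC_n$ are automatically linearly independent modulo $\cD^n\ti{min}$: both are again the conjecture. The paper never argues $\cC_n\subseteq\cB_n$ as a freestanding linear-algebra statement; instead it proves directly that every $f\in\cF_n$ (under the regularity hypothesis) satisfies $[f,P_s]_n\big|_a^b=0$ for \emph{every} $s$, which places $f$ in $\cB_n$ and in $\cC_n$ simultaneously.

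For $\cF_n\subseteq\cB_n$ you correctly isolate the crux---the quantities $(a_kf^{(k)})^{(k-j)}$ with $j\ge2$ must vanish as genuine one-sided limits at each endpoint, not merely as differences $\big|_a^b$---but you then leave this as an acknowledged ``obstacle,'' proposing to obtain it by converting the $L^2$-hypothesis into quantitative pointwise decay of $f$ near $\pm1$. That unproved step is exactly the content of the paper's proof, and the paper fills it with a much softer argument requiring no decay estimates at all: suppose $\lim_{x\to 1^-}(a_kf^{(k)})^{(k-2)}=c_2\neq0$. Since $a_k(x)=C(n,k)(1-x^2)^k$, every term of the Leibniz expansion $\sum_{i=0}^{k-2}\binom{k-2}{i}a_k^{(k-2-i)}f^{(k+i)}$ retains a factor $(1-x^2)^{2}$, so dividing by $(1-x^2)^2$ leaves a finite sum of polynomials multiplied by the derivatives $f^{(k)},\dots,f^{(2k-2)}$, which lie in $L^2[(a,b),dx]$ by hypothesis (here $2\le k\le n$); by Cauchy--Schwarz that sum is in $L^1[(a,b),dx]$, yet it dominates $\tfrac{c_2}{2}(1-x^2)^{-2}$ near $x=1$, which is not integrable---a contradiction. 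Hence each such limit is zero, multiplying by the polynomial $P_s$ changes nothing by elementary limit laws, and only the $j=1$ layer survives, which is exactly what the defining conditions of $\cF_n$ control (the base computation you attribute to Theorem \ref{t-bd}). Without this contradiction-by-integrability argument, or an equivalent substitute, your proposal establishes neither inclusion; with it, your outline of the second inclusion would essentially coincide with the paper's proof.
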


\begin{proof}
Fix $n\in\NN$. The differential expression $\ell^n$ can be written in Lagrangian symmetric form as 
\begin{align}
\ell^n[f](x)=\sum_{k=1}^n(-1)^k{n \brace k}_2((1-x^2)^k f^{(k)}(x))^{(k)},
\end{align}
where ${n \brace k}_2$ denote the Legendre--Stirling numbers of the second kind, see \cite{AGL} for more. Hence, $a_k(x)=C(n,k)(1-x^2)^k$, where $C(n,k)$ is a constant.
Deconstruct the explicit expression for $[f,P_s]_n|_a^b$ into the following:
\begin{align*}
LHS&=\sum_{k=1}^n\sum_{j=1}^k(-1)^{k+j}[a_k(x)P_s^{(k)}(x)]^{(k-j)}f^{(j-1)}(x), \\
RHS&=\sum_{k=1}^n\sum_{j=1}^k(-1)^{k+j+1}[a_k(x)f^{(k)}(x)]^{(k-j)}P_s^{(j-1)}(x).
\end{align*}
Assume that $s\geq n$ so that all terms are nonzero. The case where $s<n$ will immediately follow.
The assumption that $f\in \cF_n$ means that
\begin{align*}
\lim_{x\to b^-}[a_k(x)f^{(k)}(x)]^{(k-1)}-\lim_{x\to a^+}[a_k(x)f^{(k)}(x)]^{(k-1)}=0,
\end{align*}
for $k=1,\dots,n$, so terms of this form in the RHS will not be of concern. At a single endpoint, consider limits of the form
\begin{align*}
\lim_{x\to a^+(\text{or }b^-)}[a_k(x)f^{(k)}(x)]^{(k-2)},
\end{align*}
for $k=2,\dots,n$. Assume $c_2\neq 0$. Without loss of generality, assume that at the endpoint $b$ the above limit is equal to $c_2>0$ and is finite. Define $r_2:=c_2/2$. Then
\begin{align*}
r_2<\lim_{x\to b^-}[a_k(x)f^{(k)}(x)]^{(k-2)}=\lim_{x\to b^-}\left(\sum_{i=0}^{k-2}\binom{k-2}{i}a_k^{(k-2-i)}(x)f^{(k+i)}(x)\right).
\end{align*}
Recall that $a_k(x)=C(n,k)(1-x^2)^k$, so each term on the right hand side will possess a factor of $(1-x^2)^2$ after differentiation. Dividing both sides by this factor yields
\begin{align}\label{e-divide}
\lim_{x\to b^-}\dfrac{r_2}{(1-x^2)^2}<\lim_{x\to b^-}\left(\sum_{i=0}^{k-2}\binom{k-2}{i}\widetilde{a}_k^{(k-2-i)}(x)f^{(k+i)}(x)\right)=:S_2,
\end{align}
where we use $$\widetilde{a}_k^{(k-2-i)}(x)=\dfrac{a_k^{(k-2-i)}(x)}{(1-x^2)^2}.$$
Note $k\geq 2$ necessarily here, so the relevant derivatives of $f(x)$ in $S_2$ are $f''(x),\dots,f^{(n-2)}(x)$, which are all in $L^2[(a,b),dx]$ by assumption. However, $(1-x^2)^k\in L^2[(a,b),dx]$ for all $k\in\NN_0$, and so are its derivatives because they are all polynomials. Hence, each summand in $S_2$ is in $L^1[(a,b),dx]$ by the Cauchy--Schwarz inequality, and the finite sum $S_2$ is then also in $L^1[(a,b),dx]$. It is apparent that
$$\lim_{x\to b^-}\dfrac{r_2}{(1-x^2)^2}=\infty.$$
The comparison test thus yields a contradiction to the fact that $S_2\in L^1[(a,b),dx]$. As $c_2\neq 0$ was arbitrary, we conclude
\begin{align*}
\lim_{x\to b^-}[a_k(x)f^{(k)}(x)]^{(k-2)}=0.
\end{align*}
A similar argument shows that the same result at the endpoint $a$. The method outlined above can be used to show \emph{mutatis mutandis} that
\begin{align*}
\lim_{x\to b^-}[a_k(x)f^{(k)}]^{(k-j)}=0
\end{align*}
for $2< j\leq n$. 
The eigenfunctions $P_s(x)$ are assumed to be polynomials so we may write $P_s(x)=\sum_{h=0}^s\alpha_h x^h$. Then the RHS can be broken down into a power of $x$ times the above limit for each value of $k,j<m$. Basic limit laws say that, for $h\in\NN$, 
\begin{align*}
\lim_{x\to b^-}[a_k(x)f^{(k)}]^{(k-j)}P_s(x)=\sum_{h=0}^s\left(\lim_{x\to b^-}\alpha_h x^h [a_k(x)f^{(k)}]^{(k-j)}\right)=0,
\end{align*}
by splitting up the product and using the fact that $a$ and $b$ are finite endpoints. This means
\begin{align*}
RHS&=\sum_{k=1}^n\sum_{j=1}^k(-1)^{k+j+1}[a_k(x)f^{(k)}(x)]^{(k-j)}P_s^{(j-1)}(x)=0
\end{align*}
for any $s\in\NN$. Likewise, 
\begin{align*}
LHS&=\sum_{k=1}^n\sum_{j=1}^k(-1)^{k+j}[a_k(x)P_s^{(k)}(x)]^{(k-j)}f^{(j-1)}(x)=0.
\end{align*}
The theorem now follows, as this collectively shows
$[f,P_s]_n|_a^b=0, \text{ for }s\in\NN.$
\end{proof}

The method of proof developed above applies to more than just the Legendre differential expression, with minimal alterations.

\begin{theo}
Let the hypotheses of Conjecture \ref{c-main} hold, where $\ell^n$ is a classical Jacobi or Laguerre differential expression with $\al,\beta>-1$ or $\al>-1$ respectively. Assume that for all $f\in \cF_n$, $f'',\dots,f^{(2n-2)}\in L^2[(a,b),dx]$. Then we have $\cF_n\subseteq \cC_n\subseteq \cB_n$ $\forall n\in\NN$.
\end{theo}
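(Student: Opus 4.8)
The plan is to rerun the proof of the preceding (Legendre) theorem almost verbatim, isolating the two places where the particular operator enters the argument: the explicit shape of the Lagrangian coefficients $a_k(x)$ appearing in the sesquilinear form \eqref{e-sesqui}, and the behaviour of these coefficients at the finite, limit-circle endpoints. For the Jacobi expression on $(-1,1)$ with weight $(1-x)^{\al}(1+x)^{\beta}$ one has $a_k(x)=C(n,k)(1-x)^{\al+k}(1+x)^{\beta+k}$, and for the Laguerre expression on $(0,\infty)$ with weight $x^{\al}e^{-x}$ one has $a_k(x)=C(n,k)x^{\al+k}e^{-x}$, with $C(n,k)$ a positive constant playing the role the Legendre--Stirling numbers played before. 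In both settings the eigenfunctions $P_s$ are polynomials, which is the only other structural input the Legendre argument used.

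First I would reduce, exactly as before, to showing that for $f\in\cF_n$ every boundary limit $\lim[a_k(x)f^{(k)}(x)]^{(k-j)}$ with $2\le j\le k\le n$ vanishes at each \emph{finite} endpoint; the $j=1$ terms are precisely the defining conditions of $\cF_n$. The infinite endpoint $\infty$ of the Laguerre interval is limit point, so by Theorem \ref{t-limits} and the standard vanishing of the form there, all such limits are automatically zero for maximal-domain functions; thus only the finite endpoints $\pm1$ (Jacobi) and $0$ (Laguerre) require the analytic argument. Fix such an endpoint together with the singular factor it carries --- say $(1-x)^{\al}$ near $x=1$ --- and suppose for contradiction that $\lim_{x\to1^-}[a_kf^{(k)}]^{(k-j)}=c\ne0$. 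Expanding by Leibniz, each summand of $[a_kf^{(k)}]^{(k-j)}$ contains a factor $a_k^{(k-j-i)}(x)$ whose lowest power of $(1-x)$ is $(1-x)^{\al+j}$; dividing through by $(1-x)^{\al+j}$ produces modified coefficients $\widetilde a_k^{(k-j-i)}$ behaving like a nonnegative power of $(1-x)$ near $x=1$.

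The crux is then the same integrability dichotomy as in the Legendre proof. Because $\al>-1$ (respectively $\beta>-1$, and $\al>-1$ for Laguerre) we have $\al+j>1$ for every $j\ge2$, so $(1-x)^{-(\al+j)}$ fails to be integrable near $x=1$; meanwhile the modified sum $S_j:=\sum_i\binom{k-j}{i}\widetilde a_k^{(k-j-i)}f^{(k+i)}$ lies in $L^1[(a,b),dx]$ by Cauchy--Schwarz, since each $\widetilde a_k^{(k-j-i)}$ is in $L^2[(a,b),dx]$ (its endpoint exponents are all positive, again thanks to $\al,\beta>-1$, while at $\infty$ the Gaussian factor $e^{-x}$ secures decay) and each $f^{(k+i)}$ ranges only over $f'',\dots,f^{(2n-2)}$, which lie in $L^2[(a,b),dx]$ by hypothesis. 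A nonzero $c$ would force $S_j$ to dominate a positive multiple of $(1-x)^{-(\al+j)}$ near $x=1$, contradicting $S_j\in L^1$. Iterating over $j=2,\dots,n$ and over the remaining finite endpoints --- dividing instead by $(1+x)^{\beta+j}$ near $x=-1$ and by $x^{\al+j}$ near $0$ --- annihilates all of these limits.

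Finally, since the $P_s$ are polynomials, multiplying the surviving factors $[a_kf^{(k)}]^{(k-j)}$ and $[a_kP_s^{(k)}]^{(k-j)}$ by the values $P_s^{(j-1)}$ and $f^{(j-1)}$, which are finite at finite endpoints, leaves their limits zero, so that $[f,P_s]_n\big|_a^b=0$ for every $s\in\NN$; this gives $\cF_n\subseteq\cC_n$, and $\cC_n\subseteq\cB_n$ is immediate from the definitions. The step I expect to be the genuine obstacle --- the one real departure from the Legendre proof --- is the treatment of the two different endpoint types for Laguerre: one must verify that the power-singularity contradiction still applies verbatim at the finite endpoint $0$ while the infinite endpoint is dispatched separately through the limit-point property and the exponential decay of $a_k$, rather than by the same integrability comparison. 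A secondary bookkeeping point is checking that the modified coefficients $\widetilde a_k$ are square-integrable over the \emph{whole} interval, not merely near the endpoint at which one divides, which for Jacobi rests on $\al,\beta>-1$ forcing positive exponents at both $\pm1$.
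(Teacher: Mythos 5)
Your proposal is correct and follows essentially the same route as the paper: the paper's proof also writes down the Jacobi and Laguerre Lagrangian symmetric forms, notes that $a_k(x)=C(n,k,\al,\beta)(1-x)^{\al+k}(1+x)^{\beta+k}$ (resp.\ $C(n,k,\al)x^{\al+k}e^{-x}$), and observes that the Legendre argument carries through verbatim because the factor divided out in analogy to \eqref{e-divide} --- $(1-x)^{\al+2}(1+x)^{\beta+2}$, resp.\ $x^{\al+2}$ --- has adequate blow-up at the singular endpoints precisely when $\al,\beta>-1$. Your additional bookkeeping (the limit-point dispatch of the Laguerre endpoint at $\infty$ and the square-integrability of the modified coefficients over the whole interval) is sound and only makes explicit what the paper leaves implicit.
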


\begin{proof}
Fix $n\in\NN$. The Jacobi differential expression $\ell_{\bf J}^n$ can be written in Lagrangian symmetric form as 
\begin{align*}
\ell_{\bf J}^n[f](x)=\dfrac{1}{(1-x)^{\al}(1+x)^{\beta}}\sum_{k=1}^n(-1)^k(C(n,k,\al,\beta)(1-x)^{\al+k}(1+x)^{\beta+k}f^{(k)}(x))^{(k)},
\end{align*}
where $C(n,k,\al,\beta)$ is a constant, and $\al,\beta>-1$. The above proof carries through the same as above, as the term divided through in analogy to equation \eqref{e-divide} will be $(1-x)^{\al+2}(1+x)^{\beta+2}$, and has adequate blow-up at the endpoints.

The Laguerre differential expression $\ell_{\bf L}^n$ can be written in Lagrangian symmetric form as 
\begin{align*}
\ell_{\bf L}^n[f](x)=\dfrac{1}{x^{\al}e^{-x}}\sum_{k=1}^n(-1)^k(C(n,k,\al)x^{\al+k}e^{-x}f^{(k)}(x))^{(k)},
\end{align*}
where $C(n,k,\al)$ is a constant, and $\al>-1$. The term divided through in analogy to equation \eqref{e-divide} will be $x^{\al+2}$, and has adequate blow-up at $0$. 

More details concerning the setup and properties of these differential equations can be found in \cite{D,ELT}.
\end{proof}

This effectively covers most of the classical differential equations which possess complete sets of orthogonal polynomial eigenfunctions. The Hermite equation was not discussed because it does not require boundary conditions of the above forms, as it is limit-point at both $-\infty$ and $\infty$.

The astute reader may have noticed that the explicit conditions of $\cF_n$ did not play a large part in the above proofs.
Indeed, the limit conditions in the definition of $\cF_n$ are necessary to prove the assumption that $f'',\dots,f^{(2n-2)}\in L^2[(a,b),dx]$, which was essential. This implication can be accomplished using an appropriate choice of the vectors $\psi$ and $\varphi$ in two applications of the so-called CHEL Theorem {\cite[Theorem 8.7]{LWOG}}, and subtracting them to form coupled boundary conditions. The limit conditions of $\cF_n$ arise as boundary terms from integrating $\ell^n$ in Lagrangian symmetric form, and can be used to show certain functions are in $L^2[(a,b),dx]$.

It is also important to note that the specific choice of limits in $\cF_n$ cannot be altered. Together, they ensure that $[f,P_0]_n|_a^b=[f,1]_n|_a^b=0$. The function $1$ also happens to be included in every classical orthogonal polynomial sequence so it is particularly applicable. 

Finally, with these few extra assumptions, Conjecture \ref{c-main} has been shown.

\begin{theo}
Assume the hypotheses of Conjecture \ref{c-main}, where $\ell^n$ is a classical Jacobi or Laguerre differential expression, with $\al,\beta>-1$ or $\al>-1$ respectively.
Assume $\cA_n=\cB_n$ and that $f\in \cF_n$ implies that $f'',\dots,f^{(2n-2)}\in L^2[(a,b),dx]$. Then $\cA_n=\cB_n=\cC_n=\cF_n=\cD_{\bf L}^n$ $\forall n\in\NN$.
\end{theo}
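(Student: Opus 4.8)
The plan is to prove the five-fold equality purely by assembling inclusions that the earlier theorems of this section have already supplied, so that no fresh analytic estimate is needed. The only genuine content lives in the component results; what remains is to verify that their hypotheses are met in the present situation and then to close a cycle of inclusions.

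First I would record the two standing identifications. Throughout this section $\cD_{\bf L}^n$ is taken equal to $\cA_n$, and the present hypotheses supply $\cA_n = \cB_n$ directly; hence $\cD_{\bf L}^n = \cA_n = \cB_n$ is in hand at once, and the remaining work is to establish $\cB_n = \cF_n = \cC_n$.

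Next I would feed the present hypotheses into the two relevant theorems. Theorem~\ref{t-bd} applies verbatim, since its only extra requirement beyond Conjecture~\ref{c-main} is $\cA_n = \cB_n$, which we assume; it yields $\cB_n \subseteq \cF_n$. The preceding theorem (the Jacobi/Laguerre inclusion) applies because its extra hypothesis---that every $f \in \cF_n$ has $f'', \dots, f^{(2n-2)} \in L^2[(a,b),dx]$---is exactly what we assume here; it yields $\cF_n \subseteq \cC_n \subseteq \cB_n$. Concatenating these gives
\[
\cB_n \subseteq \cF_n \subseteq \cC_n \subseteq \cB_n,
\]
so each intermediate set coincides with $\cB_n$, i.e.\ $\cB_n = \cF_n = \cC_n$. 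Combined with $\cD_{\bf L}^n = \cA_n = \cB_n$, this is the full chain $\cA_n = \cB_n = \cC_n = \cF_n = \cD_{\bf L}^n$, for every $n \in \NN$.

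There is no analytic obstacle left to overcome, as the difficult comparison-test and induction arguments were already carried out in the earlier theorems; the only point demanding care is the bookkeeping of hypotheses. In particular I would check that Theorem~\ref{t-bd} consumes the assumption $\cA_n = \cB_n$ while the Jacobi/Laguerre theorem independently consumes the $L^2$-regularity assumption on the derivatives of $f$, so that the two inclusions are logically separate and the cyclic sandwich $\cB_n \subseteq \cF_n \subseteq \cC_n \subseteq \cB_n$ closes without any circular dependence.
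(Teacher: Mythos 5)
Your proposal is correct and matches the paper's (implicit) argument: the paper states this theorem as the culmination of the section, with the proof being exactly the assembly you describe --- $\cD_{\bf L}^n=\cA_n=\cB_n$ by hypothesis, $\cB_n\subseteq\cF_n$ from Theorem \ref{t-bd}, and $\cF_n\subseteq\cC_n\subseteq\cB_n$ from the preceding Jacobi/Laguerre theorem, closing the cycle. Your bookkeeping of which hypothesis feeds which component result is exactly right, so nothing is missing.
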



\end{document}